\newtheorem{theorem}{Theorem}
\newtheorem{lemma}{Lemma}
\newtheorem{proposition}{Proposition}
\newtheorem{corollary}{Corollary}
\newtheorem{remark}{Remark}
\begin{document}
\title[]{$J$-embeddable reducible surfaces (enlarged version)}
\author{Alberto Alzati}
\address{Dipartimento di Matematica Univ. di Milano\\
via C.\ Saldini 50 20133-Milano (Italy)}
\email{alberto.alzati@unimi.it}
\author{Edoardo Ballico}
\address{Dipartimento di Matematica univ. di Trento\\
via Sommarive 14 38123-Povo (TN) (Italy) }
\email{ballico@science.unitn.it}
\thanks{}
\date{December, 18 2009}
\subjclass{Primary 14J25; Secondary 14N20}
\keywords{reducible surfaces, projectability.}
\maketitle

\begin{abstract}
We give a complete classificaton of J-embeddable surfaces, i.e. surfaces
whose secant varieties have dimension at most $4.$
\end{abstract}

\section{Introduction}

Let $\Bbb{P}^{n}$ be the $n$-dimensional complex projective space. In this
paper a variety will be always a non degenerate, reduced subvariety of $\Bbb{%
P}^{n},$ of pure dimension. Surfaces and curves will be subvarieties of
dimension $2$ or $1$, respectively.

In \cite{j} the author introduces the definition of $J$-embedding: for any
subvariety $V\subset $ $\Bbb{P}^{n}$ and for any $\lambda $-dimensional
linear subspace $\Lambda \subset \Bbb{P}^{n}$ we say that $V$ projects
isomorphically to $\Lambda $ if there exists a linear projection $\pi _{%
\mathcal{L}}:\Bbb{P}^{n}--->\Lambda ,$ from a suitable $(n-\lambda -1)$
-dimensional linear space $\mathcal{L}$, disjoint from $V$, such that $\pi _{%
\mathcal{L}}(V)$ is isomorphic to $V.$ We say that $\pi _{\mathcal{L}|V}$ is
a $J$-embedding of $V$ if $\pi _{\mathcal{L}|V}$ is injective and the
differential of $\pi _{\mathcal{L}|V}$ is finite-to one (see \cite{j}, 1.2).

In this paper we want to give a complete classification of $J$-embeddable
surfaces. More precisely we prove (see Lemma \ref{lemVeronese} and
Proposition \ref{primo passo}) the following:

\begin{theorem}
\label{teoint} Let $V$ be a non degenerate, surface in $\Bbb{P}^{n}$, $n\geq %
5.$ Assume that for a generic $4$-dimensional linear subspace $\Lambda
\subset \Bbb{P}^{n}$ the linear projection $\pi _{\mathcal{L}}:\Bbb{P}%
^{n}--->\Lambda $ is such that $\pi _{\mathcal{L}|V}$ is a $J$-embedding of $%
V,$ and that $V$ has at most two irreducible components, then $V$ is in the
following list:

$1)$ $V$ is the Veronese surface in $\Bbb{P}^{5};$

$2)$ $V$ is an irreducible cone;

$3)$ $V$ is the union of a Veronese surface in $\Bbb{P}^{5}$ and a tangent
plane to it;

$4)$ $V$ is the union of two cones having the same vertex;

$5)$ $V$ is the union of a cone with vertex a point $P$ and a plane passing
though $P;$

$6)$ $V$ is the union of :

- an irreducible surface $S,$ such that the dimension of its linear span $%
\left\langle S\right\rangle $ is $4$ and $S$ is contained in a $3$
-dimensional cone having a line $l$ as vertex,

- a plane cutting $\left\langle S\right\rangle $ along $l.$
\end{theorem}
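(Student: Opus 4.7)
The strategy is to reformulate the $J$-embedding hypothesis as a dimension bound on the secant variety $S(V)$, apply Severi's classical theorem on $1$-defective surfaces to each irreducible component, and then analyse the join of the two components via Terracini's lemma. For a generic $4$-dimensional $\Lambda\subset\mathbb{P}^n$ the centre $\mathcal{L}$ has dimension $n-5$, and $\pi_{\mathcal{L}|V}$ is a $J$-embedding precisely when $\mathcal{L}$ avoids both the secant and the tangential variety; since the latter lies inside $S(V)$, the requirement is just $\dim S(V)\le 4$. Writing $V=V_1\cup V_2$ (with $V_1=V_2$ in the irreducible case), the decomposition
\[ S(V)=S(V_1)\cup S(V_2)\cup J(V_1,V_2) \]
forces $\dim S(V_i)\le 4$ for each $i$ and $\dim J(V_1,V_2)\le 4$, i.e.\ the join is $1$-defective.

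If $V$ is irreducible, Severi's theorem on $1$-defective non-degenerate surfaces in $\mathbb{P}^n$ with $n\ge 5$ immediately delivers the Veronese surface in $\mathbb{P}^5$ or an irreducible cone, giving cases $(1)$ and $(2)$. I therefore assume $V$ reducible with two irreducible components; Severi's theorem constrains any component whose span is at least $\mathbb{P}^5$ to be a cone or a Veronese, while a component with smaller span (in particular one spanning a $\mathbb{P}^4$) is left free to be an arbitrary surface, since for it $\dim S(V_i)\le 4$ holds automatically.

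The reducible case then reduces to classifying the pairs $(V_1,V_2)$ for which the join stays $1$-defective. By Terracini's lemma applied to the join, for generic $x_i\in V_i$ the tangent planes $T_{x_1}V_1$ and $T_{x_2}V_2$ must span only a $\mathbb{P}^4$ instead of the expected $\mathbb{P}^5$, and therefore must meet. I would then run through the possibilities: two cones lead to cases $(4)$ and $(5)$, according to whether their vertices coincide or one component is a plane containing the vertex of the other; a Veronese surface together with a plane forces, via Lemma~\ref{lemVeronese}, the plane to be a tangent plane, producing case $(3)$; and in the remaining setting — a plane $\Pi$ together with an irreducible surface $S$ with $\langle S\rangle=\mathbb{P}^4$, where the Severi constraint on $S$ is vacuous — the Terracini intersection condition forces $S$ to lie on a $3$-dimensional cone with vertex a line $l$ and $\Pi$ to meet $\langle S\rangle$ exactly along $l$, yielding case $(6)$. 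This subdivision is the content of Proposition~\ref{primo passo}.

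The main obstacle is the Terracini bookkeeping needed to exclude the apparently competing configurations, such as two Veronese surfaces, a Veronese plus a non-tangent plane, two cones with unrelated vertices, or a plane together with a cone whose vertex is off the plane: each of these must be shown to produce a join of full expected dimension $5$. For cones the ruling out is a short count exploiting the fact that the tangent space at a smooth point of a cone contains the vertex, so that two tangent planes meet iff the vertices and the tangent directions align appropriately; for the Veronese surface one uses the explicit description of $T_x v_2(\mathbb{P}^2)$ to pin down exactly when a Veronese tangent plane can meet a prescribed second plane, which is where the tangent-plane characterisation in case $(3)$ comes from. Combining these local computations with the non-degeneracy of $V$ in $\mathbb{P}^n$ leaves only the six families of the statement.
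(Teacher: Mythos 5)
Your overall route is the same as the paper's: translate the hypothesis into $\dim [Sec(V)]\leq 4$ (the paper's Corollary \ref{corjoin}, via Zak), apply the Dale/Severi classification of irreducible surfaces with small secant variety to each component (the paper's Lemma \ref{lemVeronese}), and then settle the reducible case by a Terracini analysis of the join of the two components (the paper's Proposition \ref{primo passo}). Cases $(1)$--$(2)$ and the general skeleton are fine.

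The gap is in the case enumeration for $V=V_1\cup V_2$. Your list of configurations to analyse (two cones; Veronese plus plane; plane plus a surface spanning a $\Bbb{P}^{4}$) and your list of configurations to exclude omit two genuine possibilities. First, both components could be non-cones, non-Veronese and non-planes, each spanning a $\Bbb{P}^{4}$, with the two spans meeting along a $\Bbb{P}^{3}$ so that $\left\langle V\right\rangle \simeq \Bbb{P}^{5}$; here Terracini only says the generic tangent planes meet inside that $\Bbb{P}^{3}$, and ruling this out (i.e. showing both surfaces would have to be cones with a common vertex) is a real argument -- the paper's Lemma \ref{lemJ3}, which needs the quadric swept out by the trace lines $T_{P}(A)\cap L$, the developable/dual-variety dichotomy, etc. Second, a cone paired with an irreducible surface that is neither a cone nor a plane is not in your lists; excluding it rests on the join formula $\dim ([A;B])=1+\dim ([C;B])$ of Lemma \ref{lemovvio}, whose exceptional cases themselves require proof. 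Finally, for the Veronese component your argument only treats the second component being a plane, whereas one must also handle a non-plane $B$ (the paper reduces to the generic tangent plane of $B$ via dual varieties in Proposition \ref{propYB}), and the derivation of case $(6)$ needs the projection-from-$l$ argument showing $S$ lies on a $3$-dimensional cone with vertex the line $l=B\cap \left\langle S\right\rangle$, after first excluding $\dim(\left\langle S\right\rangle)=3$. Since these omitted steps are exactly where the substance of the classification lies, the proposal as written is a correct plan but not yet a proof.
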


Note that $6)$ is a particular case of Example \ref{e1bis}.

For $J$-embeddable surfaces having at least three irreducible components we
also get a reasonable classification, by distinguishing: the case in which
there exists at least a component having a linear span of dimension at least 
$5,$ see Corollary \ref{cor3span5}; the case in which there exists at least
a component with a $4$-dimensional span, see Theorems \ref{teocon4} and \ref
{teocon4nuovo}; the case in which all components have a span of dimension at
most $3,$ and there exists a pair spanning a linear space of dimension at
least $5,$ see Theorem \ref{teolungo}; the case in which all components have
a span of dimension at most $3$ and every pair has a span of dimension at
most $4,$ see Theorem \ref{teosolo3}.

\section{Notation-Definitions}

If $M\subset \Bbb{P}^{n}$ is any scheme, $M\simeq \Bbb{P}^{k}$ means that $M$
is a $k$-dimensional linear subspace of $\Bbb{P}^{n}.$

$V_{reg}:=$ subset of $V$ consisting of smooth points.

$\left\langle V_{1}\cup ...\cup V_{r}\right\rangle :=$ linear span in $\Bbb{P%
}^{n}$ of the subvarieties $V_{i}\subset \Bbb{P}^{n},$ $i=1,...,r.$

$Sec(V):=\overline{\{\bigcup\limits_{v_{1}\neq v_{2}\in V}\left\langle
v_{1}\cup v_{2}\right\rangle \}}\subset \Bbb{P}^{n}$ for any irreducible
subvariety $V\subset \Bbb{P}^{n}.$

$[V;W]:=\overline{\{\bigcup\limits_{v\in V,w\in W,v\neq w}\left\langle v\cup
w\right\rangle \}}\subset \Bbb{P}^{N}$ for any pair of distinct irreducible
subvarieties $V,W\subset \Bbb{P}^{n}.$

In case $V=W,$ $[V;V]=Sec(V).$ In case $V=W$ is a unique point $P$ we put $%
[V;W]=P$.

In case $V$ is reducible, $V=V_{1}\cup ...\cup V_{r},$ $Sec(V)=\{\bigcup%
\limits_{i=1}^{r}\bigcup\limits_{j=1}^{r}[V_{i};V_{j}]\}.$

In case $V$ and $W$ are reducible, without common components, $V=V_{1}\cup
...\cup V_{r},$ $W=W_{1}\cup ...\cup W_{s},$ we put $[V;W]:=\bigcup%
\limits_{i=1}^{r}\bigcup\limits_{j=1}^{s}[V_{i};W_{j}]$ (with the reduced
scheme structure).

$T_{P}(V):$ $=$ embedded tangent space at a smooth point $P$ of $V.$

$\mathcal{T}_{v}(V):$ $=$ tangent star to $V$ at $v:$ it is the union of all
lines $l$ in $\Bbb{P}^{n}$ passing through $v$ such that there exist at
least a line $\left\langle v^{\prime }\cup v^{\prime \prime }\right\rangle
\rightarrow l$ when $v^{\prime },v^{\prime \prime }\rightarrow v$ with $%
v^{\prime },v^{\prime \prime }\in V.$ (see \cite{j} page. 54).

$Vert(V):=\{P\in V|$ $[P;V]=V\}.$

Let us recall that $Vert(V)$ is always a linear space, moreover\newline
$Vert(V)=\bigcap\limits_{P\in V}(T_{P}(V))$, (see \cite{a2}, page. 17).

We say that $V$ is a cone of vertex $Vert(V)$ if and only if $V$ is not a
linear space and $Vert(V)\neq \emptyset .$ If $V$ is a cone the codimension
in $V$ of $Vert(V)$ is at least two.

\begin{remark}
\label{remconi} If $V$ is an irreducible surface, not a plane, for which
there exists a linear space $L,$ such that for any generic point $P\in V,$ $%
T_{P}(V)\supseteq L,$ then $L$ is a point and $V$ is a cone over an
irreducible curve with vertex $L$ (see \cite{a2}, page. 17).
\end{remark}

Caution: in this paper we distinguish among two dimensional cones and
planes, so that a two dimensional cone will have a well determined point as
vertex.

For any subvariety $V\subset \Bbb{P}^{n}$ let us denote by

\begin{center}
$V^{*}:=\overline{\{H\in \Bbb{P}^{n*}|\text{ }H\supseteq T_{P}(V)\text{ for
some point }P\in V_{reg}\}}$
\end{center}

the dual variety of $V,$ where $\Bbb{P}^{n*}$ is the dual projective space
of $\Bbb{P}^{n}$ and $H$ is a generic hyperplane of $\Bbb{P}^{n}$. Let us
recall that $(V^{*})^{*}=V$.

\section{Background material}

In this section we collect some easy remark about the previous definitions
and some known results which will be useful in the sequel.

\begin{proposition}
\label{Jem} Let $V$ be any subvariety of $\Bbb{P}^{n}$ and let $P$ be a
generic point of $\Bbb{P}^{n}.$ If $P\notin [V;V]$ then $\pi _{P|V}$ is a $J$%
-embedding of $V$.
\end{proposition}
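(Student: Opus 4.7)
My plan is to prove the contrapositive: if $\pi_{P|V}$ is not a $J$-embedding, then $P\in [V;V]$. By the definition of $J$-embedding recalled in the introduction, $\pi_{P|V}$ fails to be a $J$-embedding precisely when either it is not injective on $V$, or its differential is not finite-to-one at some point $v\in V$. I split the argument into these two cases.

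First, suppose $\pi_{P|V}$ is not injective, so there exist distinct $v_{1},v_{2}\in V$ with $\pi_{P}(v_{1})=\pi_{P}(v_{2})$. This is equivalent to saying that $P$, $v_{1}$ and $v_{2}$ are collinear, so $P$ lies on the secant line $\left\langle v_{1}\cup v_{2}\right\rangle$. By the definitions in Section 2, such a line is contained in $[V;V]$ (both in the irreducible case, where $[V;V]=Sec(V)$, and in the reducible case, where $[V;V]=\bigcup_{i,j}[V_{i};V_{j}]$ contains every secant line of $V$). Hence $P\in [V;V]$.

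Second, suppose the differential of $\pi_{P|V}$ fails to be finite-to-one at some $v\in V$. By the characterization of $J$-embedding in \cite{j}, this happens exactly when $P$ lies on some line of the tangent star $\mathcal{T}_{v}(V)$. But $\mathcal{T}_{v}(V)$ is by definition the union of lines $\ell$ through $v$ that arise as limits of honest secant lines $\left\langle v'\cup v''\right\rangle$ with $v',v''\in V$, $v'\neq v''$ and $v',v''\to v$. Such limits belong to the closure $\overline{\bigcup_{v'\neq v''}\left\langle v'\cup v''\right\rangle}$, which coincides with $[V;V]$. Hence $P\in [V;V]$ in this case as well.

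The main obstacle, if any, is purely bookkeeping in the second step: the differential condition at a singular point must be controlled by the tangent star rather than by the embedded tangent space, and for reducible $V$ one also has to account for the cross terms $[V_{i};V_{j}]$ with $i\neq j$. Both phenomena are, however, already subsumed by the closure in the definition of $[V;V]$ recalled in Section 2, so once the equivalence with membership in $\mathcal{T}_{v}(V)$ is imported from \cite{j} the proof reduces to the elementary containments above. The word ``generic'' in the statement only serves to ensure that the hypothesis $P\notin [V;V]$ is not vacuous, which is automatic whenever $[V;V]\neq \Bbb{P}^{n}$.
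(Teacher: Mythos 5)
Your argument is correct, but it takes a different route from the paper: the paper disposes of this proposition by a bare citation to Proposition 1.5 c) of \cite{z} (Chapter II), whereas you unpack the two possible failure modes of a $J$-embedding and reduce each to the elementary containment of the relevant lines in $[V;V]$. Your first case (non-injectivity forces $P$ onto an honest secant line, which lies in $[V;V]$ even in the reducible situation, where the cross joins $[V_{i};V_{j}]$ are part of the definition) is complete as stated, and your observation that $P\notin[V;V]$ also guarantees $P\notin V$, so the projection is defined on $V$, is implicit and harmless. In the second case the real content is the imported equivalence from \cite{j}: that the differential condition fails at $v$ exactly when $P$ lies on the tangent star $\mathcal{T}_{v}(V)$. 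That equivalence is essentially the substance of the Zak/Johnson result the paper cites, so your proof is not more elementary at its core -- it relocates the black box from ``projection from $P\notin Sec(V)$ is a $J$-embedding'' to ``ramification of $\pi_{P}$ is detected by the tangent star'' -- but once that is granted, your closure argument (tangent-star lines are limits of secant lines, hence lie in the closed set $[V;V]=\bigcup_{i,j}[V_{i};V_{j}]$) correctly finishes the contrapositive. What your version buys is transparency about why $[V;V]$, rather than the smaller union of honest secant lines, is the right locus to avoid, and why genericity of $P$ plays no logical role in the implication; what the paper's version buys is brevity and a single reference covering both failure modes at once.
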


\begin{proof}
See Proposition 1.5 c) of \cite{z}, chapter II, page 37.
\end{proof}

\begin{corollary}
\label{corjoin} Let $V$ be any surface of $\Bbb{P}^{n},$ $n\geq 5,$ and let $%
\Lambda $ be a generic $4$-dimensional linear space of $\Bbb{P}^{n}.$ There
exists a $J$-embedding $\pi _{P|V}$ for $V,$ from a suitable $(n-5)$%
-dimensional linear space of $\Bbb{P}^{n}$ into $\Lambda \simeq \Bbb{P}^{4}$
, if and only if\newline
$\dim [Sec(V)]\leq 4.$
\end{corollary}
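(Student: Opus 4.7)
The plan is to recognize that $\pi_{\mathcal{L}|V}$ is a $J$-embedding if and only if $\mathcal{L}\cap Sec(V)=\emptyset$, and then to finish by a dimension count on $(n-5)$-planes. Indeed $Sec(V)$ is defined as a closure, so it contains both proper chords of $V$ (whose meeting $\mathcal{L}$ would collapse two distinct points of $V$ and destroy injectivity) and the limit secants, that is tangent lines and tangent stars (whose meeting $\mathcal{L}$ would force $d\pi_{\mathcal{L}}$ to fail the finite-to-one condition); these two exceptional loci together fill $Sec(V)$.

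For the ``if'' direction, assume $\dim Sec(V)\le 4$. I would express a generic $(n-5)$-plane $\mathcal{L}\subset\mathbb{P}^n$ disjoint from $\Lambda$ as $\mathcal{L}=\langle P_1,\dots,P_{n-4}\rangle$ for sufficiently general points and iterate Proposition~\ref{Jem}. Putting $V_0:=V$ and $V_k:=\pi_{P_k}(V_{k-1})\subset\mathbb{P}^{n-k}$, the condition $P_k\notin Sec(V_{k-1})$ (ensured by genericity, since $\dim Sec(V_{k-1})\le 4<n-k+1$) implies both that $\pi_{P_k|V_{k-1}}$ is a $J$-embedding and that $Sec(V_k)\subseteq\pi_{P_k}(Sec(V_{k-1}))$, keeping the bound $\dim Sec(V_k)\le 4$ through the iteration. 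The composition of the $n-4$ one-point $J$-embeddings is the sought $J$-embedding $\pi_{\mathcal{L}|V}$ into $\Lambda\simeq\mathbb{P}^4$.

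For the ``only if'' direction, suppose $\dim Sec(V)\ge 5$. Any $(n-5)$-dimensional $\mathcal{L}\subset\mathbb{P}^n$ then satisfies $\dim\mathcal{L}+\dim Sec(V)\ge n$, so by the classical projective intersection theorem $\mathcal{L}\cap Sec(V)\neq\emptyset$. A point $Q$ in this intersection lies either on a proper chord $\langle v_1,v_2\rangle$ of $V$, so that $\pi_{\mathcal{L}}(v_1)=\pi_{\mathcal{L}}(v_2)$ breaks injectivity, or on a tangent star $\mathcal{T}_v(V)$, so that $d(\pi_{\mathcal{L}})_v$ fails the finite-to-one condition; in either case $\pi_{\mathcal{L}|V}$ is not a $J$-embedding, so no suitable center exists.

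The main obstacle is the ``if'' direction, specifically the need to simultaneously preserve the dimension bound $\dim Sec(V_k)\le 4$ through the iteration and to verify that each one-step projection is a $J$-embedding; both are handled by ensuring $P_k\notin Sec(V_{k-1})$ at every stage, which is a generic condition once $\dim Sec(V_{k-1})\le 4$ and which turns the argument into a clean induction anchored on Proposition~\ref{Jem}.
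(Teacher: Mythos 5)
Your proof is correct and follows essentially the same route as the paper, whose entire proof is the one-line citation ``Apply Proposition \ref{Jem}; see also Theorem 1.13 c) of \cite{z}'': you simply unwind that citation, getting the ``if'' direction by iterating the one-point Proposition \ref{Jem} with generic centers (legitimate, since the secant variety of each successive image keeps dimension at most $4$ and the composite is a projection from an $(n-5)$-plane disjoint from $V$), and the ``only if'' direction from the dimension count plus the standard dichotomy that every point of $Sec(V)$ lies on a chord or in a tangent star. The facts you assert without proof (that chords and tangent stars fill $Sec(V)$, and that a tangent star lies in the embedded tangent space so a center meeting it destroys injectivity or the finiteness of the differential) are precisely the content the paper delegates to Zak's Theorem 1.13, so nothing essential is missing.
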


\begin{proof}
Apply Proposition \ref{Jem}. See also Theorem 1.13 c) of \cite{z}, chapter
II, page 40.
\end{proof}

\begin{corollary}
\label{corovvio} Let $V=V_{1}\cup ...\cup V_{r}$ be a reducible surface in $%
\Bbb{P}^{n}$, $n\geq 5,$ and let $\Lambda $ be a generic $4$-dimensional
linear space of $\Bbb{P}^{n}.$ There exists a $J$-embedding $\pi _{P|V}$ for 
$V,$ from a suitable $(n-5)$-dimensional linear space of $\Bbb{P}^{n}$ into $%
\Lambda \simeq \Bbb{P}^{4}$, if and only if $\dim ([V_{i};V_{j}])\leq 4$ for
all $i,j=1,...,r,$ including cases $i=j.$
\end{corollary}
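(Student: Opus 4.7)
The plan is to reduce this essentially to Corollary \ref{corjoin} together with the definition of $Sec(V)$ for a reducible variety given in the Notation section. Corollary \ref{corjoin} was stated for ``any surface $V$'', and since the paper's convention allows varieties to be reduced but not necessarily irreducible, I would first observe that its proof (via Proposition \ref{Jem}) applies verbatim to the reducible $V=V_1\cup\cdots\cup V_r$, so that the existence of a $J$-embedding $\pi_{P|V}$ into a generic $\Lambda\simeq\mathbb{P}^4$ is equivalent to $\dim[Sec(V)]\leq 4$.

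Next I would unpack $Sec(V)$ using the definition recalled in the Notation section: for reducible $V$ one has
\[
Sec(V)\;=\;\bigcup_{i=1}^{r}\bigcup_{j=1}^{r}[V_i;V_j],
\]
a finite union (with the convention $[V_i;V_i]=Sec(V_i)$). Since dimension of a finite union of closed subvarieties equals the maximum of the dimensions of its members, we get
\[
\dim[Sec(V)]\;=\;\max_{1\le i,j\le r}\dim[V_i;V_j].
\]

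Combining these two facts yields the stated equivalence: the $J$-embedding exists if and only if the maximum on the right is at most $4$, i.e.\ $\dim[V_i;V_j]\leq 4$ for all pairs $i,j$ including $i=j$ (the diagonal case reduces to $\dim Sec(V_i)\leq 4$). There is no real obstacle here; the only point to be careful about is that the irreducible version, Corollary \ref{corjoin}, is genuinely applicable to a reduced but reducible $V$, which is immediate from Proposition \ref{Jem} since the latter is stated for arbitrary subvarieties $V$ and genericity of $P\in\mathbb{P}^n$ outside $[V;V]=Sec(V)$ is what is actually used.
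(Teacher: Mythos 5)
Your argument is exactly the paper's: apply Corollary \ref{corjoin} to the (reduced, possibly reducible) surface $V$ and unpack the definition $Sec(V)=\bigcup_{i,j}[V_i;V_j]$, using that the dimension of a finite union is the maximum of the dimensions. The extra care you take in checking that Corollary \ref{corjoin} (via Proposition \ref{Jem}) applies to reducible $V$ is a correct and harmless elaboration of the same one-line proof.
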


\begin{proof}
Look at the definition of $Sec(V)$ and apply Corollary \ref{corjoin}.
\end{proof}

\begin{lemma}
\label{lemJ} For any pair of distinct irreducible subvarieties $V,W\subset 
\Bbb{P}^{n}:$

$1)$ if $V$ and $W$ are linear spaces $[V;W]=$ $\left\langle
V,W\right\rangle ;$

$2)$ if $V$ is a linear space$,$ $[V;W]$ is a cone, having $V$ as vertex;

$3)$ $\left\langle [V;W]\right\rangle $ $=\left\langle \left\langle
V\right\rangle \cup \left\langle W\right\rangle \right\rangle ;$

$4)$ $\left\langle V\right\rangle =\left\langle \bigcup\limits_{P\in
V}T_{P}(V)\right\rangle ,$ $P$ generic point of $V;$

$5)$ $[V;[W;U]]=[[V;W];U]=\overline{\{\bigcup\limits_{v\in V,w\in W,u\in
U,v\neq w,v\neq u,u\neq w}\left\langle v\cup w\cup u\right\rangle \}}$, for
any other irreducible subvariety $U$ distinct from $V$ and $W.$
\end{lemma}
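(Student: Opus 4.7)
The plan is to prove the five parts in order, each as a direct consequence of the definition of the join; the only subtlety throughout is the careful handling of the closure and the exclusion $v\neq w$ in chords. No deep input is needed beyond the linearity hypotheses in (1), (2) and the density of $V_{reg}$ in (4), and parts (3) and (5) tacitly use that a chord lies in the linear span of its endpoints.

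For (1), every chord $\langle v,w\rangle$ with $v\in V$, $w\in W$ sits in $\langle V,W\rangle$, giving $[V;W]\subseteq \langle V,W\rangle$; conversely, a generic point of $\langle V,W\rangle$ is an affine combination $\alpha v+\beta w$ with $v\neq w$, hence on a chord. For (2), fix $v\in V$ and a generic $q\in [V;W]$ on a chord $\langle v',w'\rangle$, so $q=\lambda v'+\mu w'$; then any point of $\langle v,q\rangle$ equals $(\alpha+\beta\lambda)v''+\beta\mu w'$ with $v'':=(\alpha v+\beta\lambda v')/(\alpha+\beta\lambda)\in V$ by linearity, hence lies on the chord $\langle v'',w'\rangle$. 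By closure of $[V;W]$, the line $\langle v,q\rangle$ lies in $[V;W]$ for every $q\in [V;W]$, so $V\subseteq Vert([V;W])$. For (3), every chord lies in $\langle V\cup W\rangle$, giving one inclusion; conversely, $V$ and $W$ sit in $[V;W]$ (approximate $v\in V$ by chords $\langle v_n,w_0\rangle$ with $v_n\to v$, $v_n\neq w_0$), giving the reverse.

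For (4), $T_P(V)\subseteq \langle V\rangle$ at a smooth $P$ since the embedded tangent space is a limit of chords, each contained in $\langle V\rangle$; conversely, $P\in T_P(V)$ and $V_{reg}$ is dense in $V$, so $\langle V\rangle=\langle V_{reg}\rangle\subseteq \langle \bigcup_P T_P(V)\rangle$. For (5), a generic point of $[V;[W;U]]$ lies on a chord $\langle v,q\rangle$ with $q$ on a chord $\langle w,u\rangle$, hence in the plane $\langle v,w,u\rangle$; conversely every such plane is swept by these nested chords. By the symmetric description, $[[V;W];U]$ and $[V;[W;U]]$ both equal the closure of the union of planes $\langle v,w,u\rangle$, proving associativity. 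The main, admittedly minor, obstacle across all five parts is verifying that the generic-position arguments survive passage to closure and the $v\neq w$ exclusion in the chord definition; in each case this is handled by producing chords parametrised over a dense open set and invoking the closure built into the definition of $[-;-]$.
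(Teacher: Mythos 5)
Your proof is correct and follows the same route as the paper, which simply declares all five parts immediate consequences of the definitions of $[V;W]$ and $\left\langle V\right\rangle$; you have merely written out the routine chord, closure, and genericity details that the authors leave implicit.
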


\begin{proof}
Immediate consequences of the definitions of $[V;W]$ and $\left\langle
V\right\rangle .$
\end{proof}

Let us recall the Terracini's lemma:

\begin{lemma}
\label{lemT} Let us consider a pair of irreducible subvarieties $V,W\subset 
\Bbb{P}^{n}$ and a generic point $R\in [V;W]$ such that $R\in \left\langle
P\cup Q\right\rangle ,$ with $P\in V$ and $Q\in W;$ then $T_{R}([V;W])=$ $%
\left\langle T_{P}(V)\cup T_{Q}(W)\right\rangle $ and $\dim ([V;W])=\dim
(\left\langle T_{P}(V)\cup T_{Q}(W)\right\rangle ).$
\end{lemma}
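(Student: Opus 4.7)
The plan is to realize $[V;W]$ as the image of a natural morphism from the product of the affine cones over $V$ and $W$, compute the differential at a generic smooth preimage of $R$, and read off the tangent space. This is the classical affine-cone proof of Terracini's lemma; it lets us avoid introducing the abstract join as a separate construction.

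First I would pass to affine cones: let $\widehat{V},\widehat{W}\subset \c^{n+1}$ be the affine cones over $V$ and $W$, and consider the morphism $\sigma:\widehat{V}\times \widehat{W}\to \c^{n+1}$ defined by $\sigma(v,w)=v+w$. Since every point of the line $\left\langle P\cup Q\right\rangle$ is of the form $[sv+tw]$ and the scalars $s,t$ can be absorbed into the representatives (the cones are closed under rescaling), the image of $\sigma$ is exactly the affine cone $\widehat{[V;W]}$. Since we are in characteristic zero and $\widehat{V}\times \widehat{W}$ is reduced and irreducible, generic smoothness applies: for a generic $r\in \widehat{[V;W]}$ there is a preimage $(v_{0},w_{0})$ at which $\sigma$ is smooth, hence $d\sigma _{(v_{0},w_{0})}$ surjects onto the affine tangent space at $r$.

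Next I would compute $d\sigma $ directly. It is the linear map $(\dot{v},\dot{w})\mapsto \dot{v}+\dot{w}$ on $T_{v_{0}}\widehat{V}\oplus T_{w_{0}}\widehat{W}$, so its image equals the linear subspace $T_{v_{0}}\widehat{V}+T_{w_{0}}\widehat{W}\subset \c^{n+1}$. By genericity $P:=[v_{0}]\in V_{reg}$ and $Q:=[w_{0}]\in W_{reg}$, so the affine tangent spaces $T_{v_{0}}\widehat{V}$, $T_{w_{0}}\widehat{W}$ projectivize exactly to the embedded tangent spaces $T_{P}(V)$ and $T_{Q}(W)$ (each of which already contains $P$ respectively $Q$, which is what allows the two affine subspaces to add up projectively to the join of $T_P(V)$ and $T_Q(W)$ rather than just a smaller linear space). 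Projectivizing the equality of images of differentials therefore gives $T_{R}([V;W])=\left\langle T_{P}(V)\cup T_{Q}(W)\right\rangle$, and the dimension statement follows immediately.

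The main point that needs some care is the generic-smoothness step: one must argue that a generic $R\in [V;W]$ has at least one preimage $(v_{0},w_{0})$ at which $P,Q$ are smooth points of $V,W$ and the fiber of $\sigma $ is reduced. This is standard in characteristic zero, but it is the only non-formal ingredient; once granted, the identification of $T_{R}([V;W])$ is a one-line application of the chain rule. The fact that the statement applies to an arbitrary $(P,Q)$ with $R\in \left\langle P\cup Q\right\rangle $, rather than just a single distinguished one, then follows because for generic $R$ every pair in $\sigma ^{-1}(\lambda r)$, $\lambda \in \c^{\ast }$, consists of smooth-point representatives.
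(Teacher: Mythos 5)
Your argument is correct in substance, but it is not the paper's route: the paper gives no proof of this lemma at all, it simply cites Corollary 1.11 of Adlandsvik's paper [A1] on joins and higher secant varieties, so what you have written is the classical self-contained proof of Terracini's lemma (affine cones, the addition map $\sigma (v,w)=v+w$, generic smoothness in characteristic zero) rather than a reconstruction of the reference. Three points to tighten. First, the image of $\sigma $ is only dense in the affine cone over $[V;W]$, since the join is defined as a closure; so phrase the generic-smoothness step for the dominant morphism from the smooth locus of $\widehat{V}\times \widehat{W}$ onto that cone. Second, the clean two-sided structure is: at every pair of representatives of smooth points $P\in V_{reg}$, $Q\in W_{reg}$ the image of $d\sigma _{(v_{0},w_{0})}$, namely $T_{v_{0}}\widehat{V}+T_{w_{0}}\widehat{W}$, is contained in the tangent space of the join cone at $r$, while generic smoothness gives the reverse surjectivity at all such pairs lying over a suitable dense open subset of the join; together these yield the stated equality for \emph{every} pair of smooth points $(P,Q)$ with $R\in \left\langle P\cup Q\right\rangle $, which is exactly the scope of the lemma, since the paper defines $T_{P}(V)$ only at smooth points. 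Third, your closing claim that for generic $R$ every preimage pair consists of smooth-point representatives is both unnecessary and false in degenerate situations: if $V$ is singular at a point $P_{0}$ and $[V;W]=[P_{0};W]$ (already possible for two surfaces spanning the same $\Bbb{P}^{3}$), then the generic $R$ does lie on a line through the singular point; simply drop that sentence and argue only for smooth pairs as above. With these adjustments the proof is complete; the citation buys the paper brevity, while your argument buys self-containedness and makes explicit that characteristic zero, via generic smoothness, is precisely where the genericity of $R$ is used.
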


\begin{proof}
See Corollary 1.11 of \cite{a1}.
\end{proof}

The following lemmas consider the join of two irreducible varieties of low
dimensions.

\begin{lemma}
\label{lemcurve} Let $C,$ $C^{\prime }$ be irreducible distinct curves in $%
\Bbb{P}^{n},$ $n\geq 2$, then $\dim ([C;C^{\prime }])=3$ unless $C$ and $%
C^{\prime }$ are plane curves, lying on the same plane, in this case\newline
$\dim ([C;C^{\prime }])=2$.
\end{lemma}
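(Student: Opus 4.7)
My plan is to apply Terracini's lemma (Lemma~\ref{lemT}): for a generic $R \in [C;C']$ lying on a chord $\langle P \cup Q \rangle$ with $P \in C_{reg}$ and $Q \in C'_{reg}$, one has $\dim [C;C'] = \dim \langle T_P(C) \cup T_Q(C') \rangle$. Since $T_P(C)$ and $T_Q(C')$ are both lines in $\mathbb{P}^n$, their span has dimension in $\{1,2,3\}$; dimension $1$ would force $T_P(C) = T_Q(C')$ for generic $P,Q$, so that $C$ and $C'$ would both coincide with this common line, contradicting $C\neq C'$. Hence $\dim [C;C'] \in \{2,3\}$, and the analysis reduces to whether two generic tangents of the two curves are skew or coplanar.

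If a generic pair $(T_P(C), T_Q(C'))$ is skew, then $\dim[C;C'] = 3$ and we are done. Otherwise assume $T_P(C) \cap T_Q(C') \neq \emptyset$ for generic $P,Q$. Fix such a $Q$ and set $\ell := T_Q(C')$; the hypothesis says that every generic tangent to $C$ meets $\ell$. I would then show that $C$ is contained in a $2$-plane through $\ell$ by studying the linear projection $\pi_\ell : \mathbb{P}^n \dashrightarrow \mathbb{P}^{n-2}$ from the center $\ell$: at a generic $P\in C$ the line $T_P(C)$ meets $\ell$, so $T_P(C)$ and $\ell$ span at most a $\mathbb{P}^2$, and this plane collapses to a point under $\pi_\ell$. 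Consequently $d\pi_\ell$ kills $T_P(C)$, so $\pi_\ell(C)$ has zero-dimensional tangent at a generic point and must be a single point; the fiber of $\pi_\ell$ over a point is a $2$-plane containing $\ell$, so $C\subset \pi_1$ for some plane $\pi_1\supset \ell$. Symmetrically, $C'\subset \pi_2$ for some plane $\pi_2$ containing a generic tangent $T_P(C)$.

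To finish, I need to identify $\pi_1$ and $\pi_2$ so that $C$ and $C'$ share a single plane. For generic $P\in C$, $T_P(C)\subset \pi_1$ trivially and $T_P(C)\subset \pi_2$ by construction, so $T_P(C)\subset \pi_1\cap \pi_2$. Either $T_P(C)$ is independent of $P$, in which case $C$ equals this line and lies in $\pi_2$ alongside $C'$; or $T_P(C)$ varies with $P$, in which case $\pi_1\cap \pi_2$ contains infinitely many distinct lines, forcing $\pi_1=\pi_2$. In either case $C\cup C'$ lies in a common plane $\pi$, so $[C;C']\subseteq \pi$ and $\dim[C;C']\leq 2$; since $[C;C']$ is irreducible and contains the two distinct irreducible curves $C$ and $C'$, it cannot itself be a curve, so $\dim[C;C']=2$.

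The main obstacle I anticipate is the geometric step claiming that if every tangent line of an irreducible curve $C$ meets a fixed line $\ell$ then $C$ lies in a plane through $\ell$; the projection-from-$\ell$ device is the natural way to make this short and rigorous, turning a statement about a $1$-parameter family of incident tangents into a zero-dimensional tangent computation for the image curve.
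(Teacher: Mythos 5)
Your argument is correct in substance, but it takes a genuinely different route from the paper: the paper disposes of this lemma by citing Corollary 1.5 of \cite{a1} (Adlandsvik's dimension formula for joins of curves, with $r=2$), whereas you give a self-contained proof from Terracini's lemma (Lemma~\ref{lemT}) plus projection from a line. What your route buys is independence from the external reference and an explicit geometric mechanism: the observation that if every generic tangent of an irreducible curve meets a fixed line $\ell$, then projection from $\ell$ kills the differential along the curve, so (in characteristic zero) the image is a point and the curve lies in a plane through $\ell$. That is exactly the right device, and your endgame (the join of two coplanar curves is the whole plane, since an irreducible join containing two distinct irreducible curves cannot be a curve) is fine.

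One step needs tightening: the plane $\pi_{2}$ is produced by projecting $C'$ from one chosen generic tangent $T_{P_{0}}(C)$, so as written it is only guaranteed to contain that single tangent, and it may a priori vary with the chosen point; the assertion ``$T_{P}(C)\subset \pi_{2}$ by construction'' for all generic $P$ is therefore not yet justified, which is precisely what your ``infinitely many lines in $\pi_{1}\cap\pi_{2}$'' step relies on. The fix is a one-line case distinction: if $C'$ is not a line, the plane containing it is unique, namely $\langle C'\rangle$, so $\pi_{2}=\langle C'\rangle$ is independent of $P$ and does contain every generic tangent of $C$, and your argument closes; if $C'$ is a line, then $\ell=T_{Q}(C')=C'$ and already $\pi_{1}\supset\ell=C'$ together with $C\subset\pi_{1}$ gives the common plane directly. (Symmetrically one may also dismiss at the outset the degenerate possibility $C=\ell$, which for generic $Q$ cannot occur unless $C=C'$.) With that patch the proof is complete.
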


\begin{proof}
The claim follows from Corollary 1.5 of \cite{a1} with $r=2.$
\end{proof}

\begin{lemma}
\label{lemcursup} Let $C$ be an irreducible curve, not a line, and let $B$
be an irreducible surface in $\Bbb{P}^{n},$ $n\geq 2$. Then:

$i)$ $\dim ([C;B])\leq 4;$

$ii)$ $\dim ([C;B])=3$ if and only if $\left\langle C\cup B\right\rangle
\simeq \Bbb{P}^{3};$

$iii)$ $\dim ([C;B])=2$ if and only if $B$ is a plane and $C\subset B.$
\end{lemma}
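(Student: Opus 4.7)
\emph{Plan.} The plan is to dispatch (i) and the easy direction of (iii), then prove (iii) in full, and finally use (iii) to prove (ii).

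For (i), Terracini's lemma (Lemma~\ref{lemT}) yields $\dim([C;B])=\dim\langle T_P(C)\cup T_Q(B)\rangle$ for generic $P\in C$ and $Q\in B$; since $T_P(C)$ is a line and $T_Q(B)$ is a plane, the span has dimension at most $4$. The ``if'' part of (iii) is immediate: when $B$ is a plane and $C\subset B$, every line joining two points of $B$ stays in $B$, so $[C;B]=B$ has dimension $2$.

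For the ``only if'' direction of (iii), assume $\dim([C;B])=2$. Since joins of irreducible varieties are irreducible and $B\subseteq [C;B]$ with both irreducible of dimension $2$, the equality $[C;B]=B$ holds. Hence for every $v\in C$ and every $w\in B\setminus\{v\}$ the line $\langle v,w\rangle$ lies in $B$, i.e.\ $v\in Vert(B)$; thus $C\subseteq Vert(B)$, which is a linear space, so $\langle C\rangle\subseteq Vert(B)$. The hypothesis that $C$ is not a line gives $\dim\langle C\rangle\geq 2=\dim B$, and the codimension-$\geq 2$ constraint on vertices of cones recalled in Section~2 forces $B$ to be a linear space, hence a plane containing $C$.

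For the ``if'' direction of (ii), if $\langle C\cup B\rangle\simeq\mathbb{P}^3$ then $[C;B]\subseteq\mathbb{P}^3$ has dimension at most $3$, and by the already-proved (iii) it cannot equal $2$ (otherwise $\langle C\cup B\rangle$ would itself be a plane), so it equals $3$. The forward direction of (ii) is the main step. Set $X:=[C;B]$, an irreducible $3$-fold. I claim $[\{v\};B]=X$ for generic $v\in C$: indeed $[\{v\};B]$ is irreducible of dimension $3$ (otherwise $v\in Vert(B)$ generically, and arguing exactly as above $B$ would be a plane containing $C$, forcing $\dim X=2$), and since it sits inside the irreducible $3$-fold $X$ the two coincide. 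Consequently $X$ is a cone with vertex $v$ for every generic $v\in C$, so $C\subseteq Vert(X)$, hence $\langle C\rangle\subseteq Vert(X)$ and $\dim Vert(X)\geq 2$. If $X$ were a cone but not a linear space, its vertex would have codimension at least $2$ in $X$, forcing $\dim Vert(X)\leq 1$---a contradiction. Thus $X$ is a $3$-dimensional linear space, and $\langle C\cup B\rangle=\langle X\rangle=X\simeq\mathbb{P}^3$. The principal obstacle is this last argument; once the equality $[\{v\};B]=X$ is in hand, the vertex/codimension principle closes everything.
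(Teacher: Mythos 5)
Your proof is correct and follows essentially the same route as the paper's: in each part you show that $Vert([C;B])$ contains a linear space that is too large (namely $\left\langle C\right\rangle$ in (ii), resp. that $B\subseteq Vert([C;B])$ in (iii)) and then conclude using that vertices are linear spaces, of codimension at least $2$ in a non-linear cone, together with the hypothesis that $C$ is not a line. The only deviation is cosmetic: where the paper invokes Proposition 1.3 of \cite{a1} for these vertex inclusions, you derive them directly from $[v;B]=[C;B]$ for generic $v\in C$ (resp. $[C;B]=B$), and you also spell out the easy ``if'' direction of (ii), which the paper leaves implicit.
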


\begin{proof}
$i)$ Obvious.

$ii)$ If $\dim ([C;B])=3=1+\dim (B),$ by Proposition 1.3 of \cite{a1}, we
have $C\subseteq Vert([C;B]).$ If $[C;B]\simeq \Bbb{P}^{3}$ then $%
\left\langle C\cup B\right\rangle \simeq \Bbb{P}^{3}$ and we are done. If
not the codimension of $Vert([C;B])$ in $[C;B]$ is at least $2$ (see \cite
{a1} page. 214). Hence $\dim \{Vert([C;B])\}\leq 1$. Hence $Vert([C;B])=C,$
but this is a contradiction as $C$ is not a line and $Vert([C;B])$ is a
linear space.

$iii)$ If $\dim ([C;B])=2=1+\dim (C)$ then Proposition 1.3 of \cite{a1}
implies $B\subseteq Vert([C;B]).$ In this case $Vert([C;B])=$ $[C;B]=B$.
Hence $B$ is a plane and necessarily $C\subset B$ by Lemma \ref{lemcurve}.
\end{proof}

\begin{lemma}
\label{lemretta} Let $B$ be an irreducible surface, and $l$ any line in $%
\Bbb{P}^{n},$ $n\geq 2$. Then:

$i)$ $\dim ([l;B])\leq 4;$

$ii)$ $\dim ([l;B])=3$ if and only if $\left\langle l\cup B\right\rangle
\simeq \Bbb{P}^{3}$ or $B$ is contained in a cone $\Xi $ having $l$ as
vertex and an irreducible curve $C$ as a basis.

$iii)$ $\dim ([l;B])=2$ if and only if $B$ is a plane and $l\subset B.$
\end{lemma}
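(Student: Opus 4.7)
The plan is to parallel the strategy of Lemma \ref{lemcursup} with Proposition 1.3 of \cite{a1} as the workhorse; the new phenomenon, compared to the curve case, is that a line can serve as the vertex of a three-dimensional cone, which is what produces the second alternative in (ii). Part (i) is the standard join bound $\dim[l;B]\le\dim l+\dim B+1=4$. For (iii), if $\dim[l;B]=2=1+\dim l$, Proposition 1.3 of \cite{a1} yields $B\subseteq Vert([l;B])$; since $[l;B]$ is irreducible, contains $B$, and has the same dimension as $B$, we get $[l;B]=B$. As $Vert$ is always a linear space, $B$ equals its own vertex and hence is a plane, and the inclusion $l\subset [l;B]=B$ follows. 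The converse of (iii) is immediate because $[l;B]=B$ whenever $B$ is a plane containing $l$.

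For the forward direction of (ii), assume $\dim[l;B]=3$. The key preliminary step is to prove $l\subseteq Vert([l;B])$. Consider, for each $P\in l$, the cone $[P;B]\subseteq [l;B]$. If $\dim[P;B]\le 2$ for every $P\in l$, irreducibility and the inclusion $B\subseteq [P;B]$ force $[P;B]=B$, so $l\subseteq Vert(B)$; but $Vert(B)$ is a linear subspace of codimension at least $2$ in $B$ whenever $B$ is not itself linear, so $B$ must be a plane containing $l$ and then $\dim[l;B]=2$, a contradiction. Hence for a generic $P\in l$ we have $\dim[P;B]=3=\dim[l;B]$, and irreducibility of both joins, together with $[P;B]\subseteq [l;B]$, gives $[P;B]=[l;B]$. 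Thus every generic $P\in l$ lies in $Vert([l;B])$, and by closure $l\subseteq Vert([l;B])$. Now split into two cases. If $[l;B]$ is a linear space it is a $\mathbb{P}^3$, and the two inclusions $[l;B]\subseteq \langle l\cup B\rangle\subseteq [l;B]$ give $\langle l\cup B\rangle\simeq \mathbb{P}^3$. Otherwise $Vert([l;B])$ has codimension at least $2$ in the three-dimensional cone $[l;B]$, so $\dim Vert([l;B])\le 1$, and combined with $l\subseteq Vert([l;B])$ this forces $Vert([l;B])=l$. In this second case $[l;B]=[l;C]=:\Xi$ for an irreducible one-dimensional basis $C$, and $B\subset [l;B]=\Xi$.

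The reverse direction of (ii) is largely bookkeeping: if $\langle l\cup B\rangle\simeq \mathbb{P}^3$ then $[l;B]\subseteq \mathbb{P}^3$ gives $\dim[l;B]\le 3$, and $\dim[l;B]\ge 3$ follows because the only way $\dim[l;B]=2$ can occur is via (iii), i.e. with $B$ a plane containing $l$, which would force $\langle l\cup B\rangle=B$ to have dimension $2$; if $B\subset \Xi=[l;C]$ then, using $l\subseteq Vert(\Xi)$, we get $[l;\Xi]=\Xi$ and hence $[l;B]\subseteq \Xi$, so $\dim[l;B]\le 3$, with equality outside the degenerate situation in which $B$ is a plane containing $l$ (already covered by (iii)). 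The main obstacle is the forward half of (ii): keeping the argument honest requires upgrading the partial information that some $[P;B]$ equals $[l;B]$ to the statement that the entire line $l$ lies in $Vert([l;B])$, and then using the codimension-$2$ property of $Vert$ in a non-linear cone to pin down $Vert([l;B])=l$ exactly.
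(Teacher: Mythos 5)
Your proof is correct, and its skeleton is the paper's: identify the vertex of the join, use that the vertex of a non-linear variety has codimension at least two, and split according to whether $[l;B]$ is a linear space. The differences are local. For the forward half of (ii) the paper gets $l\subseteq Vert([l;B])$ in one stroke from Proposition 1.3 of \cite{a1}, since $\dim ([l;B])=3=\dim (B)+1$; you instead re-derive it by showing $[P;B]=[l;B]$ for generic $P\in l$. That argument works (the bad locus $\{P\in l\mid \dim ([P;B])\le 2\}$ equals $l\cap Vert(B)$, hence is a proper linear subspace of $l$ once it is not all of $l$, and the identity $[P;[P;B]]=[P;B]$ then puts each good $P$ in $Vert([l;B])$), but it is also unnecessary: by associativity of joins (Lemma \ref{lemJ}, $5)$) one has $[l;[l;B]]=[[l;l];B]=[l;B]$, so $l\subseteq Vert([l;B])$ holds for trivial reasons, with no dimension hypothesis; the paper's appeal to Proposition 1.3 plays the same role. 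On the other hand you are more thorough than the paper on the converse implications of (ii) and (iii), which the paper leaves essentially unargued, and you correctly flag the one genuine degenerate case there: a plane $B$ containing $l$ can perfectly well lie on a three-dimensional cone with vertex $l$, yet then $\dim ([l;B])=2$, so the ``if'' direction of (ii) must exclude the situation already described by (iii), exactly as you state.
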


\begin{proof}
$i)$ Obvious.

$ii)$ If $\dim ([l;B])=3=1+\dim (B),$ by Proposition 1.3 of \cite{a1}, we
have $l\subset Vert([l;B]).$ If $[l;B]\simeq \Bbb{P}^{3}$ we have $%
\left\langle l\cup B\right\rangle \simeq \Bbb{P}^{3}$, if not the
codimension of $Vert([l;B])$ in $[l;B]$ is at least $2$ (see \cite{a1} page.
214). Hence $\dim \{Vert([l;B])\}\leq 1$, hence $Vert([l;B])=l$ and $\Xi $
is exactly $[l;B].$ Note that $\dim ([l;B])=3$ if and only if $l\cap
T_{P}(B)\neq \emptyset $ for any generic point $P\in B.$

$iii)$ If $\dim ([l;B])=2=1+\dim (l),$ by Proposition 1.3 of \cite{a1}, we
have $B\subset Vert([l;B]).$ We can argue as in the proof of Lemma \ref
{lemcursup}, $iii).$
\end{proof}

The following Lemmas consider the possible dimensions for the join of two
surfaces according to the dimension of the intersection of their linear
spans. Firstly we consider the case in which one of the two surface is a
plane.

\begin{lemma}
\label{lemJ1} Let $A$ be an irreducible, non degenerate surface in $\Bbb{P}%
^{n},$ $n\geq 3,$ and let $B$ be any fixed plane in $\Bbb{P}^{n}.$ Let $%
A^{\prime }$ be the tangent plane at a generic point of $A_{reg}.$ Then:

$i)$ $\dim ([A;B])=5$ if and only if $A^{\prime }\cap B=\emptyset ;$

$ii)$ $\dim ([A;B])=4$ if and only if $\dim (A^{\prime }\cap B)=0;$

$iii)$ $\dim ([A;B])=3$ if and only if $\dim (A^{\prime }\cap B)=1;$

$iv)$ $\dim ([A;B])=3$ if and only if $\left\langle A,B\right\rangle \simeq 
\Bbb{P}^{3}.$
\end{lemma}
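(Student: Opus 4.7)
The plan is to combine Terracini's Lemma (Lemma \ref{lemT}) with the Grassmann dimension formula. Since $B$ is a linear subspace, $T_Q(B)=B$ for every $Q\in B$; hence for a generic $R\in[A;B]$ lying on $\langle P\cup Q\rangle$ with $P\in A_{reg}$ and $Q\in B$, Terracini yields
\[
\dim([A;B])=\dim(\langle T_P(A)\cup T_Q(B)\rangle)=\dim(\langle A'\cup B\rangle).
\]
Since $A'$ and $B$ are both planes, the Grassmann formula gives $\dim(\langle A'\cup B\rangle)=4-\dim(A'\cap B)$ (with the convention $\dim(\emptyset)=-1$). Parts $i)$, $ii)$, $iii)$ are then just the three cases $\dim(A'\cap B)\in\{-1,0,1\}$.

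To legitimise this trichotomy I must exclude $\dim(A'\cap B)=2$, i.e.\ $A'=B$. Were $T_P(A)$ to equal $B$ for generic $P\in A_{reg}$, then with $L:=B$ each generic tangent plane of $A$ would contain the linear space $L$; Remark \ref{remconi} forces such an $L$ to be a point, contradicting $\dim B=2$. (Here one uses non-degeneracy of $A$ in $\p^n$, $n\geq 3$, which prevents $A$ from being a plane.)

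For $iv)$, the implication $\langle A\cup B\rangle\simeq\p^{3}\Rightarrow\dim([A;B])=3$ follows from the obvious $[A;B]\subseteq\langle A\cup B\rangle$, which yields $\dim([A;B])\leq 3$, combined with $iii)$: inside $\p^{3}$ the two distinct planes $A'$ and $B$ necessarily meet in a line, so $\dim(A'\cap B)=1$ and $\dim([A;B])=3$. For the converse, assume $\dim([A;B])=3=\dim B+1$. I would mimic the argument of Lemma \ref{lemcursup} $ii)$: Proposition 1.3 of \cite{a1} gives $B\subseteq Vert([A;B])$. If $[A;B]\not\simeq\p^{3}$, then $Vert([A;B])$ has codimension $\geq 2$ in $[A;B]$, hence dimension $\leq 1$, contradicting $B\subseteq Vert([A;B])$. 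Therefore $[A;B]\simeq\p^{3}$, and Lemma \ref{lemJ} $3)$ gives $\langle A\cup B\rangle=\langle[A;B]\rangle\simeq\p^{3}$.

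The only non-mechanical step is the converse in $iv)$; parts $i)$--$iii)$ fall out of Terracini plus Grassmann as soon as one has ruled out $A'=B$, whereas $iv)$ requires the standard ``vertex of a proper cone has codimension $\geq 2$'' dichotomy already exploited in Lemmas \ref{lemcursup} and \ref{lemretta}.
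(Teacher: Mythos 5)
Your proof is correct. For parts $i)$--$iii)$ and the forward implication of $iv)$ you follow essentially the paper's own route: Terracini (Lemma \ref{lemT}) plus the Grassmann formula, the only cosmetic difference being that you exclude $A^{\prime }=B$ via Remark \ref{remconi}, while the paper disposes of that case implicitly through the bound $\dim ([A;B])\geq 3$. The genuine divergence is in the converse of $iv)$: the paper argues ad hoc, taking two generic points $P,Q\in A\backslash B$ and observing that the two linear spaces $[P;B]\simeq [Q;B]\simeq \Bbb{P}^{3}$, which meet along the plane $B$, cannot both lie in the irreducible threefold $[A;B]$ unless they coincide, whence $A\subseteq [Q;B]$ and $\left\langle A\cup B\right\rangle \simeq \Bbb{P}^{3}$; you instead invoke Proposition 1.3 of \cite{a1} (a join of dimension one more than a factor forces the other factor into the vertex) together with the fact that the vertex of a non-linear variety has codimension at least $2$, which is exactly the mechanism the paper itself uses in Lemmas \ref{lemcursup} and \ref{lemretta}. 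Your route is more uniform with those neighbouring lemmas and avoids the slightly delicate count of two $\Bbb{P}^{3}$'s inside a threefold; the paper's argument is more elementary in that it needs nothing beyond irreducibility of the join. One small bookkeeping remark: from $\dim ([A;B])=\dim (B)+1$ Proposition 1.3 literally places the other factor $A$ inside $Vert([A;B])$, while $B\subseteq Vert([A;B])$ comes from reading the same equality as $\dim ([A;B])=\dim (A)+1$; since $\dim A=\dim B=2$ either inclusion gives a two-dimensional subvariety of the vertex and hence the required contradiction, so nothing in your argument is affected.
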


\begin{proof}
As $n\geq 3$, $\dim ([A;B])\geq 3$ and $i)$, $ii)$ and $iii)$ are
consequences of lemma \ref{lemT}. If $\left\langle A,B\right\rangle \simeq 
\Bbb{P}^{3}$ obviously $\dim (A^{\prime }\cap B)=1.$ On the other hand, let
us assume that $\dim (A^{\prime }\cap B)=1$ and let us consider two
different generic points $P,Q\in A\backslash B;$ we have $[A;B]\supseteq
[P;B]\cup [Q;B]$ and $[P;B]\simeq [Q;B]\simeq \Bbb{P}^{3}.$ If $P\notin $ $%
[Q;B]$ we have $\dim ([A;B])\geq 4,$ because $[A;B]$ is irreducible and it
cannot contain the union of two distinct copies of $\Bbb{P}^{3},$
intersecting along a plane, unless $\dim ([A;B])\geq 4,$ but this is a
contradiction with $\dim (A^{\prime }\cap B)=1$ by $ii)$. Hence $P\in
[Q;B]\simeq \Bbb{P}^{3}$ and $A\subseteq [Q;B]\simeq \Bbb{P}^{3}$ as $P$ is
a generic point of $A.$
\end{proof}

\begin{lemma}
\label{lemJ2} Let $A,B$ be two irreducible, surfaces in $\Bbb{P}^{n},$ $%
n\geq 5.$ Let us assume that neither $A$ nor $B$ is a plane. Set $%
L:=\left\langle A\right\rangle \cap \left\langle B\right\rangle $, $%
M:=\left\langle A\cup B\right\rangle ,$ $m:=\dim (M).$ Then:

$i)$ if $L=\emptyset ,$ $\dim ([A;B])=5;$

$ii)$ if $L$ is a point $P,$ $\dim ([A;B])\leq 4$ if and only if $A$ and $B$
are cones with vertex $P;$

$iii)$ if $\dim (L)=1$, $\dim ([A;B])\leq 4$ if and only if:

- there exists a point $P\in L$ such that $A$ and $B$ are cones with vertex $%
P,$ or

- $m\leq 4;$

$iv)$ if $\dim (L)=2,$ $\dim ([A;B])\leq 4$ if and only if:

- there exists a point $P\in L$ such that $A$ and $B$ are cones with vertex $%
P,$ or

- $\dim (\left\langle A\right\rangle )=\dim (\left\langle B\right\rangle )=3$
and $m=4.$
\end{lemma}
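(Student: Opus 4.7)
The master tool would be Terracini's Lemma~\ref{lemT}: for generic $P\in A_{reg}$ and $Q\in B_{reg}$,
\[
\dim[A;B]\;=\;\dim\langle T_P(A)\cup T_Q(B)\rangle\;=\;4-\dim\bigl(T_P(A)\cap T_Q(B)\bigr),
\]
with the convention $\dim\emptyset=-1$. Since $T_P(A)\subseteq\langle A\rangle$ and $T_Q(B)\subseteq\langle B\rangle$, the intersection sits inside $L$. Coupled with Remark~\ref{remconi}---which forbids any fixed positive-dimensional linear subspace from lying in $T_P(A)$ for generic $P$ (as $A$ is not a plane)---this reduces the whole lemma to tracking the possible dimensions of $T_P(A)\cap L$ and $T_Q(B)\cap L$ inside $L$. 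Write $d_A:=\dim\langle A\rangle$ and $d_B:=\dim\langle B\rangle$; both are $\geq 3$.

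Cases (i)--(iii) then fall out quickly. In (i), the intersection is empty, so $\dim[A;B]=5$. In (ii) with $L=\{P_0\}$, the intersection is $\emptyset$ or $\{P_0\}$; the second case holds generically iff $P_0\in T_P(A)$ and $P_0\in T_Q(B)$ generically, which by Remark~\ref{remconi} amounts to $A$ and $B$ being cones of vertex $P_0$. In (iii) with $L=\ell$ a line, the intersection cannot equal $\ell$ (this would place $\ell$ inside $T_P(A)$ generically, contradicting Remark~\ref{remconi}); each of $T_P(A)\cap\ell$ and $T_Q(B)\cap\ell$ is therefore a single point (the empty case yields $\dim[A;B]=5$), and requiring $T_P(A)\cap T_Q(B)\neq\emptyset$ generically forces those two points to coincide and to be constant in $P,Q$, i.e., to equal a common $P_0\in\ell$---which via Remark~\ref{remconi} is the same as $A,B$ being cones of common vertex $P_0$. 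The ``or $m\leq 4$'' alternative in (iii) is vacuous, because $d_A,d_B\geq 3$ and $\dim L=1$ force $m\geq 5$.

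Case (iv), $\dim L=2$, is where the real work lies. The ``if'' direction is immediate: the cone hypothesis places $P_0$ in every $T_P(A)\cap T_Q(B)$, while $d_A=d_B=3$ gives $m=4$ and so $[A;B]\subseteq M$ has dimension $\leq 4$. For the converse, suppose $\dim[A;B]\leq 4$. If $m=4$, then $d_A+d_B=6$ with each $\geq 3$ forces $d_A=d_B=3$. Otherwise $m\geq 5$ and, without loss of generality, $d_A\geq 4$. The main obstacle---and the intermediate statement I would isolate---is:
\emph{if $d_A\geq 4$, then $T_P(A)\cap L$ has dimension at most $0$ for generic $P$.}
The proof would go: for generic $P\in A$ one has $P\notin L$ (since $A\not\subseteq L$), so if $\ell_A(P):=T_P(A)\cap L$ were a line then $T_P(A)=\langle\ell_A(P),P\rangle\subseteq\langle L,P\rangle$, a 3-plane containing $L$; the rational projection $\pi_L$ from $L$, restricted to $A$, would then have vanishing differential at every generic $P$, forcing $\pi_L(A)$ to be a single point and $A$ to lie in a 3-plane, contradicting $d_A\geq 4$. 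So $T_P(A)\cap L=\{p_A(P)\}$ is a single point (emptiness would give $\dim[A;B]=5$). If $d_B\geq 4$ the symmetric statement gives $T_Q(B)\cap L=\{p_B(Q)\}$, and non-emptiness of $T_P(A)\cap T_Q(B)$ demands $p_A(P)=p_B(Q)$ generically, forcing both to a common constant $P_0\in L$ and yielding cones via Remark~\ref{remconi}. If instead $d_B=3$, then $T_Q(B)\cap L$ is automatically a line $\ell_B(Q)$, and requiring $p_A(P)\in\ell_B(Q)$ generically forces $p_A$ to be constant (otherwise the positive-dimensional image of $p_A$ would pin each $\ell_B(Q)$ to a fixed line, contradicting Remark~\ref{remconi}); Remark~\ref{remconi} then finishes the argument.
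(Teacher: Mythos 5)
Your proof is correct and follows essentially the paper's own route: Terracini's Lemma \ref{lemT} reduces everything to the generic intersection $T_P(A)\cap T_Q(B)\subseteq L$, and Remark \ref{remconi} turns a fixed point (resp.\ a fixed line) in the generic tangent planes into the cone alternative (resp.\ a contradiction), exactly the case-by-case scheme of the paper. The only local difference is in case $iv)$: where the paper invokes Lemma \ref{lemJ1} $iii)$--$iv)$ to pass from $\dim (T_P(A)\cap L)=1$ to $\dim (\left\langle A\right\rangle )=3$, you prove the equivalent implication directly via the projection from $L$ and a vanishing-differential argument (and you spell out the mixed subcase $\dim (\left\langle A\right\rangle )\geq 4$, $\dim (\left\langle B\right\rangle )=3$ that the paper compresses); your remark that the ``$m\leq 4$'' clause in $iii)$ is vacuous when neither surface is a plane is also correct.
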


\begin{proof}
$i)$ let $A^{\prime }$ be the tangent plane at a generic point of $A_{reg}.$
Let $B^{\prime }$ be the tangent plane at a generic point of $B_{reg}.$ We
have $A^{\prime }\cap B^{\prime }=\emptyset $ so that $i)$ follows from
Lemma \ref{lemT}.

$ii)$ Obviously, in any case, if $A$ and $B$ are cones with a common vertex $%
P$, $A^{\prime }$ and $B^{\prime }$ contain $P$ so that $\dim ([A;B])\leq 4$
by Lemma \ref{lemT}. On the other hand, if $L=P,$ $A^{\prime }\cap B^{\prime
}\neq \emptyset $ only if $A^{\prime }\cap B^{\prime }=P$ and this implies
that the tangent planes at the generic points of $A$ and $B$ contain $P$.
Hence $A$ and $B$ are cones with common vertex $P.$

$iii)$ If $m\leq 4$ obviously $\dim ([A;B])\leq 4.$ Let us assume that $%
m\geq 5$ and $\dim ([A;B])\leq 4.$ Lemma \ref{lemT} implies $A^{\prime }\cap
B^{\prime }\neq \emptyset ,$ while, obviously, $A^{\prime }\cap B^{\prime
}\subseteq L.$ Neither $A^{\prime }$ nor $B^{\prime }$ can contain $L$
because $A$ and $B$ are not planes. Hence $A^{\prime }\cap B^{\prime }$ is a
point $P\in L$ and we can argue as in $ii).$

$iv)$ Let us assume that $\dim ([A;B])\leq 4$ and that $A$ and $B$ are not
cones with a common vertex $P.$ By Lemma \ref{lemT} we have $A^{\prime }\cap
B^{\prime }\neq \emptyset ,$ and, obviously, $A^{\prime }\cap B^{\prime
}\subseteq L.$ As $A$ and $B$ are not cones with a common vertex it is not
possible that $A^{\prime }\cap B^{\prime }$ is a fixed point and it is not
possible that $A^{\prime }\cap B^{\prime }$ is a fixed line because $A$ and $%
B$ are not planes. Hence $\dim (A^{\prime }\cap L)=\dim (B^{\prime }\cap
L)=1 $ and in this case $\dim ([A;L])=\dim ([B;L]=$ $3$ by Lemma \ref{lemJ1} 
$iii).$ It follows that $\dim (\left\langle A\right\rangle )=\dim
(\left\langle B\right\rangle )=3$ by Lemma \ref{lemJ1} $iv)$, hence $m=4.$
\end{proof}

\begin{lemma}
\label{lemJ3} Let $A,B$ be two irreducible surfaces in $\Bbb{P}^{n},$ $n\geq
5.$ Set $L:=\left\langle A\right\rangle \cap \left\langle B\right\rangle $, $%
M:=\left\langle A\cup B\right\rangle ,$ $m:=\dim (M).$ Let us assume that $%
\dim (\left\langle A\right\rangle )=\dim (\left\langle B\right\rangle )=4,$ $%
\dim (L)=3,$ $m=5,$ $\dim ([A;B])\leq 4.$ Then $A$ and $B$ are cones with
the same vertex.
\end{lemma}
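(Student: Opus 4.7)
The strategy is to use Terracini's lemma (Lemma \ref{lemT}) to translate $\dim [A;B] \leq 4$ into a statement about two families of lines inside $L \simeq \mathbb{P}^3$, and then derive the cone structure via Remark \ref{remconi}. First I would check that in fact $\dim [A;B] = 4$: since $A \neq B$ are distinct irreducible surfaces, $\dim [A;B] \geq 3$, and the equality $\dim [A;B] = 3 = 1 + \dim A = 1 + \dim B$ is impossible, because Proposition 1.3 of \cite{a1} (as used in the proofs of Lemmas \ref{lemcursup} and \ref{lemretta}) would force $A \cup B \subset \mathrm{Vert}([A;B])$, which is either all of $[A;B] \simeq \mathbb{P}^3$ (contradicting $\langle A \rangle \simeq \mathbb{P}^4$) or a linear space of dimension $\leq 1$ (impossible for a surface).

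With $\dim [A;B] = 4$, Terracini gives $\dim \langle T_P(A) \cup T_Q(B) \rangle = 4$ for generic $(P,Q) \in A \times B$, which forces the two tangent $2$-planes to meet in a single point, necessarily lying in $L = \langle A \rangle \cap \langle B \rangle$. Set $\ell_A(P) := T_P(A) \cap L$ and $\ell_B(Q) := T_Q(B) \cap L$. Since $L$ is a hyperplane in $\langle A \rangle$, $\dim \ell_A(P) \geq 1$; and if $T_P(A) \subset L$ held for generic $P$, Lemma \ref{lemJ}(4) would give $\langle A \rangle \subset L$, a contradiction. So $\ell_A(P)$ is a line, and similarly $\ell_B(Q)$. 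The Terracini intersection condition then reads: every line of the irreducible family $\mathcal{F}_A := \overline{\{\ell_A(P) : P \in A\}}$ meets every line of the family $\mathcal{F}_B := \overline{\{\ell_B(Q) : Q \in B\}}$ inside $L \simeq \mathbb{P}^3$.

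The heart of the argument is to show that $\mathcal{F}_A$ is the set of all lines of $L$ through a fixed point $v_A$, that $\mathcal{F}_B$ is the set of lines through $v_B$, and that $v_A = v_B$. After checking that $\mathcal{F}_A$ is $2$-dimensional (a positive-dimensional fiber of the Gauss-type map $A \dashrightarrow G(1,L)$ would force the tangent planes to $A$ to contain a fixed line along that fiber, and combined with $\dim \langle A \rangle = 4$ and Remark \ref{remconi} this leads to a contradiction), I would invoke the classical dichotomy for $2$-dimensional families of pairwise incident lines in $\mathbb{P}^3$: either all lines pass through a common point, or all of them lie in a common plane $\pi_A$. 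The second option is ruled out by Lemma \ref{lemJ1}: since $\ell_A(P) \subset T_P(A) \cap \pi_A$, part $(iii)$ would give $\dim [A;\pi_A] = 3$, which by part $(iv)$ requires $\langle A,\pi_A \rangle \simeq \mathbb{P}^3$, contradicting $\pi_A \subset \langle A \rangle = \mathbb{P}^4$. Therefore $\mathcal{F}_A$ consists of all lines through $v_A$; since $v_A \in T_P(A)$ for generic $P$, Remark \ref{remconi} yields that $A$ is a cone with vertex $v_A$, and symmetrically $B$ is a cone with vertex $v_B$. Finally, if $v_A \neq v_B$ and $m \in \mathcal{F}_B$ is a line through $v_B$ not containing $v_A$, then only the $1$-dimensional subfamily of lines of $\mathcal{F}_A$ lying in the plane $\langle v_A \cup m \rangle$ can meet $m$, contradicting that every line of the $2$-dimensional family $\mathcal{F}_A$ meets $m$. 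Hence $v_A = v_B$.

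The main obstacle is the dimension analysis in the third paragraph: rigorously showing that the rational map $A \dashrightarrow G(1,L)$ sending $P \mapsto \ell_A(P)$ has $2$-dimensional image (ruling out a degenerate Gauss behaviour), and justifying cleanly the trichotomy for a $2$-dimensional family of pairwise incident lines in $\mathbb{P}^3$. Once this is in place, the rest reduces to successive applications of Lemmas \ref{lemJ}, \ref{lemJ1} and Remark \ref{remconi}.
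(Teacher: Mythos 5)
Your plan breaks down at the step you yourself flag as the ``heart of the argument''. First, the claim that the family $\mathcal{F}_A=\{T_P(A)\cap L\}$ is $2$-dimensional is simply false in the relevant cases: if $A$ is a cone with vertex in $L$ (which is exactly the situation the lemma concludes with), or a developable ruled surface, the tangent plane is constant along the rulings, so the map $P\mapsto T_P(A)\cap L$ has $1$-dimensional fibres and $\mathcal{F}_A$ is only $1$-dimensional. Your proposed justification does not repair this: a positive-dimensional fibre only tells you that the tangent planes \emph{along that fibre} (a curve in $A$) contain a fixed line, whereas Remark \ref{remconi} needs the tangent plane at the \emph{generic} point of the surface to contain a fixed linear space, so no contradiction follows. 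Consequently the later steps that use $2$-dimensionality (``$\mathcal{F}_A$ is the set of \emph{all} lines through $v_A$'', and the final count showing $v_A=v_B$) are not justified as written.

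Second, and more seriously, the dichotomy you invoke (a family of pairwise incident lines in $\p^3$ lies in a plane or passes through a point) is applied to $\mathcal{F}_A$, but pairwise incidence \emph{within} $\mathcal{F}_A$ is never proved and does not follow from the hypothesis: Terracini applied to $[A;B]$ only gives that each $l_P$ meets each $l_Q$, not that $l_P\cap l_{P'}\neq\emptyset$. This missing case is precisely where the paper's proof does its real work: when the $l_P$ are generically pairwise disjoint they sweep out a smooth quadric in $L\simeq\p^3$ (one ruling), and excluding this requires the dual-variety argument ($A^*\subseteq\mathcal{Q}^*$ in $M^*$, forcing $A=\mathcal{Q}$ unless $A$ is developable) together with a separate analysis of the developable case via the hyperplane section $C=A\cap L$ and the join $[\overline{l_Q};A]$. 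Only after that does the paper reach the point/plane dichotomy, and even then it must dispose of the ``plane'' alternatives ($U_A$, $U_B$) by arguments close to, but more careful than, your use of Lemma \ref{lemJ1}. So the proposal skips the main content of the proof; as it stands it would at best prove the statement under the unwarranted extra assumptions that the lines $l_P$ are pairwise incident and form a $2$-dimensional family.
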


\begin{proof}
By Lemma \ref{lemT} we know that for any pair of points $(P,Q)\in
A_{reg}\times B_{reg},$ $\emptyset \neq T_{P}(A)\cap T_{Q}(B)\subseteq L.$
As $(P,Q) $ are generic, we can assume that $P\in A\backslash (A\cap L)$ and 
$Q\in B\backslash (B\cap L),$ so that $l_{P}:=$ $T_{P}(A)\cap L$ and $%
l_{Q}:=T_{Q}(B)\cap L$ are lines, intersecting somewhere in $L.$

$(a)$ Let us assume that $l_{P}\cap l_{P^{\prime }}=\emptyset $ for any
generic pair of points $(P,P^{\prime })\in A\backslash (A\cap L).$ Then the
lines $\{l_{P}|P\in A\backslash (A\cap L),P\in A_{reg}\}$ give rise to a
smooth quadric $\mathcal{Q}$ in $L\simeq \Bbb{P}^{3}$ in such a way that the
lines $\{l_{P}\}$ all belong to one of the two rulings of $\mathcal{Q}.$
Note that $\mathcal{Q}\neq A,$ because they have different spans. Now, for
any smooth point $P\in $ $A\backslash (A\cap L),$ let us consider a generic
tangent hyperplane $H_{P}\subset M$ at $P.$ Obviously $H_{P}\supset T_{P}(A)$
and, as $H_{P}$ is generic, it cuts $L$ only along a plane and this plane
contains $l_{P}.$ Hence it is a tangent plane for $\mathcal{Q}.$ It follows
that $H_{P}$ is also a tangent hyperplane for $\mathcal{Q}$ in $M.$
Therefore $A^{*}\subseteq \mathcal{Q}^{*}$ in $M^{*}.$ If $A$ is not a
developable, ruled surface we have $A^{*}=\mathcal{Q}^{*}$ by looking at the
dimension. Hence $A$ $=(A^{*})^{*}=(\mathcal{Q}^{*})^{*}=\mathcal{\ Q}:$
contradiction.

Now let us assume that $A$ is a developable, ruled surface and let us
consider the curve $C:=A\cap L$, which is a hyperplane section of $A.$ We
claim that the support of $C$ is not a line. In fact $C$ must contain a
directrix for $A$ because $C$ is a hyperplane section of $A$. So that if the
support of $C$ is a line $l$ this line must be a directrix for $A$. Hence a
direct local calculation shows that $l$ is contained in every tangent plane
at points of $A_{reg}.$ It follows that $l_{P}=l$ for any point $P\in
A_{reg}:$ contradiction, and the claim is proved. On the other hand, for a
fixed line $\overline{l_{Q}}$ we can consider $[\overline{l_{Q}};C].$ Since
the support of $C$ is not a line $[\overline{l_{Q}};C]=L,$ moreover $[%
\overline{l_{Q}};C]\subsetneq [\overline{l_{Q}};A]$. Hence $\dim ([\overline{%
l_{Q}};A])\geq 4$, but this is a contradiction with Lemma \ref{lemT} because 
$\overline{l_{Q}}$ $\cap T_{P}(A)\neq \emptyset ,$ for any point $P\in
A_{reg}.$

$(b)$ From $(a)$ it follows that $l_{P}\cap l_{P^{\prime }}\neq \emptyset $
for any generic pair of points $(P,P^{\prime })\in A\backslash (A\cap L).$
It is known (and a very easy exercise) that this is possible only if all
lines $\{l_{P}\}$ pass through a fixed point $V_{A}\in L$ or all lines $%
\{l_{P}\}$ lie on a fixed plane $U_{A}\subset L.$ In the same way we get $%
l_{Q}\cap l_{Q^{\prime }}\neq \emptyset $ for any generic pair of points $%
(Q,Q^{\prime })\in B\backslash (B\cap L)$ and that all lines $\{l_{Q}\}$
pass through a fixed point $V_{B}\in L$ or all lines $\{l_{Q}\}$ lie on a
fixed plane $U_{B}\subset L.$

As for any pairs of points $(P,Q)\in A_{reg}\times B_{reg},$ $\emptyset \neq
T_{P}(A)\cap T_{Q}(B)\subseteq L$, we have only four possibilities:

$1)$ $V_{A}=V_{B}$. Hence $A$ and $B$ are cones having the same vertex
(recall that $T_{P}(A)\supset l_{P}\supset V_{A}$ and $T_{Q}(B)\supset
l_{Q}\supset V_{B})$ and we are done;

$2)$ $V_{A}\in U_{B},$ and all lines $\{l_{Q}\}\subset U_{B}$ pass
necessarily through $V_{A},$ so that $A$ and $B$ are cones having the same
vertex in this case too;

$3)$ $V_{B}\in U_{A}$ and we can argue as in case $2);$

$4)$ there exist two planes $U_{A}$ and $U_{B}.$

If $U_{A}\cap U_{B}$ is a line $l,$ then the generic tangent planes $%
T_{P}(A) $ and $T_{Q}(B)$ would contain $l$ and both $A$ and $B$ would be
planes: contradiction. If $U_{A}=U_{B}$, by Lemma \ref{lemT} we get $\dim
([U_{A};A])=\dim ([U_{B};B])=3$ and they are (irreducible) cones as $U_{A}$
and $U_{B}$ are linear spaces. Hence they are $3$-dimensional linear spaces
containing $A$ and $B,$ respectively: contradiction.
\end{proof}

\section{Examples of J-embeddable surfaces}

In Section 4 we give some examples of $J$-embeddable surfaces and we prove a
result concerning the Veronese surface which will be useful for the
classification.

\begin{example}
\label{e1} Let $W$ be a fixed $2$-dimensional linear subspace in $\Bbb{P}%
^{n} $, $n\ge 5$. Let $m$ be a positive integer such that $1\leq m\leq n-2.$
Let us consider $m$ distinct $3$-dimensional linear subspaces $M_{i}\subset $
$\Bbb{P}^{n},$ $1\le i\le m$, such that $W\subset M_{i}$ for $i=1,...,m$ and 
$\left\langle M_{1}\cup ...\cup M_{m}\right\rangle $ $=\Bbb{P}^{n}$. For
each $i=1,\dots ,m$ fix a reduced surface $D_{i}$ of $M_{i}$ in such a way
that $X:=\cup _{i=1}^{m}D_{i}$ spans $\Bbb{P}^{n}$. We claim that $X$ can be 
$J$-projected into a suitable $\Bbb{P}^{4}.$ By Corollary \ref{corjoin} it
suffices to show that $\dim [Sec(X)]\le 4$. Indeed, $\dim [Sec(D_{i})]\le 3$
for all $i$, while $\dim ([D_{i};D_{j}])\le 4$ for all $i\ne j$, because
every $D_{i}\cup D_{j}$ is contained in the $4$-dimensional linear space $%
\langle M_{i}\cup M_{j}\rangle $.
\end{example}

\begin{example}
\label{e1bis} Let $N$ be a fixed $4$-dimensional linear subspace in $\Bbb{P}%
^{n}$, $n\ge 5$. Let $A_{i}\subset N$ be irreducible surfaces, $i=1,...,s$.
Assume that every $A_{i}$ is contained in the intersection of some $3$%
-dimensional cones $E_{j}\subset N$ having a line $l_{j}$ as vertex and let $%
\{B_{jk_{j}}\}$ be a set of pairwise intersecting planes in $\Bbb{P}^{n}$
such that $B_{jk_{j}}\cap N=l_{j}$, with $j,k_{j}\geq 1.$ Set $%
X:=\{A_{i}\cup B_{jk_{j}}\}.$ We claim that $X$ can be $J$-projected into a
suitable $\Bbb{P}^{4}.$

In fact, by Corollary \ref{corjoin}, it suffices to show that $\dim
[Sec(X)]\le 4$ and the only non trivial check is that $\dim
([A_{i};B_{jk_{j}}])\le 4$ for any $A_{i}$ and for any plane $B_{jk_{j}},$
but this follows from Lemma \ref{lemT} because for any $j$ and for any point 
$P\in (A_{i})_{reg}\cap (E_{j})_{reg}$ the tangent plane $T_{P}(A_{i})$ is
contained in $T_{P}(E_{j})\simeq \Bbb{P}^{3},$ hence $T_{P}(A_{i})\cap
l_{j}\neq \emptyset .$
\end{example}

\begin{example}
\label{e2} Let $Y\subset \Bbb{P}^{5}$ be a Veronese surface. Fix a point $%
P\in Y$ and set $X:=Y\cup T_{P}(Y)$. Let us recall that $\dim [Sec(Y)]=4$.
Hence, by Terracini's lemma, we know that $T_{P}(Y)\cap T_{Q}(Y)\neq
\emptyset $ for any pair of points $P,Q\in Y.$ Therefore $\dim
[Y,T_{p}(Y)]=4 $ and $\dim [Sec(X)]=4$ too. Then we can apply Corollary \ref
{corjoin}.
\end{example}

The following proposition shows that the above example is in fact the only
possibility for a surface $X=Y\cup B$ to have $\dim [Sec(X)]=4,$ where $B$
is any irreducible surface.

\begin{proposition}
\label{propYB} Let $Y\subset \Bbb{P}^{n}$ be a Veronese surface embedded in $%
\langle Y\rangle \simeq \Bbb{P}^{5}$, $n\geq 5,$ and let $B\subset \Bbb{P}
^{n}$ be any irreducible surface. Set $X:=Y\cup B$. Thus $\dim [Sec(X)]=4$
if and only if $B$ is a plane in $\langle Y\rangle ,$ tangent to $Y$ at some
point $P.$
\end{proposition}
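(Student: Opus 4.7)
The ``if'' direction is Example \ref{e2}. For the converse, assume $\dim[Sec(X)]=4$; by Corollary \ref{corjoin} this amounts to $\dim[Y;B]\leq 4$. The proof proceeds by showing successively that $B$ is a plane, that $B\subseteq\langle Y\rangle$, and that $B$ is tangent to $Y$, relying throughout on the Veronese being smooth (hence not a cone) and on $\dim\langle Y\rangle=5$.

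If $B$ were not a plane, Lemma \ref{lemJ2} applied with $A=Y$ rules out the cases $\dim(\langle Y\rangle\cap\langle B\rangle)\leq 2$: in each subcase either $Y$ must be a cone (contradiction) or $\dim\langle Y\rangle\leq 4$ (impossible, since $\langle Y\rangle\subseteq\langle Y\cup B\rangle$). The cases $\dim L\geq 3$ are handled by a Terracini analysis in the spirit of Lemma \ref{lemJ3}, again forcing $Y$ to be a cone. So $B$ is a plane; Lemma \ref{lemJ1} then excludes $\dim[Y;B]=3$ (which would give $\langle Y\cup B\rangle\simeq\p^3$) and forces $\dim[Y;B]=4$, with $T_P(Y)\cap B$ a single point for generic $P\in Y$.

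Next, set $L=B\cap\langle Y\rangle$. The cases $\dim L\leq 0$ place a point common to all generic $T_P(Y)$ into $Vert(Y)$, contradicting smoothness of $Y$. If $\dim L=1$, say $L=l$, then $[l;Y]\subseteq[B;Y]$ with both irreducible of dimension $4$: Lemma \ref{lemretta} yields $\dim[l;Y]=4$, because the alternative $\dim[l;Y]=3$ would place $Y$ in a $3$-fold cone with vertex $l$, and case analysis on lines in $Sec(Y)$ (secants of $Y$ or lines inside a fixed tangent plane) shows no line of $\p^5$ meets every tangent plane of $Y$. Hence $[l;Y]=[B;Y]$ by irreducibility, so $B\subseteq[l;Y]\subseteq\langle Y\rangle$, a contradiction. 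Therefore $B\subseteq\langle Y\rangle\simeq\p^5$.

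Finally, consider the rational map $\phi\colon Y\dasharrow B$, $P\mapsto T_P(Y)\cap B$; its image has dimension $0$, $1$, or $2$. Dimension $0$ puts a fixed point into every $T_P(Y)$, making $Y$ a cone. Dimension $1$ is ruled out as follows: the generic fiber $\phi^{-1}(q)$ is a $1$-dimensional subset of $\{P\in Y\colon q\in T_P(Y)\}$, which (by a direct computation on the Veronese) is a conic $K$ for generic $q\in Sec(Y)\setminus Y$; this would force $q\in\bigcap_{P\in K}T_P(Y)$, an empty set since no nonzero quadratic form has infinitely many linear factors. Hence the image is $2$-dimensional, so $B\subseteq\bigcup_{P\in Y}T_P(Y)\subseteq Sec(Y)$, i.e., $B$ is a plane inside the secant cubic of the Veronese. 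Among the two classical families of $\p^2$'s on $Sec(Y)$---tangent planes to $Y$, and planes of conics singular at a fixed point---only the first gives $T_P(Y)\cap B\neq\emptyset$ for generic $P$ (the second fails by direct check), hence $B=T_{P_0}(Y)$. The main obstacle is this final Veronese-specific analysis: ruling out the $1$-dimensional image via the intersection-of-tangent-planes argument, together with the classification of $\p^2$'s on $Sec(Y)$.
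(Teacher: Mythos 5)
The main gap is in your very first reduction, the claim that $B$ must be a plane. Lemma \ref{lemJ2} does dispose of the cases $\dim(\langle Y\rangle\cap\langle B\rangle)\leq 2$ exactly as you say, but for $\dim L\geq 3$ you only offer ``a Terracini analysis in the spirit of Lemma \ref{lemJ3}, again forcing $Y$ to be a cone''. Lemma \ref{lemJ3} requires $\dim\langle A\rangle=\dim\langle B\rangle=4$, $\dim L=3$, $m=5$, and its proof lives entirely inside $L\simeq\Bbb{P}^3$ via the lines $T_P(A)\cap L$; none of this applies when $\dim\langle Y\rangle=5$. In particular the hardest residual case --- an arbitrary non-plane irreducible surface $B$ contained in $\langle Y\rangle\simeq\Bbb{P}^5$ (for instance $B\subset Sec(Y)$), or with $\dim L=3,4$ --- is left with no argument at all, and the asserted mechanism is not even the right one: the obstruction in this range is not that $Y$ would have to be a cone. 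The paper handles exactly this case last, by a dual-variety argument: since $Y^{*}\neq B^{*}$, the tangent plane $B'$ at a generic point of $B$ is not a tangent plane of $Y$, so by the already-completed plane case $\dim([Y;B'])=5$, hence $T_P(Y)\cap T_Q(B)=\emptyset$ generically and $\dim([Y;B])=5$ by Terracini. Some such reduction of the non-plane case to the plane case (plus the duality step excluding ``every generic tangent plane of $B$ is a tangent plane of $Y$'') is genuinely needed; note also that your ordering makes it unavailable, since you treat the non-plane case before establishing the plane case.

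Your treatment of the plane case is a different route from the paper's (which analyzes the scheme $B\cap Y$ via the conic interpretation, projections, and two computer checks) and is essentially viable, but two points need care. First, for generic $q\in Sec(Y)\setminus Y$ the locus $\{P\in Y:q\in T_P(Y)\}$ is \emph{not} a conic: writing $q=ab$, $q\in T_{r^2}(Y)$ forces $r\in\{a,b\}$, so the locus consists of two points (you are confusing it with the entry locus of $Sec(Y)$, which is indeed a conic). Fortunately your closing observation --- a nonzero quadratic form has only finitely many linear factors --- already proves this finiteness and rules out a $1$-dimensional fiber, so the slip is harmless, but the statement should be corrected. Second, both the fact that every line meeting all generic tangent planes lies on $Sec(Y)$ and is a secant line or a line in a tangent plane, and the classification of the planes on the cubic symmetroid $Sec(Y)$ into tangent planes and planes of conics singular at a fixed point, are asserted without proof; they are true and classical, but they carry real weight in your argument and should be justified (e.g.\ by the same pencil-of-conics computations you use elsewhere).
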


\begin{proof}
For the proof it is useful to choose a plane $\Pi $ such that $\langle
Y\rangle \simeq \Bbb{P}^{5}$ is the linear space parametrizing conics of $%
\Pi ,$ i.e. $\left\langle Y\right\rangle \simeq \Bbb{P}[H^{0}(\Pi ,\mathcal{O%
}_{\Pi }(2))].$ Then $Y$ can be considered as the subvariety of $\langle
Y\rangle $ parametrizing double lines of $\Pi $, moreover $Y$ can be also
considered as the $2$-Veronese embedding of $\Pi ^{*}$ via a map $\nu .$

Firstly, let us consider the case in which $B$ is a plane in $\langle
Y\rangle .$ Obviously\newline
$\dim [Sec(X)]=4$ if and only if $\dim [Y;B]=4.$ Note that $\dim [Y;B]=5$ if 
$B\cap Y=\emptyset ,$ because every point $P\in $ $\Bbb{P}^{5}$ is contained
in at least a line intersecting both $B$ and $Y.$ Then we have to consider
all other possibilities for $B\cap Y.$

Let us remark that $\dim [Y;B]=4,$ if and only if the linear projection $\pi
_{B}:\Bbb{P}^{5}--->\Lambda $ is such that $\dim [\overline{\pi
_{B}(Y\backslash B)}]=1,$ where $\Lambda \simeq \Bbb{P}^{2}$ is a generic
plane, disjoint from $B$. In fact $\dim [Sec(X)]=4,$ if and only if $\dim
([B;Y])=4,$ if and only if $\dim (\overline{\bigcup\limits_{y\in Y\backslash
B}\left\langle B\cup y\right\rangle })=4,$ if and only if $\dim [(\overline{%
\bigcup\limits_{y\in Y\backslash B}\left\langle B\cup y\right\rangle })\cap
\Lambda ]=1.$ But $(\overline{\bigcup\limits_{y\in Y\backslash
B}\left\langle B\cup y\right\rangle })\cap \Lambda =$ $\overline{\pi
_{B}(Y\backslash B)}.$

Let us assume that $\dim (B\cap Y)=1.$ It is well known that $Y$ does not
contain lines or other plane curves different from smooth conics. If the
scheme $B\cap Y$ contains a smooth conic $\gamma $, it is easy to see that
the generic fibres of any linear projection as $\pi _{B}$ are $0$
-dimensional. Indeed, by considering the identification $\left\langle
Y\right\rangle \simeq \Bbb{P}[H^{0}(\Pi ,\mathcal{O}_{\Pi }(2))],$ for any
point $P\in Y,$ $T_{P}(Y)$ parametrizes the reducible conics of $\Pi $ whose
components are: a fixed line $r$ of $\Pi $ (such that $P\leftrightarrow
r^{2} $) and any line of $\Pi .$ While $B$ parametrizes the reducible conics
of $\Pi $ having a singular point $Q\in \Pi $ such that the dual line $l\in
\Pi ^{*}$ corresponding to $Q$ is such that $\nu (l)=\gamma .$ Therefore,
for generic $P\in Y,$ $T_{P}(Y)\cap B=\emptyset .$ It follows that $\dim [%
\overline{\pi _{B}(Y\backslash B)}]=2$ and $\dim [Y,B]=5.$ This fact can
also be checked by a direct computation with a computer algebra system, for
instance Macaulay, taking into account that $Y$ is a homogeneous variety, so
that the computation can be made by using a particular smooth conic of $Y.$

Let us assume that $\dim (B\cap Y)=0$ and that $B\cap Y$ is supported at a
point $P\in Y.$ We have to consider three cases:

$i)$ $B$ does not contain any line $l\in T_{P}(Y);$ in this case the
intersection is transversal at $P$ and the projection of $Y$ from $P$ into a
generic $\Bbb{P}^{4}$ gives rise to a smooth cubic surface $Y_{P}$, (recall
that $Y$ has no trisecant lines). The projection of $Y_{P}$ from a line to a
generic plane has generic $0$-dimensional fibres. Hence $\dim [\overline{\pi
_{B}(Y\backslash B)}]=2$ for any generic projection $\pi _{B}$ as above and $%
\dim [Y;B]=5.$

$ii)$ $B$ contains only a line $l\in T_{P}(Y);$ in this case the generic
fibres of any linear projection as $\pi _{B}$ are $0$-dimensional. This fact
can be proved by a direct computation with a computer algebra, for instance
Macaulay; as above the computation can be made by using a particular line of 
$Y$. Hence $\dim $ $\overline{[\pi _{B}(Y\backslash B)}]=2$ and $\dim
[Y;B]=5.$

$iii)$ $B$ contains all lines $l\in T_{P}(Y),$ i.e. $B=T_{P}(Y).$ In this
case example \ref{e2} shows that $\dim [Sec(X)]=4$.

Let us assume that $\dim (B\cap Y)=0$ and that $B\cap Y$ is supported at two
distinct points $P,Q$ $\in Y,$ at least. By the above analysis we have only
to consider the case in which the intersection is transversal at $P$ and at $%
Q$. In this case the projection of $Y$ from the line $\left\langle
P,Q\right\rangle $ into a generic $\Bbb{P}^{3}$ gives rise to a smooth
quadric, (recall that $Y$ has no trisecant lines), and any linear projection
of a smooth quadric from a point of $\Bbb{P}^{3}$ has $\Bbb{P}^{2}$ as its
image. Hence $\dim [$ $\overline{\pi _{B}(Y\backslash B)}]=2$ and $\dim
[Y,B]=5.$

Now let us consider the case in which $B$ is a plane, but $B\nsubseteq
\left\langle Y\right\rangle .$ Note that $\dim [Sec(X)]=4$ implies that $%
\dim [Y;B]\leq 4$. Hence $T_{P}(Y)\cap B\neq \emptyset $ for any generic
point $P\in Y$ by Lemma \ref{lemT}. Let us consider $B\cap \langle Y\rangle
. $ If $B\cap \langle Y\rangle $ is a point $R,$ we would have: $R\in
T_{P}(Y)$ for any generic $P\in Y$ and this is not possible as $Y$ is not a
cone (recall Remark \ref{remconi}). If $B\cap \langle Y\rangle $ is a line $%
L,$ it is not possible that $L\subseteq T_{P}(Y)$ for any generic $P\in Y$
as $Y$ is not a cone (recall Remark \ref{remconi}). Then we would have: $%
\dim [T_{P}(Y)\cap L]=0$ for any generic $P\in Y$ and for a fixed line $%
L\subset \langle Y\rangle .$ This is not possible: $\langle Y\rangle $ can
be considered as the space of conics lying on some $\Bbb{P}^{2},$ $L$ is a
fixed pencil of conics, $T_{P}(Y)$ is the web of conics reducible as a fixed
line $l_{P}$ and another line. For generic, fixed, $l_{P}$, the web does not
contain any conic of the pencil $L.$

Now let us consider the case in which $B$ is not a plane. As above, $\dim
[Sec(X)]=4$ implies that $\dim ([Y;B])\leq 4.$ Let us consider $M:=\langle
Y\cup B\rangle $ and let us consider the dual varieties $Y^{*}$ and $B^{*}$
in $M^{*}.$ As $Y\neq B$ we get $Y^{*}\neq B^{*}$ (otherwise $Y^{*}=B^{*}$
would imply $Y=B$). Hence the tangent plane $B^{\prime }$ at a generic point
of $B$ is not tangent to $Y.$ By the above arguments we get $\dim
([Y;B^{\prime }])=5.$ It follows that $T_{P}(Y)\cap B^{\prime }=\emptyset $
for the generic point $P\in Y$ by Lemma \ref{lemT}. Therefore $T_{P}(Y)\cap
T_{Q}(B)=\emptyset $ for generic points $P\in Y$ and $Q\in B$ and $\dim
([Y;B])=5$ by Lemma \ref{lemT}.
\end{proof}

\begin{remark}
\label{remconica} A priori, if $\dim [$ $\overline{\pi _{B}(Y\backslash B)}%
]=1$ for a generic $\pi _{B}$ as above, $\overline{\pi _{B}(Y\backslash B)}$
is a smooth conic. In fact $\overline{\pi _{B}(Y\backslash B)}$ is an
integral plane curve $\Gamma .$ Let $f:$ $\Bbb{P}^{1}\rightarrow \Gamma $ be
the normalization map given by a line bundle $\mathcal{O}_{\Bbb{P}^{1}}(e),$ 
$e\geq 1,$ and let $u:Y^{\prime }\rightarrow Y$ be the birational map such
that $\pi _{B}\circ u$ is a morphism; we can assume that $Y^{\prime }$ is
normal. The morphism $u$ induces a morphism $v:Y^{\prime }\rightarrow \Bbb{P}%
^{1},$ set $D:=v^{*}[\mathcal{O}_{\Bbb{P}^{1}}(1)].$ We have $%
h^{0}(Y^{\prime },D)=6$ because $Y$ is linearly normal and the restriction
of $D$ to the fibres of $u$ is trivial. On the other hand, the map $f$
induces an injection from $H^{0}(\Bbb{P}^{1},\mathcal{O}_{\Bbb{P}^{1}}(e))$
into a $3$-codimensional linear subspace of $H^{0}(Y^{\prime },D)$. Hence $%
h^{0}(\Bbb{P}^{1},\mathcal{O}_{\Bbb{P}^{1}}(e))=3$, hence $e=2$ and $\Gamma $
is a conic, necessarily smooth.
\end{remark}

\section{Surfaces having at most two irreducible components}

In this section we study the cases in which $\dim ([A;B])\leq 4,$ where $A$
and $B$ are irreducible surfaces, eventually $A=B.$ The following lemma,
proved by Dale in \cite{d}, is the first step, concerning the case $A=B$.

\begin{lemma}
\label{lemVeronese} Let $A$ be an irreducible surface in $\Bbb{P}^{n}$, then 
$\dim [Sec(A)]\leq 4$ if and only if one of the following cases occurs:

$i)$ $\dim (\left\langle A\right\rangle )\leq 4;$

$ii)$ $A$ is the Veronese surface in $\left\langle A\right\rangle \simeq $ $%
\Bbb{P}^{5};$

$iii)$ $A$ is a cone.
\end{lemma}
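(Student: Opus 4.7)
\textbf{Sufficiency} is direct. (i) follows from $Sec(A) \subseteq \langle A\rangle$. For (iii), if $V \in Vert(A)$, then $V \in T_P(A)$ for every $P \in A_{reg}$, so $V \in T_P(A) \cap T_Q(A)$ for every pair, and Terracini's Lemma \ref{lemT} gives $\dim Sec(A) = \dim\langle T_P(A) \cup T_Q(A)\rangle \leq 4$. For (ii), I would use the identification $\langle A\rangle \simeq \mathbb{P}[H^0(\Pi,\mathcal{O}_\Pi(2))]$ (as invoked later in Proposition \ref{propYB}), under which $A$ is the locus of double lines $\ell^2$ and $T_P(A)$, at $P \leftrightarrow \ell_P^2$, parametrizes the reducible conics $\ell_P \cdot \ell$; then $T_P(A) \cap T_Q(A)$ contains the single conic $\ell_P \cdot \ell_Q$, and Terracini again yields $\dim Sec(A) \leq 4$.

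\textbf{Necessity.} Assume $\dim\langle A\rangle \geq 5$ and $\dim Sec(A) \leq 4$; the goal is to show $A$ is a cone or the Veronese in $\mathbb{P}^5$. Terracini's Lemma \ref{lemT} gives $T_P(A) \cap T_Q(A) \neq \emptyset$ for generic $(P,Q) \in A_{reg} \times A_{reg}$. For a generic $P_0 \in A_{reg}$, set
$$\Xi := \overline{\bigcup_{Q \in A_{reg}}\bigl(T_{P_0}(A) \cap T_Q(A)\bigr)} \subseteq T_{P_0}(A) \simeq \mathbb{P}^2.$$
If $\Xi$ is a single point $V$, then $V \in T_Q(A)$ for every generic $Q$, and Remark \ref{remconi} forces $A$ to be a cone with vertex $V$, which is case (iii).

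\textbf{The main obstacle} is the remaining case $\dim\Xi \geq 1$, which must be identified with case (ii). My strategy is to analyze the Gauss map $\gamma : A \map \mathbb{G}(2,n)$ together with the incidence geometry of the two-parameter family of pairwise-meeting tangent planes $\{T_P(A)\}_{P \in A_{reg}}$. First one disposes of the developable case $\dim \gamma(A) \leq 1$: either $A$ is a cone (already handled) or a non-cone scroll, but then a direct analysis of the rulings, combined with Remark \ref{remconi}, shows that the pairwise-meeting condition is incompatible with $\dim\langle A\rangle \geq 5$ unless a common point of all tangent planes exists, contradicting the non-cone assumption. When $\gamma$ is generically finite, one performs a dimension count on this two-parameter family of planes in $\mathbb{P}^n$: the absence of a common intersection point together with pairwise meeting projectively rigidifies the family, forces $\langle A \rangle \simeq \mathbb{P}^5$, and identifies $A$ with the Veronese surface, the unique irreducible non-cone surface with $\dim Sec = 4$ in $\mathbb{P}^5$. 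I would refer to Dale \cite{d} for the detailed execution of this final case analysis, since the only delicate step is the non-ruled/non-developable subcase where the classification becomes rigid.
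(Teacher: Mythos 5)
Your sufficiency direction is fine (and your explicit tangent-plane computation for the Veronese is a pleasant alternative to the paper's ``obvious''), and the first reduction in the necessity direction --- Terracini forces generic tangent planes to meet, and a common point forces a cone via Remark \ref{remconi} --- is correct. But the remaining case, which you yourself call the main obstacle, is exactly where the theorem lives, and your treatment of it is not a proof. The claim that in the developable, non-cone case ``a direct analysis of the rulings'' rules everything out, and the claim that when the Gauss map is generically finite a ``dimension count'' rigidifies the family of pairwise-meeting tangent planes, forces $\langle A\rangle\simeq\Bbb{P}^{5}$ and identifies $A$ with the Veronese, are assertions of precisely the hard content (Severi's theorem in the singular setting), with no argument given. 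Deferring at that point to Dale \cite{d} does not repair this: Dale's theorem \emph{is} the statement of the lemma for $\dim(\left\langle A\right\rangle )\geq 5$, so your citation amounts to quoting the result you set out to prove, and the partial reductions preceding it do no logical work. Either cite Dale for the whole lemma at the outset (legitimate, since the paper itself attributes the lemma to him) or actually carry out the classification; as written, the necessity direction has a genuine gap.

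For comparison, the paper's own necessity argument never analyzes tangent-plane incidence. It first notes $Sec(A)$ is not a linear space, deduces that the secant defect is exactly $2$, i.e. $\dim [Sec(A)]-\dim (A)=2$, then uses Proposition 2.6 of \cite{a2} to get $Vert[Sec(A)]=Vert(A)$, so $A$ is a cone if and only if $Sec(A)$ is; in the non-cone case $A$ is then an $E_{2,1}$ variety in the sense of \cite{a2}, and Theorem 3.10 of \cite{a2} yields the Veronese surface. That route is also a reduction to a cited classification, but it is a complete and checkable one: every hypothesis of the quoted theorem is verified, whereas in your sketch the decisive steps are left as unsupported claims.
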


\begin{proof}
Firstly let us prove that in all cases $i),$ $ii),$ $iii)$ we have $\dim
[Sec(A)]\leq 4.$ For $i)$ and $ii)$ it is obvious. In case $iii)$ $A$ is a
cone over a curve $C$ and vertex $P$, then $[A;A]$ is a cone over $[C;C]$
and vertex $P$ having dimension $1+\dim ([C;C])$ and $\dim ([C;C])\leq 3.$
Note that, in case $iii),$ $\dim (\left\langle A\right\rangle )$ could be
very big.

Now let us assume that $\dim [Sec(A)]\leq 4$ and that $\dim (\left\langle
A\right\rangle )\geq 5.$ If $Sec(A)$ is a linear space then $\dim
(\left\langle A\right\rangle )\leq 4$. Hence we can assume that $Sec(A)$ is
not a linear space. By \cite{a2}, page. 17, we have $\dim [Sec(A)]-\dim
(A)\geq 2,$ on the other hand $\dim [Sec(A)]-\dim (A)\leq 2$ in any case, so
that $\dim [Sec(A)]-\dim (A)=2.$ By Proposition 2.6 of \cite{a2} we have $%
Vert[Sec(A)]=Vert(A)$. Hence $A$ is a cone if and only if $Sec(A)$ is a cone.

Let us assume that $A$ is not a cone, by the previous argument we know that $%
Sec(A)$ is not a cone. Hence $A$ is an $E_{2,1}$ variety according to
Definition 2.4 of \cite{a2}. Now Lemma \ref{lemVeronese} follows from
Definition 2.7 and Theorem 3.10 of \cite{a2}.
\end{proof}

\begin{lemma}
\label{lemovvio} Let $A,B$ be two distinct, irreducible surfaces in $\Bbb{P}%
^{n},$ $n\geq 3,$ such that $A$ is a cone over an irreducible curve $C$ and
vertex $P.$ Then $\dim ([A;B])=1+\dim ([C;B])$ unless:

$i)$ $\dim (\left\langle A\cup B\right\rangle )\leq 4;$

$ii)$ $B$ is a cone over an irreducible curve $C^{\prime }$ and vertex $P$
or a plane passing through $P.$
\end{lemma}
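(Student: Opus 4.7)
The plan is to start from the associativity identity of the join. By Lemma~\ref{lemJ}(5), since $A=[P;C]$, we have $[A;B] = [[P;C];B] = [P;[C;B]]$. For any irreducible positive-dimensional variety $W$, the cone $[P;W]$ has dimension $\dim W$ or $\dim W + 1$; it equals $W$ (the former case) exactly when $P\in\mathrm{Vert}(W)$. Hence $\dim[A;B]=1+\dim[C;B]$ unless $P\in\mathrm{Vert}([C;B])$. So I assume $P\in\mathrm{Vert}([C;B])$ and derive (i) or (ii).

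I would first dispose of the easy cases. If $\dim[C;B]=2$, then by Lemma~\ref{lemcursup}(iii) $B$ is a plane containing $C$, $[C;B]=B$, and $P\in\mathrm{Vert}(B)=B$ would force $A=[P;C]\subseteq B$ and hence $A=B$, contradicting distinctness. If $\dim[C;B]=3$, then Lemma~\ref{lemcursup}(ii) gives $\langle C\cup B\rangle\simeq\mathbb{P}^3$ and $P\in[C;B]\subseteq\mathbb{P}^3$, so $\langle A\cup B\rangle\subseteq\mathbb{P}^3$, giving (i). If $[C;B]$ is a linear $\mathbb{P}^4$, then $\langle A\cup B\rangle\subseteq[C;B]=\mathbb{P}^4$, again giving (i).

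The core case is when $[C;B]$ is a non-linear irreducible $4$-dimensional cone with $P$ in its vertex. Applying Terracini (Lemma~\ref{lemT}), for generic $c\in C$ and $b\in B$,
\[
P\in T_s([C;B])=\langle T_c(C)\cup T_b(B)\rangle.
\]
If $P\in T_b(B)$ for generic $b$, then Remark~\ref{remconi} (applied with $L=\{P\}$) gives that $B$ is a cone with vertex $P$ or a plane through $P$, which is (ii). Otherwise $B$ is neither, and I would conclude by projecting from $P$. The projection $\pi_P:\mathbb{P}^n\dashrightarrow\mathbb{P}^{n-1}$ sends $B$ to a surface $B'$ (since $B$ is not a cone with vertex $P$ nor a plane through $P$) and sends $C$ to a curve $C'$ that is not a line (otherwise $C$ would lie in a plane through $P$, forcing $A$ to be a plane, a contradiction with $A$ being a cone). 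Because $[C;B]$ is a cone with vertex $P$,
\[
\dim[C';B']=\dim\pi_P([C;B])=\dim[C;B]-1=3.
\]
Lemma~\ref{lemcursup}(ii) then forces $\langle C'\cup B'\rangle\simeq\mathbb{P}^3$; pulling back via $\pi_P$ shows both $C$ and $B$ lie inside $\pi_P^{-1}(\mathbb{P}^3)\simeq\mathbb{P}^4$, which contains $P$. Hence $A=[P;C]\subseteq\mathbb{P}^4$ and $\langle A\cup B\rangle\subseteq\mathbb{P}^4$, so (i) holds.

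The hard step is the projection argument: one must verify that $\pi_P([C;B])=[C';B']$ on a dense open subset, that the dimension drops by exactly one (crucially using the cone hypothesis at $P$, so that the fibres of $\pi_P$ restricted to $[C;B]$ are the rulings from $P$), and that the images $C'$, $B'$ retain the properties needed to invoke Lemma~\ref{lemcursup}. The final lift through $\pi_P^{-1}$ must then be carried out carefully to place $A$ together with $B$ inside the appropriate $\mathbb{P}^4$.
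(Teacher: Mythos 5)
Your proposal is correct, and after the shared opening reduction it genuinely diverges from the paper's argument in the decisive case. Both proofs begin the same way: $[A;B]=[P;[C;B]]$ by Lemma \ref{lemJ}, so failure of the formula forces $P\in Vert([C;B])$, and the cases $\dim ([C;B])\in \{2,3\}$, together with the case where $[C;B]$ is a linear $\p^{4}$, are disposed of through Lemma \ref{lemcursup} essentially as you do. In the remaining case ($[C;B]$ a non-linear fourfold which is a cone through $P$) the paper proves the implication in the direction ``$\dim (\left\langle A\cup B\right\rangle )\geq 5$ implies $ii)$'': it studies the family of planes $\left\langle P\cup c\cup b\right\rangle$ for generic $c\in C$, $b\in B$, extracts plane curves $B_{c}\subset B$, shows these must be lines, and finally that all of them pass through $P$, so $B$ is a cone with vertex $P$. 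You instead prove the contrapositive ``not $ii)$ implies $i)$'': excluding $ii)$ makes $\pi _{P}$ generically finite on $B$, the cone hypothesis makes every fibre of $\pi _{P}$ on $[C;B]$ a ruling line, so $[C';B']=\overline{\pi _{P}([C;B])}$ has dimension exactly $3$, and Lemma \ref{lemcursup} applied to $C'$ (not a line, precisely because $A$ is not a plane) and to the surface $B'$ places everything, after pulling back the resulting $\p^{3}$ through $P$, inside a $\p^{4}$ containing $A\cup B$. Your route is shorter and reuses Lemma \ref{lemcursup} where the paper carries out a delicate synthetic count; its only real burden is the point you flag yourself — that $\pi _{P}$ carries $[C;B]$ onto $[C';B']$ with one-dimensional fibres — and that verification goes through (the fibres are whole punctured lines through $P$ since $P\in Vert([C;B])$, and generic joining lines of $C$ and $B$ miss $P$ because $B\nsubseteq A$). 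One cosmetic remark: the appeal to Terracini is superfluous, since $P\in \left\langle T_{c}(C)\cup T_{b}(B)\right\rangle$ is never used; your dichotomy is simply on whether $P\in T_{b}(B)$ for generic $b$, which together with Remark \ref{remconi} is all you need.
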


\begin{proof}
Note that $C$ is not a line as $A$ is not a plane. By Lemma \ref{lemJ}, $5)$
we have $[A;B]$ $=[[P;C];B]=[P;[C;B]]$ which is a cone over $[C;B]$ having
vertex $P.$ If $\dim ([P;[C;B]])=1+\dim ([C;B])$ we are done. If not, we
have $\dim ([P;[C;B]])$ $=\dim ([C;B])$. Hence $[P;[C;B]]$ $=[C;B]$ because $%
[P;[C;B]]\supseteq [B;C]$ and they are irreducible with the same dimension.
In this case we have $P\in Vert([C;B])$ by Proposition 1.3 of \cite{a1}.

If $\dim ([C;B])=2,$ by Lemma \ref{lemcursup} we know that $%
Vert([C;B])=[C;B]=B$ is a plane, but this is a contradiction as $P\in
Vert([C;B])$ and $A$ is not a plane. Assume $\dim ([C;B])=3,$ Lemma \ref
{lemcursup} gives that $Vert([C;B])=$ $[C;B]\simeq \Bbb{P}^{3}$ . Hence $%
A=[P;C]\subset [C;B]\simeq \Bbb{P}^{3}$ and we are in case $i).$

We can assume that $\dim ([C;B])=4$. Hence $\dim ([A;B])=\dim ([P;[C;B]])$ $%
=\dim ([C;B])=4.$ If $\dim (\left\langle A\cup B\right\rangle )=4$ we are in
case $i),$ otherwise $\dim (\left\langle A\cup B\right\rangle )\geq 5.$

Now let us consider generic pairs of points $c\in C$ and $b\in B.$ As $%
[P;[C;B]]$ $=[C;B]$ we have, for generic $(c,b)\in C\times B,$ the union $%
\bigcup\limits_{c\in C,b\in B}(\left\langle P\cup c\cup b\right\rangle )$ is
contained in $[C;B]$ and has dimension $4,$ i.e. $[C;B]=$ $\overline{%
\bigcup\limits_{c\in C,b\in B,generic}(\left\langle P\cup c\cup
b\right\rangle )}.$ If, for generic $(c,b)\in C\times B,$ $\dim
(\left\langle P\cup c\cup b\right\rangle )=1,$ then the lines $\left\langle
P\cup b\right\rangle $ are contained in $A=[P;C]$ for any generic $b\in B,$
it would imply $B\subseteq A:$ contradiction. Hence $\dim (\left\langle
P\cup c\cup b\right\rangle )=2$ for generic $(c,b)\in C\times B.$ As $\dim
([C;B])=4$ to have $\bigcup\limits_{c\in C,b\in B,generic}(\left\langle
P\cup c\cup b\right\rangle )$ of dimension $4,$ necessarily $\left\langle
P\cup c\cup b\right\rangle =\left\langle P\cup c^{\prime }\cup b^{\prime
}\right\rangle $ for infinitely many $(c^{\prime },b^{\prime })\in C\times
B. $ Let us fix a generic pair $(\overline{c},\overline{b}),$ it is not
possible that infinitely many points $c^{\prime }\in C$ belong to $%
\left\langle P\cup \overline{c}\cup \overline{b}\right\rangle ,$ otherwise $%
C $ would be a plane curve and $A$ would be a plane, so there is only a
finite number of points $\overline{c^{\prime }}\in C\cap \left\langle P\cup 
\overline{c}\cup \overline{b}\right\rangle $. Let us choose one of them;
there exist infinitely many points $b^{\prime }\in B$ such that $%
\left\langle P\cup \overline{c}\cup \overline{b}\right\rangle =\left\langle
P\cup \overline{c^{\prime }}\cup b^{\prime }\right\rangle $. Hence there
exists at least one plane curve $B_{\overline{c}}\subset B,$ corresponding
to $\overline{c}$, such that $\left\langle P\cup \overline{c}\cup \overline{b%
}\right\rangle =\left\langle P\cup \overline{c^{\prime }}\cup B_{\overline{c}
}\right\rangle $ $=\left\langle P\cup \overline{c}\cup B_{\overline{c}
}\right\rangle .$ As $\overline{c}$ $\in C$ was a generic point, we can say
that, for any generic point $c\in C,$ there exists a plane curve $%
B_{c}\subset B$ such that, for generic $(c,b)\in C\times B,$ $\left\langle
P\cup c\cup b\right\rangle =\left\langle P\cup c\cup B_{c}\right\rangle .$
If, for generic $c\in C,$ $B_{c}$ is not a line we have $[C;B]=\overline{
\bigcup\limits_{c\in C,b\in B,generic}(\left\langle P\cup c\cup
b\right\rangle )}=\overline{\bigcup\limits_{c\in C,generic}(\left\langle
P\cup c\cup B_{c}\right\rangle )}$ and $\dim \{\overline{\bigcup\limits_{c%
\in C,generic}(\left\langle P\cup c\cup B_{c}\right\rangle )}\}\leq 3,$
because $\left\langle P\cup c\cup B_{c}\right\rangle =\left\langle
B_{c}\right\rangle $ and the set of plane curves $\{B_{c}|c$ generic, $c\in
C\}$ would determine a family of planes of dimension at most $1$. But this
is not possible as $\dim ([B;C])=4,$ then $B_{c}$ must be a line for generic 
$c\in C$ and $B=$ $\overline{\bigcup\limits_{c\in C,generic}(B_{c})}.$

Note that $[C;B]$ must contain $\bigcup\limits_{\overline{c}\in C,fixed,c\in
C,generic}(\left\langle P\cup \overline{c}\cup B_{c}\right\rangle $ for any
generic point $\overline{c}\in C:$ if $[C;B]$ would contain only $%
\bigcup\limits_{c\in C,generic}(\left\langle P\cup c\cup B_{c}\right\rangle $
it would have dimension at most $3.$ Moreover it is not possible that the
lines $\{B_{c}|c$ generic, $c\in C\}$ cut the generic line $\left\langle
P\cup \overline{c}\right\rangle \subset A$ at different points, otherwise $%
A\subset B$. Hence they cut $\left\langle P\cup \overline{c}\right\rangle $
at one point $P(\overline{c})$ and all lines $\{B_{c}|c$ generic, $c\in C\}$
pass through $P(\overline{c}).$ By letting $\overline{c}$ vary in $C$ we get
a contradiction unless $P(\overline{c})=P$ (or $B$ is a plane cutting a
curve on $A,$ but we are assuming $\dim (\left\langle A\cup B\right\rangle
)\geq 5).$ Hence $B$ is covered by lines passing through $P$ and we are in
case $ii).$
\end{proof}

\begin{proposition}
\label{primo passo} Let $V=A\cup B$ be the union of two irreducible surfaces
in $\Bbb{P}^{n}$ such that $\dim [Sec(V)]\leq 4$ and $\dim (\left\langle
V\right\rangle )\geq 5.$ Then:

$i)$ $B$ is the tangent plane at a point $P\in A_{reg}$ and $A$ is a
Veronese surface in $\left\langle A\right\rangle \simeq \Bbb{P}^{5}$ (or
viceversa), in this case $\dim [Sec(A\cup B)]=4;$

$ii)$ $A$ and $B$ are cones having the same vertex;

$iii)$ $A$ is a cone of vertex $P$ and $B$ is a plane passing through $P;$

$iv)$ $A$ is a surface, not a cone, such that $\left\langle A\right\rangle
\simeq \Bbb{P}^{4}$ and such that $A$ is contained in a $3$-dimensional cone
having a line $l$ as vertex, $B$ is a plane such that $B\cap \left\langle
A\right\rangle =l.$
\end{proposition}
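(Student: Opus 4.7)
The plan is to apply Corollary~\ref{corovvio} to get $\dim[Sec(A)]\leq 4$, $\dim[Sec(B)]\leq 4$, and $\dim[A;B]\leq 4$, and then use Lemma~\ref{lemVeronese} to classify each of $A$, $B$ into one of three types: (a) a surface contained in a $\mathbb{P}^{\leq 4}$, (b) a Veronese surface in its $\mathbb{P}^{5}$-span, or (c) a cone. The whole case analysis is driven by the standing hypotheses $m:=\dim\langle V\rangle\geq 5$ and $\dim[A;B]\leq 4$. If either of $A$, $B$ is Veronese, Proposition~\ref{propYB} immediately forces the other to be a tangent plane to it, giving case $(i)$; from here on I assume neither component is Veronese.

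Next I would handle the case in which one of $A$, $B$, say $A$, is a plane. If $B$ is a cone of vertex $P$, I apply Lemma~\ref{lemovvio} with $B$ playing the role of the cone; exception $(i)$ of that lemma is forbidden by $m\geq 5$, and since $A$ is a plane (not a cone) we are pushed into exception $(ii)$: $A$ is a plane through $P$, which is case $(iii)$. If instead $B$ is not a cone, then $\dim\langle B\rangle\leq 4$. A short dimension bookkeeping on $A\cap\langle B\rangle$, using Lemma~\ref{lemJ1} to translate $\dim[A;B]\leq 4$ into ``$T_{Q}(B)\cap A\neq\emptyset$ generically'', forces $\dim\langle B\rangle=4$ and $l:=A\cap\langle B\rangle$ to be a line: empty intersection gives $\dim[A;B]=5$; a single point of intersection would have to lie in every $T_{Q}(B)$, making $B$ a cone; and $\dim\langle B\rangle=3$ together with $m\geq 5$ leaves no compatible configuration. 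For a generic $Q\in B_{reg}$ the point $T_{Q}(B)\cap A$ lies on $l$ and cannot be stationary (again, else $B$ would be a cone), so $T_{Q}(B)\cap l\neq\emptyset$ generically. Hence $\dim[l;B]\leq 3$; since $\langle l\cup B\rangle=\langle B\rangle\simeq\mathbb{P}^{4}\not\simeq\mathbb{P}^{3}$, Lemma~\ref{lemretta}~$ii)$ places $B$ inside a $3$-dimensional cone with vertex $l$, which is case $(iv)$ after renaming $A\leftrightarrow B$.

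Finally, suppose neither component is a plane. If both $A$, $B$ are cones, I apply Lemma~\ref{lemovvio} with $A=[P_{A};C]$: exception $(i)$ is excluded by $m\geq 5$, and if the formula $\dim[A;B]=1+\dim[C;B]$ held then $\dim[C;B]\leq 3$ would force $\langle C\cup B\rangle\simeq\mathbb{P}^{3}$ by Lemma~\ref{lemcursup}, hence $\dim\langle A\cup B\rangle\leq 4$, a contradiction. So exception $(ii)$ must hold, and since $B$ is not a plane it is a cone through $P_{A}$; as the vertex of a surface-cone is a single point, the two vertices coincide, giving case $(ii)$. The ``mixed'' situation (exactly one of the two a cone) is excluded by the very same Lemma~\ref{lemovvio} argument, since exception $(ii)$ would then force the non-cone component to be a cone or a plane. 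If neither is a cone, both spans have dimension at most $4$, and I run through Lemmas~\ref{lemJ2} and~\ref{lemJ3} by cases on $\dim(\langle A\rangle\cap\langle B\rangle)$: each subcase compatible with $m\geq 5$ and $\dim[A;B]\leq 4$ forces ``$A$ and $B$ are cones with a common vertex'', contradicting the standing assumption and closing this branch.

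The hardest step is the non-cone-plus-plane case leading to $(iv)$: ruling out that $T_{Q}(B)\cap A$ stays fixed as $Q$ varies (which would force $B$ to be a cone) is what tells me that these intersection points actually trace out a fixed line $l\subset\langle A\rangle\cap\langle B\rangle$, at which point Lemma~\ref{lemretta} finishes the job.
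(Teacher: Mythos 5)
Your proposal is correct and follows the paper's global strategy (reduce each component to the types of Lemma \ref{lemVeronese}, settle the Veronese case by Proposition \ref{propYB}, treat cones via Lemma \ref{lemovvio}, and close the no-cone branch with Lemmas \ref{lemJ2} and \ref{lemJ3}), but two local steps are genuinely different from the paper's, and both are arguably cleaner. In the two-cones case the paper computes $\dim ([A;B])=2+\dim ([C;C^{\prime }])\geq 5$ through the join identities of Lemma \ref{lemJ}, whereas you run the dichotomy of Lemma \ref{lemovvio} directly and then identify the vertices by uniqueness of the vertex of a surface-cone; the paper's extra case ``$C,C^{\prime }$ coplanar'' disappears from your version. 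More substantially, in the plane-plus-non-cone case leading to $iv)$ the paper projects the surface from the line $l$ onto a generic plane and argues that the differential of this projection has rank at most one, so the image is a curve and the surface lies on the cone over it; you instead note that $T_{Q}(B)\cap l\neq \emptyset $ generically, deduce $\dim ([l;B])\leq 3$ from Terracini, and invoke Lemma \ref{lemretta} $ii)$, which (since $\left\langle l\cup B\right\rangle =\left\langle B\right\rangle \simeq \Bbb{P}^{4}$, not $\Bbb{P}^{3}$, and $B$ is not a plane) forces $B$ into a $3$-dimensional cone with vertex $l$; reusing a stated lemma instead of the ad hoc differential argument is a genuine simplification. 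One place where you are too quick: in the ``plane $A$, cone $B$ with vertex $P$'' step, the fact that $A$ is a plane only selects the second alternative inside exception $ii)$ of Lemma \ref{lemovvio}; it does not by itself exclude the main branch of that lemma, namely $\dim ([A;B])=1+\dim ([C^{\prime };A])$. You must still kill that branch, which is immediate: it would give $\dim ([C^{\prime };A])\leq 3$, hence $\dim (\left\langle C^{\prime }\cup A\right\rangle )\leq 3$ by Lemma \ref{lemcursup} (the base $C^{\prime }$ is not a line), hence $\dim (\left\langle A\cup B\right\rangle )\leq 4$, contradicting $m\geq 5$ --- exactly the maneuver you already perform in your two-cones and mixed cases, so this is a one-line patch rather than a real gap.
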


\begin{proof}
Obviously if $\dim [Sec(A\cup B)]\leq 4$ we have $\dim [Sec(A)]\leq 4$ and $%
\dim [Sec(B)]\leq 4,$ so that, for both $A$ and $B,$ one of the conditions $%
i),$ $ii),$ $iii)$ of Lemma \ref{lemVeronese} holds.

If $A$ (or $B)$ is a Veronese surface, Proposition \ref{propYB} tells us
that we are in case $i).$ From now on we can assume that neither $A$ nor $B$
is a Veronese surface.

Let us assume that $A$ is a cone of vertex $P,$ over an irreducible curve $%
C. $ If $B$ is a cone of vertex $P$ we are in case $ii).$ Let us assume that 
$B$ is a cone of vertex $P^{\prime }\neq P,$ over an irreducible curve $%
C^{\prime }$, we can assume that $P^{\prime }\notin C$ by changing $C$ if
necessary. By Lemma \ref{lemovvio} and Lemma \ref{lemJ}, $5),$ we have: $%
\dim ([A;B])=1+\dim ([C;[C^{\prime };P^{\prime }]])=1+\dim ([[C;C^{\prime
}];P^{\prime }]=2+\dim ([C;C^{\prime }]\geq 5$ unless $C$ and $C^{\prime }$
are plane curves lying on the same plane (see Lemma \ref{lemcurve}), but in
this case $\dim (\left\langle A=[P;C]\right\rangle )\leq 3,\dim
(\left\langle B=[P^{\prime };C^{\prime }]\right\rangle )\leq 3$ and $\dim
(\left\langle A\cup B\right\rangle )\leq 4.$

Hence we can assume that $B$ is not a cone and therefore $\dim (\left\langle
B\right\rangle )\leq 4$ by Lemma \ref{lemVeronese}. If $B$ is a plane
passing through $P$ we are in case $iii),$ in all other cases we have $\dim
([A;B])=1+\dim ([C;B])\leq 4$ by Lemma \ref{lemovvio}. Hence $\dim
([C;B])\leq 3.$ By Lemma \ref{lemcursup} we know that, in this case, $\dim
(\left\langle C\cup B\right\rangle )\leq 3$ and this is not possible,
otherwise $\dim (\left\langle A\cup B\right\rangle )\leq 4.$

By the above arguments we can assume that $A$ is not a cone. For the same
reason we can also assume that $B$ is not a cone. Hence, by Lemma \ref
{lemVeronese} we have $\dim (\left\langle A\right\rangle )\leq 4$ and $\dim
(\left\langle B\right\rangle )\leq 4$ and $-1$ $\leq \dim (\left\langle
A\right\rangle \cap \left\langle B\right\rangle )\leq 3.$ If neither $A$ nor 
$B$ is a plane, by Lemma \ref{lemJ2}, we have $\dim (\left\langle
A\right\rangle \cap \left\langle B\right\rangle )=3.$ This implies that $%
\dim (\left\langle A\right\rangle )=\dim (\left\langle B\right\rangle )=4,$
otherwise we would have $\left\langle A\right\rangle \subseteq \left\langle
B\right\rangle $ (or $\left\langle A\right\rangle \supseteq \left\langle
B\right\rangle $) and this is not possible as $\dim (\left\langle
A\right\rangle \cup \left\langle B\right\rangle )=\dim (\left\langle A\cup
B\right\rangle )\geq 5.$ Then we can apply Lemma \ref{lemJ3} and we are done.

Hence we can assume that $B,$ for instance, is a plane, $\dim (\left\langle
B\right\rangle )=\dim (B)=2$ and $\dim (\left\langle A\right\rangle )\leq 4.$
If $\dim (\left\langle A\right\rangle )=2,$ $A$ is a plane and it is not
possible that $\dim (\left\langle A\cup B\right\rangle )\geq 5$ and $\dim
([A;B])\leq 4.$ If $\dim (\left\langle A\right\rangle )=3$ we have $%
\left\langle A\right\rangle \cap B$ is a point $R$ as $\dim (\left\langle
A\cup B\right\rangle )=\dim (\left\langle \left\langle A\right\rangle \cup
B\right\rangle )\geq 5,$ then for any point $P\in A_{reg},$ $T_{P}(A)$
passes through $R,$ because $T_{P}(A)$ $\cap B\neq \emptyset $ by Lemma \ref
{lemJ1} $ii)$. Hence $A$ would be a cone with vertex $R$ and this is not
possible$.$ If $\dim (\left\langle A\right\rangle )=4$ we have $\left\langle
A\right\rangle \cap B$ is a line $l,$ as $\dim (\left\langle A\cup
B\right\rangle )=\dim (\left\langle \left\langle A\right\rangle \cup
B\right\rangle )\geq 5,$ and for any generic point $P\in A_{reg},$ $%
T_{P}(A)\cap l\neq \emptyset $ by arguing as above. Let us choose a generic
plane $\Pi \subset \left\langle A\right\rangle $ and let us consider the
rational map $\varphi :A--->\Pi $ given by the projection from $l.$ $\varphi 
$ cannot be constant, because $A$ is not a plane, on the other hand the rank
of the differential of $\varphi $ is at most one by the assumption on $%
T_{P}(A),$ $P\in A_{reg}.$ Hence $\overline{\func{Im}(\varphi )}$ is a plane
curve $\Gamma $ and $A$ is contained in the $3$-dimensional cone generated
by the planes $\left\langle l\cup Q\right\rangle $, where $Q$ is any point
of $\Gamma .$ We get case $iv).$
\end{proof}

\begin{remark}
\label{remfond} Lemma \ref{lemVeronese} and Proposition \ref{primo passo}
give the proof of Theorem \ref{teoint}.
\end{remark}

\section{Surfaces having at least three irreducible components}

In this section we complete the classification of $J$-embeddable surfaces $V$
. By Corollary \ref{corjoin} this is equivalent to assume that $\dim
[Sec(V)]\leq 4$ and by Theorem \ref{teoint} we can assume that $V=V_{1}\cup
...\cup V_{r}$ has at least three irreducible components $V_{i}$ . As any
surface $V$ is $J$-embeddable if $\dim (\left\langle V\right\rangle )\leq 4$
we will always assume that $\dim (\left\langle V\right\rangle )\geq 5.$ Note
that $V$ is $J$-embeddable if and only if $\dim ([V_{i};V_{j}])\leq 4$ for
any $i,j=1,...,r,$ by Corollary \ref{corovvio}.

\begin{lemma}
\label{lem3span5} Let $V=V_{1}\cup ...\cup V_{r}$, $r\geq 3,$ be a reducible
surface in $\Bbb{P}^{n}$ such that $\dim [Sec(V)]\leq 4.$ Assume that there
exists an irreducible component, say $V_{1},$ for which $\dim (\left\langle
V_{1}\cup V_{j}\right\rangle )\geq 5$ for any $j=2,...,r,$ then we have only
one of the following cases:

$i)$ $V_{1}$ is a Veronese surface and the other components are tangent
planes to $V_{1}$ at different points;

$ii)$ $V_{1}$ is a cone, with vertex a point $P,$ and every $V_{j},$ $j\geq
2,$ is a plane passing through $P$ or a cone having vertex at $P;$

$iii)$ $V_{1}$ is a surface, not a cone, such that $\dim (\left\langle
V_{1}\right\rangle )=4$ and $V_{2},...,V_{r}$ are planes as in case $s=1$ of
example \ref{e1bis}.
\end{lemma}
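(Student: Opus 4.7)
The plan is to combine Corollary \ref{corovvio} with Proposition \ref{primo passo}. Corollary \ref{corovvio} gives $\dim[V_1;V_j]\le 4$ for each $j\ge 2$; together with the hypothesis $\dim\langle V_1\cup V_j\rangle\ge 5$, this means every pair $V_1\cup V_j$ satisfies the assumptions of Proposition \ref{primo passo} and so falls into one of its four configurations. I will then case-split on the intrinsic type of $V_1$: since $V_1$ inherits $\dim[Sec(V_1)]\le 4$, Lemma \ref{lemVeronese} forces $V_1$ to be either a Veronese surface, a cone, or a surface with $\dim\langle V_1\rangle\le 4$.

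If $V_1$ is a Veronese surface, the remaining three configurations of Proposition \ref{primo passo} would force $V_1$ to be a cone, a plane through a cone's vertex, or a non-cone of $\mathbb{P}^4$-span, none of which fit a Veronese in $\mathbb{P}^5$. So only case i) of Proposition \ref{primo passo} survives and every $V_j$ is a tangent plane to $V_1$; distinctness of the components forces distinct tangency points, giving case i) of the Lemma. If instead $V_1$ is a cone with vertex a point $P$, cases i) and iv) of Proposition \ref{primo passo} are incompatible with a cone, while cases ii) and iii) yield exactly case ii) of the Lemma (each $V_j$ is a cone with the same vertex $P$ or a plane through $P$).

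In the last situation $V_1$ is neither a Veronese nor a cone, so $\dim\langle V_1\rangle\le 4$ by Lemma \ref{lemVeronese}. Assuming $V_1$ is not a plane, cases i), ii), iii) of Proposition \ref{primo passo} all fail for each pair $V_1\cup V_j$, since they would respectively require $V_1$ to be a Veronese, a cone, or a cone/plane through a vertex. Thus only case iv) is available: $V_1$ plays the role of the non-cone surface of $\mathbb{P}^4$-span contained in a $3$-dimensional cone with line vertex $l_j$, and $V_j$ is a plane meeting $\langle V_1\rangle$ along $l_j$. Letting $j$ vary reproduces exactly Example \ref{e1bis} with $s=1$, yielding case iii) of the Lemma.

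The main obstacle is excluding the sub-case in which the distinguished $V_1$ is itself a plane; a priori Proposition \ref{primo passo} would then admit three patterns for $V_j$ (a Veronese with $V_1$ as tangent plane, a cone with vertex on $V_1$, or the case-iv non-cone surface meeting $V_1$ along a line). To handle this I would invoke Proposition \ref{primo passo} a second time on the pairs $V_j\cup V_k$ with $j,k\ge 2$ together with the assumption $r\ge 3$, showing that these admissible patterns cannot simultaneously satisfy $\dim[V_j;V_k]\le 4$ and $\dim\langle V_1\cup V_j\rangle\ge 5$ without reducing, after relabelling the distinguished component, to one of cases i)--iii) already treated.
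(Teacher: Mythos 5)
Your main line is the same as the paper's: since $\dim [Sec(V)]\leq 4$ gives $\dim [Sec(V_{1}\cup V_{j})]\leq 4$, you apply Proposition \ref{primo passo} to every pair $(V_{1},V_{j})$ and sort the outcome according to whether $V_{1}$ is a Veronese surface, a cone, or a non-cone with $4$-dimensional span; that part is correct and is exactly how the paper argues.

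The genuine gap is the sub-case you flag and postpone, namely when the distinguished component $V_{1}$ is itself a plane, i.e.\ when $V_{1}$ plays the role of $B$ rather than $A$ in cases $i)$, $iii)$, $iv)$ of Proposition \ref{primo passo}; and your sketched fix cannot be carried out as described. The Veronese pattern is indeed excluded by $r\geq 3$ (if $V_{1}$ were a tangent plane to a Veronese $V_{2}$, Proposition \ref{propYB} forces every other $V_{k}$ to be another tangent plane, and two tangent planes of a Veronese meet, so $\dim (\left\langle V_{1}\cup V_{k}\right\rangle )=4$, against the hypothesis). But the cone pattern survives: take two cones $V_{2},V_{3}$ with distinct vertices $P\neq Q$ whose $3$-dimensional spans lie in a common $\Bbb{P}^{4}$ (so $\dim ([V_{2};V_{3}])\leq 4$ automatically), and let $V_{1}$ be a plane through $P$ and $Q$ meeting $\left\langle V_{2}\right\rangle $ only at $P$ and $\left\langle V_{3}\right\rangle $ only at $Q$. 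Then $\dim (\left\langle V_{1}\cup V_{j}\right\rangle )=5$ for $j=2,3$, while $\dim ([V_{1};V_{j}])\leq 4$ because every tangent plane of $V_{j}$ contains its vertex, which lies on $V_{1}$; so all hypotheses of the Lemma hold with the plane as distinguished component, yet the configuration is none of $i)$--$iii)$, and no relabelling restores the hypothesis since $\dim (\left\langle V_{2}\cup V_{3}\right\rangle )=4$. So the reduction ``after relabelling'' you propose is not available; the plane position for $V_{1}$ has to be ruled out at the level of the statement rather than by a pairwise argument. (The paper's own proof silently does this by always placing $V_{1}$ in the Veronese/cone/$\Bbb{P}^{4}$-span role, and it only invokes the Lemma, in Corollary \ref{cor3span5}, when $\dim (\left\langle V_{1}\right\rangle )\geq 5$, where $V_{1}$ cannot be a plane.) As a self-contained proof of the Lemma as stated, your final paragraph is therefore a step that would fail, not a detail left to the reader.
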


\begin{proof}
Let us consider $V_{1}$ and $V_{2}.$ By assumption $\dim [Sec(V_{1}\cup
V_{2})]\leq 4$ and $\dim (\left\langle V_{1}\cup V_{2}\right\rangle )\geq 5.$
By Proposition \ref{primo passo} we know that one possibility is that $V_{1}$
is a Veronese surface and $V_{2}$ is a tangent plane to $V_{1}.$ In this
case let us look at the pairs $V_{1},V_{j},$ $j\geq 3$; we can argue
analogously and we have $i).$

In the other two possibilities $ii)$ and $iii)$ of Proposition \ref{primo
passo} for $V_{1}$ and $V_{2}$ we can assume that $V_{1}$ is a cone of
vertex $P.$ Now, by looking at the pairs $V_{1},V_{j},$ $j\geq 3$ and by
applying Proposition \ref{primo passo} to any pair, we have $ii).$

In the last case of Proposition \ref{primo passo} we can assume that $V_{1}$
is a surface, not a cone, such that $\dim (\left\langle V_{1}\right\rangle
)=4.$ By looking at the pairs $V_{1},V_{j},$ $j\geq 2$ and by applying
Proposition \ref{primo passo} to any pair, we have any $V_{j}$, $j\geq 2,$
is a plane cutting $\left\langle V_{1}\right\rangle $ along a line $l_{j}$
which is the vertex of some $3$-dimensional cone $E_{j}\subset \left\langle
V_{1}\right\rangle ,$ $E_{j}\supset $ $V_{1}.$ Hence $V$ is a surface as $X$
in case $s=1$ of Example \ref{e1bis}.
\end{proof}

\begin{corollary}
\label{cor3span5} Let $V=V_{1}\cup ...\cup V_{r}$, $r\geq 3,$ be a reducible
surface in $\Bbb{P}^{n}$ such that $\dim [Sec(V)]\leq 4.$ Assume that there
exists an irreducible component, say $V_{1},$ for which $\dim (\left\langle
V_{1}\right\rangle )\geq 5.$ Then we have case $i)$ or case $ii)$ of Lemma 
\ref{lem3span5}.
\end{corollary}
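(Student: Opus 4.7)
The plan is to deduce this corollary directly from Lemma \ref{lem3span5} by verifying its hypothesis and then ruling out one of its three cases. The hypothesis we need is that $\dim(\langle V_1 \cup V_j\rangle) \geq 5$ for every $j = 2,\ldots,r$. But this is automatic under our stronger assumption: since $\langle V_1\rangle \subseteq \langle V_1 \cup V_j\rangle$, we have $\dim(\langle V_1 \cup V_j\rangle) \geq \dim(\langle V_1\rangle) \geq 5$ for every $j \geq 2$. So the hypothesis of Lemma \ref{lem3span5} is met, and one of the three conclusions $i)$, $ii)$, $iii)$ of that lemma must hold.

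Next, I would eliminate case $iii)$. In that case the description explicitly requires $\dim(\langle V_1\rangle) = 4$, which contradicts our assumption $\dim(\langle V_1\rangle) \geq 5$. On the other hand, case $i)$ (Veronese) is consistent because the Veronese surface spans a $\mathbb{P}^5$, and case $ii)$ (cone with vertex a point) imposes no upper bound on $\dim(\langle V_1\rangle)$, so both remain possible. Hence $V$ falls in case $i)$ or case $ii)$ of Lemma \ref{lem3span5}, which is the desired conclusion.

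There is essentially no obstacle here: the content has already been done in Lemma \ref{lem3span5}, and the corollary is just the observation that the assumption $\dim(\langle V_1\rangle) \geq 5$ both implies the hypothesis of that lemma (trivially, by monotonicity of linear span under inclusion) and rules out the one conclusion inconsistent with a $\geq 5$-dimensional span of $V_1$.
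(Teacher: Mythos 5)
Your proof is correct and follows exactly the paper's argument: the assumption $\dim(\langle V_{1}\rangle)\geq 5$ forces $\dim(\langle V_{1}\cup V_{j}\rangle)\geq 5$ for all $j$, so Lemma \ref{lem3span5} applies, and its case $iii)$ is excluded because it requires $\dim(\langle V_{1}\rangle)=4$. Nothing is missing.
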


\begin{proof}
As $\dim (\left\langle V_{1}\right\rangle )\geq 5$ we have $\dim
(\left\langle V_{1}\cup V_{j}\right\rangle )\geq 5$ for any $j=2,...,r,$ so
we can apply Lemma \ref{lem3span5}, obviously case $iii)$ cannot occur.
\end{proof}

From now on we can assume that, if $V=V_{1}\cup ...\cup V_{r}$, $r\geq 3,$
is a $J$-embeddable, reducible surface in $\Bbb{P}^{n}$, i.e. $\dim
[Sec(V)]\leq 4,$ then $\dim (\left\langle V_{i}\right\rangle )\leq 4$ for $%
i=1,...,r. $

Let us consider in the following theorems the case in which there exists at
least a component having a $4$-dimensional span.

\begin{theorem}
\label{teocon4nuovo} Let $V=V_{1}\cup ...\cup V_{r}$, $r\geq 3,$ be a
reducible surface in $\Bbb{P}^{n}$ such that $\dim [Sec(V)]\leq 4$ and $\dim
(\left\langle V\right\rangle )\geq 5.$ Assume that $\dim (\left\langle
V_{i}\right\rangle )\leq 4$ for $i=1,...,r$ and that there exists a
component, say $V_{1},$ such that $\dim (\left\langle V_{1}\right\rangle )=4$
and $V_{1}$ is a surface, not a cone, contained in a $3$-dimensional cone $%
E_{2}\subset $ $\left\langle V_{1}\right\rangle $ having a line $l_{2}$ as
vertex. Then:

$i)$ if $E_{2}$ is the unique $3$-dimensional cone having a line as vertex
and containing $V_{1},$ then $V$ is the union of $V_{1},$ planes of $\Bbb{P}%
^{n}$ cutting $\left\langle V_{1}\right\rangle $ along $l_{2},$ cones in $%
\left\langle V_{1}\right\rangle $ whose vertex belongs to $l_{2},$ planes in 
$\left\langle V_{1}\right\rangle $ intersecting $l_{2},$ surfaces in $%
\left\langle V_{1}\right\rangle $ contained in $3$-dimensional cones having $%
l_{2}$ as vertex;

$ii)$ if there exist other cones as $E_{2}$, say $E_{3},...,E_{k},$ with
lines $l_{3},...,l_{k}$ as vertices, then $V$ is the union of $V_{1}$, other
surfaces contained in $E_{2}\cap ...\cap E_{k}$ (if any), planes pairwise
intersecting and cutting $\left\langle V_{1}\right\rangle $ along at least
some line $l_{j},$ cones in $\left\langle V_{1}\right\rangle $ having vertex
belonging to $l_{2}\cap ...\cap l_{k}$ (if not empty), planes in $%
\left\langle V_{1}\right\rangle $ intersecting $l_{2}\cap ...\cap l_{k}$ (if
not empty).
\end{theorem}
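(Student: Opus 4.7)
The plan is to apply Corollary \ref{corovvio} pair by pair, splitting the components $V_j$ ($j\geq 2$) according to whether $V_j\subset\langle V_1\rangle$ or not; Proposition \ref{primo passo} will prescribe the shape of the components outside $\langle V_1\rangle$, while refinements of the pairwise constraints $\dim([V_i;V_j])\leq 4$ yield the rest.

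First, take any $V_j$ with $V_j\not\subset\langle V_1\rangle$: since then $\dim(\langle V_1\cup V_j\rangle)\geq 5$, Proposition \ref{primo passo} applies to $(V_1,V_j)$. The hypotheses on $V_1$ (not a cone, not a Veronese since $\dim\langle V_1\rangle=4$, not a plane) rule out cases $i)$--$iii)$ of that proposition, leaving only case $iv)$: $V_j$ is a plane meeting $\langle V_1\rangle$ along a line $l$ that is the vertex of some $3$-dimensional cone containing $V_1$. Uniqueness of $E_2$ in case $i)$ forces $l=l_2$; in case $ii)$ we only have $l\in\{l_2,\ldots,l_k\}$. For two such planes $V_j,V_h$, Lemma \ref{lemJ}(1) gives $[V_j;V_h]=\langle V_j\cup V_h\rangle$, so $\dim([V_j;V_h])\leq 4$ is equivalent to $V_j\cap V_h\neq\emptyset$: automatic in case $i)$ (both planes contain $l_2$), and the pairwise-intersection condition of case $ii)$.

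Next, examine the components $V_j\subset\langle V_1\rangle$ with $V_j\neq V_1$. All joins with other components lying in $\langle V_1\rangle\simeq\mathbb{P}^4$ have dimension at most $4$ automatically, so the only nontrivial conditions are the joins with each plane $V_k$ from the previous step, meeting $\langle V_1\rangle$ along some $l_i$. By Lemma \ref{lemJ1} applied to $(V_j,V_k)$, the inequality $\dim([V_j;V_k])\leq 4$ is equivalent to $T_P(V_j)\cap V_k\neq\emptyset$ at a generic $P\in V_j$, hence to $T_P(V_j)\cap l_i\neq\emptyset$. Combining Remark \ref{remconi} with Lemma \ref{lemretta}(ii)--(iii), this forces $V_j$, with respect to each single $l_i$, to be one of: a plane meeting $l_i$, a cone with vertex at a point of $l_i$, or a non-plane non-cone surface contained in a $3$-dimensional cone with vertex $l_i$. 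In case $i)$ this is exactly the list asserted; in case $ii)$ the condition must hold simultaneously for every $i\in\{2,\ldots,k\}$, whence cone vertices must lie in $l_2\cap\cdots\cap l_k$, planes must meet $l_2\cap\cdots\cap l_k$, and the remaining surfaces must sit in $E_2\cap\cdots\cap E_k$.

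The main obstacle is the synthesis in case $ii)$: \emph{a priori} a single surface $V_j\subset\langle V_1\rangle$ could realise different alternatives of Lemma \ref{lemretta} against different $l_i$ (for instance, be a plane meeting $l_2$ yet contained in a $3$-dimensional cone with vertex $l_3$), and one must rule out such mixed behaviour so that the trichotomy collapses to the three uniform global types stated. The rigidity comes from Remark \ref{remconi} together with a careful tangential analysis at a generic point: a cone whose tangent planes meet every $l_i$ has its vertex in $\bigcap_i l_i$, and a non-plane non-cone surface whose generic tangent planes meet every $l_i$ is forced inside $E_2\cap\cdots\cap E_k$. The converse direction --- that each configuration in the stated list actually satisfies $\dim[Sec(V)]\leq 4$ --- follows from Example \ref{e1bis} or a direct verification via Corollary \ref{corovvio}.
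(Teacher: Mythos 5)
Your proposal follows essentially the same route as the paper's proof: Proposition \ref{primo passo} applied to the pairs $(V_1,V_j)$ with $V_j\not\subset\langle V_1\rangle$ forces those components to be planes cutting $\langle V_1\rangle$ along vertex lines of $3$-dimensional cones containing $V_1$, and the components inside $\langle V_1\rangle$ are then constrained by the Terracini-type tangency condition against those lines, with the split between $i)$ and $ii)$ governed by the uniqueness of $E_2$. If anything you are slightly more explicit than the paper (the plane--plane intersection check via Lemma \ref{lemJ}, and the line-by-line analysis in case $ii)$), so the argument matches the paper's proof.
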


\begin{proof}
Note that it is not possible that $\dim (\left\langle V_{1}\cup
V_{j}\right\rangle )\leq 4$ for all $j=2,...,r,$ otherwise $\dim
(\left\langle V\right\rangle )=4$, then there exists at least a component,
say $V_{2},$ such that $\dim (\left\langle V_{1}\cup V_{2}\right\rangle
)\geq 5.$ By applying Proposition \ref{primo passo} to $V_{1}$ and $V_{2}$
we have $V_{2}$ is a plane cutting $\left\langle V_{1}\right\rangle $ along $%
l_{2}.$ Let us consider $V_{j},$ $j\geq 3.$

If $\dim (\left\langle V_{1}\cup V_{j}\right\rangle )\geq 5$ then, by
Proposition \ref{primo passo}, $V_{j}$ is a plane cutting $\left\langle
V_{1}\right\rangle $ along a line $l_{j}$ which is the vertex of some $3$
-dimensional cone $E_{j}\subset $ $\left\langle V_{1}\right\rangle $, $%
E_{j}\supset V_{1}.$

If $\dim (\left\langle V_{1}\cup V_{j}\right\rangle )\leq 4$ then $%
V_{j}\subset \left\langle V_{1}\right\rangle ;$ in this case, to get $\dim
([V_{j};V_{2}])\leq 4$, it must be $T_{P}(V_{j})\cap l_{2}\neq \emptyset $
for any point $P\in (V_{j})_{reg}$ (recall that $V_{2}$ is a plane). Hence,
either $V_{j}$ is a cone whose vertex belong to $l_{2}$, or $V_{j}$ is a
plane intersecting $l_{2}$ or $V_{j}$ is a surface contained in some $3$%
-dimensional cone having $l_{2}$ as vertex.

Now, if $E_{2}$ is the unique cone of its type containing $V_{1},$ then $V$
is as in case $i),$ otherwise we are in case $ii).$
\end{proof}

\begin{theorem}
\label{teocon4} Let $V=V_{1}\cup ...\cup V_{r}$, $r\geq 3,$ be a reducible
surface in $\Bbb{P}^{n}$ such that $\dim [Sec(V)]\leq 4$ and $\dim
(\left\langle V\right\rangle )\geq 5.$ Assume that $\dim (\left\langle
V_{i}\right\rangle )\leq 4$ for $i=1,...,r$ and that there exists a
component, say $V_{1},$ such that $\dim (\left\langle V_{1}\right\rangle )=4$
and $V_{1}$ is not a surface (not a cone) contained in a $3$-dimensional
cone $E\subset $ $\left\langle V_{1}\right\rangle $ having a line as vertex.
Then we have only the following possibilities, all of them obviously
existing:

$i)$ $V$ is the union of cones having as vertex the same point $P$ and
planes passing through $P;$

$ii)$ there exists a flag $P\subset H=K_{1}\cap K_{2}$ where $H\simeq \Bbb{P}%
^{3}$ and $K_{1}\simeq K_{2}\simeq \Bbb{P}^{4};$ $V$ is the union of
surfaces contained in $H$, cones having vertex $P$ and spanning $K_{i},$
cones having vertex $P$ whose $3$-dimensional span cuts a plane on $H,$
planes passing through $P$ and cutting a line on $H$.

$iii)$ there exists a flag $P\subset H\subset K=\left\langle A\right\rangle $
where $H\simeq \Bbb{P}^{3}$ and $K\simeq \Bbb{P}^{4}$ is spanned by a cone $%
A $ having vertex at $P;$ $V$ is the union of surfaces in $H,$ cones in $K$
having vertex at $P,$ planes in $K$ passing through $P$, cones with vertex
at $P$ whose $3$-dimensional span cuts a plane on $H,$ planes passing
through $P$ and cutting a line on $H;$

$iv)$ there exists a flag $P\subset H\subset K=\left\langle A\right\rangle $
where $H\simeq \Bbb{P}^{2}$ and $K\simeq \Bbb{P}^{4}$ is spanned by a cone $%
A $ having vertex at $P;V$ is the union of surfaces in $K$ having a $3$
-dimensional span containing $H,$ planes in $K$ cutting a line on $H,$ cones
in $K$ having vertex at $P,$ planes in $K$ passing through $P,$ cones with
vertex at $P$ whose $3$-dimensional span contain $H$ and planes passing
through $P$ and cutting a line on $H;$

$v)$ there exists a flag $P\subset H\subset K=\left\langle A\right\rangle $
where $H\simeq \Bbb{P}^{1}$ and $K\simeq \Bbb{P}^{4}$ is spanned by a cone $%
A $ having vertex at $P;V$ is the union of surfaces in $K$ having a $3$
-dimensional span containing $H,$ cones in $K$ having vertex on $H,$ planes
in $K$ intersecting $H$ and planes containing $H.$
\end{theorem}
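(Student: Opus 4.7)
The plan is to proceed in three stages: first pin down what $V_{1}$ must be; second determine the structure of each other component by pairing it with $V_{1}$; third perform a case split according to the global configuration.

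First, I would argue that $V_{1}$ is necessarily a cone with vertex a point $P$. Since $\dim (\langle V_{1}\rangle )=4$ and $\dim [Sec(V_{1})]\leq 4$, Lemma \ref{lemVeronese} leaves two possibilities: $V_{1}$ is a cone, or $V_{1}$ is a non-cone surface spanning $\Bbb{P}^{4}$. Suppose $V_{1}$ were of the second kind; by the standing hypothesis of the theorem, $V_{1}$ is not contained in any $3$-dimensional cone with a line as vertex (otherwise Theorem \ref{teocon4nuovo} would apply). Since $\dim (\langle V\rangle )\geq 5$, there is at least one $V_{j}$ with $\dim (\langle V_{1}\cup V_{j}\rangle )\geq 5$. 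Applying Proposition \ref{primo passo} to the pair $(V_{1},V_{j})$, each of the cases $i)$--$iv)$ is incompatible with our hypotheses on $V_{1}$: $i)$ requires a Veronese in $\Bbb{P}^{5}$; $ii)$ requires $V_{1}$ to be a cone; $iii)$ requires $V_{1}$ to be either a cone or a plane; $iv)$ would require $V_{1}$ to lie in a $3$-dimensional cone with line vertex. This contradiction forces $V_{1}$ to be a cone, with vertex a point $P$.

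Second, I would classify each component $V_{j}$ with $j\geq 2$. For those with $\dim (\langle V_{1}\cup V_{j}\rangle )\geq 5$, Proposition \ref{primo passo} --- with cases $i)$ and $iv)$ excluded since $V_{1}$ is a cone of vertex $P$ --- forces $V_{j}$ to be either another cone with vertex $P$ or a plane through $P$. For those with $\dim (\langle V_{1}\cup V_{j}\rangle )\leq 4$, i.e.\ $V_{j}\subseteq \langle V_{1}\rangle $, the join $[V_{1};V_{j}]$ is automatically contained in the $4$-dimensional $\langle V_{1}\rangle $, so the only non-trivial constraints on such interior $V_{j}$ come from pairings with ``exterior'' components. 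Applying Terracini (Lemma \ref{lemT}) to these pairings, a generic tangent plane of an interior $V_{j}$ must meet the span of each exterior component, and since each exterior component passes through $P$, either $V_{j}$ has its tangent planes pass through $P$ or they are forced to meet a common linear subspace of $\langle V_{1}\rangle $.

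Third, I would perform the case split. If every component is a cone with vertex $P$ or a plane through $P$, we are in case $i)$. Otherwise there is at least one component of neither type, and I let $H$ be the linear subspace of $\langle V_{1}\rangle $ cut out as the intersection of $\langle V_{1}\rangle $ with the spans of the exterior components (or, when a second exterior cone $V_{2}$ satisfies $\dim (\langle V_{2}\rangle )=4$, as $\langle V_{1}\rangle \cap \langle V_{2}\rangle $). The dimension of $H$ distinguishes the remaining cases: two $4$-dimensional exterior cones $V_{1},V_{2}$ with $\dim (H)=3$ yield case $ii)$; a single $K=\langle V_{1}\rangle $ with $\dim (H)=3,2,1$ yields respectively cases $iii),iv),v)$. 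For each case, the families of components listed in the conclusion are exactly those whose pairwise joins with every other listed component remain $4$-dimensional, and this is verified using Lemmas \ref{lemJ1}, \ref{lemJ2}, \ref{lemJ3} together with Lemma \ref{lemT}.

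The main obstacle will be this last verification: one must show, for each of the five conclusion cases, that the listed families exhaust all compatible configurations, so that no extra component (a cone with vertex off $P$, a plane not cutting the prescribed line on $H$, or a non-cone surface failing the span condition) can coexist with the rest. This amounts to a lengthy but mechanical bookkeeping of Terracini constraints; the only genuinely subtle step is identifying $H$ correctly when the exterior components all have $3$-dimensional span, since then $H$ is determined not by direct intersection of spans but by the common locus where the tangent planes of interior non-cone components are forced to meet.
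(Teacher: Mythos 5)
Your first two stages are essentially the paper's opening move: since $\dim (\left\langle V\right\rangle )\geq 5$ some pair $(V_{1},V_{j})$ has joint span of dimension $\geq 5$, and Proposition \ref{primo passo} (with the Veronese and the ``cone over a line-vertex cone'' cases excluded by hypothesis) forces $V_{1}$ to be a cone with vertex a point $P$ and every ``exterior'' component to be a cone with vertex $P$ or a plane through $P$. Up to there the proposal is sound. The gap is in Stage 3, which is where the theorem actually lives, and it is not the ``mechanical bookkeeping'' you describe. First, your $H$ is the wrong object: except in case $ii)$ (where an exterior cone $V_{2}$ with $\dim (\left\langle V_{2}\right\rangle )=4$ exists and $H=\left\langle V_{1}\right\rangle \cap \left\langle V_{2}\right\rangle$), the relevant subspace is the \emph{span of the traces} $\left\langle V_{j}\right\rangle \cap \left\langle V_{1}\right\rangle$ of the exterior components, not the intersection of $\left\langle V_{1}\right\rangle$ with their spans. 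For instance, with two exterior planes meeting $\left\langle V_{1}\right\rangle$ along distinct lines through $P$, the intersection of spans is just $P$, while the correct $H$ is the plane spanned by the two lines; the constraint on an interior non-cone surface $B$ with $3$-dimensional span is $\left\langle B\right\rangle \supseteq H$ (because $\dim (\left\langle B\cup V_{j}\right\rangle )\leq 4$ must hold for every exterior $V_{j}$, or else Proposition \ref{primo passo} forces $B$ to be a cone), and it is exactly the value $\dim (H)=4,3,2,1$ that separates cases $i),iii),iv),v)$.

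Second, several structural dichotomies that produce the five lists are missing and cannot be recovered by generic Terracini arguments of the kind you sketch (``tangent planes meet the span of each exterior component'' is strictly weaker than what is needed; the paper instead applies Proposition \ref{primo passo} to each pair (interior, exterior) whenever their joint span has dimension $\geq 5$). In particular: (a) if some exterior component meets $\left\langle V_{1}\right\rangle$ only at $P$ (or is a cone whose span meets $\left\langle V_{1}\right\rangle$ in a line or a point), every interior component is forced to be a cone with vertex $P$ or a plane through $P$, i.e.\ one falls back to case $i)$; (b) case $ii)$ requires showing that \emph{all} exterior cones with $4$-dimensional span cut $\left\langle V_{1}\right\rangle$ along the same $H$, and that any interior non-cone surface or non-$P$ plane must lie in that $H$; (c) case $v)$ only arises when \emph{all} exterior components are planes and all their traces coincide with a single line $l$, and then interior cones may have vertex anywhere on $l$, not just at $P$ --- a phenomenon your scheme, which only distinguishes $\dim (H)$, does not detect. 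So the proposal correctly reduces to the right local dichotomy but leaves unproved precisely the combination argument that yields the flags and the exact component lists in $i)$--$v)$.
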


\begin{proof}
Note that it is not possible that $\dim (\left\langle V_{1}\cup
V_{j}\right\rangle )\leq 4$ for all $j=2,...,r,$ otherwise $\dim
(\left\langle V\right\rangle )=4$, then there exists at least a component,
say $V_{2},$ such that $\dim (\left\langle V_{1}\cup V_{2}\right\rangle
)\geq 5.$ By applying Proposition \ref{primo passo} to $V_{1}$ and $V_{2}$
in our assumptions, we have $V_{1}$ is a cone of vertex a point $P$ and $%
V_{2}$ is another cone of vertex $P$ or a plane passing through $P.$

\textbf{Case 1. }Let us assume that there exists another component, say $%
V_{2},$ in $V$ such that $\dim (\left\langle V_{1}\cup V_{2}\right\rangle
)\geq 5$ and $V_{2}$ is a cone of vertex $P.$ Let us put $A=A_{1}:=V_{1}$
and $A^{\prime }:=V_{2}.$ Let us call $A_{j}$ all the components of $V$ such
that $\dim (\left\langle A\cup A_{j}\right\rangle )\geq 5;$ we know that $%
A^{\prime }$ is one of these components$.$ By Proposition \ref{primo passo}
we have any $A_{j}$ is a cone of vertex $P$ or a plane passing through $P.$
We have to consider many possibilities:

$A_{j}^{3}:$ cones such that $\dim (\left\langle A_{j}^{3}\right\rangle \cap
\left\langle A\right\rangle )=3$ and $\dim (\left\langle
A_{j}^{3}\right\rangle )=4$

$A_{j}^{2}:$ cones such that $\dim (\left\langle A_{j}^{2}\right\rangle \cap
\left\langle A\right\rangle )=2$ and $\dim (\left\langle
A_{j}^{2}\right\rangle )=3$ or $4$

$A_{j}^{1}:$ cones such that $\dim (\left\langle A_{j}^{1}\right\rangle \cap
\left\langle A\right\rangle )=1$ and $\dim (\left\langle
A_{j}^{1}\right\rangle )=3$ or $4$

$A_{j}^{0}:$ cones such that $\dim (\left\langle A_{j}^{0}\right\rangle \cap
\left\langle A\right\rangle )=0$ and $\dim (\left\langle
A_{j}^{0}\right\rangle )=3$ or $4,$ in this case the intersection is $P$

$A_{j}^{1p}:$ planes such that $\dim (\left\langle A_{j}^{1p}\right\rangle
\cap \left\langle A\right\rangle )=1$

$A_{j}^{0p}:$ planes such that $\dim (\left\langle A_{j}^{0p}\right\rangle
\cap \left\langle A\right\rangle )=0,$ in this case the intersection is $P.$

Let us call $B_{1},...,B_{t}$ the other components of $V$ such that $\dim
(\left\langle A\cup B_{i}\right\rangle )\leq 4.$

As $\dim (\left\langle A\right\rangle )=4$ we have every $B_{i}\subset
\left\langle A\right\rangle $ and the following possibilities:

$B_{i}^{4}:$ surfaces in $\left\langle A\right\rangle $ such that $%
\left\langle B_{i}^{4}\right\rangle =\left\langle A\right\rangle $

$B_{i}^{3}:$ surfaces in $\left\langle A\right\rangle $ such that $\dim
(\left\langle B_{i}^{3}\right\rangle )=3$

$B_{i}^{2}:$ planes in $\left\langle A\right\rangle $.

A priori, these are the only possibilities for the components of $V.$ Let us
examine when the condition $\dim ([V_{i};V_{j}])\leq 4$ is fulfilled for any 
$i,j$. We know that there exists at least a cone of type $A^{q},$ $%
q=0,1,2,3, $ i.e. $A^{\prime }$ and, of course, the previous condition is
fulfilled for any pair of cones of type $A^{q}$ and planes of type $A^{qp},$ 
$q=0,1$ and for any pair of surfaces $B_{i}$ because all such surfaces are
in $\left\langle A\right\rangle \simeq \Bbb{P}^{4}$. We have only to check $%
\dim ([B_{i};A_{j}]).$

If there exists in $V$ a component $B_{i}$ of type $B^{4}$ then we have $%
\dim (\left\langle B_{i}^{4}\cup A^{\prime }\right\rangle )=\dim
(\left\langle A\cup A^{\prime }\right\rangle )\geq 5,$ so that, by
Proposition \ref{primo passo}, we have $B_{i}^{4}$ is a cone of vertex $P$
and $V$ can contain any number of such cones.

If $V$ contains a cone of type $A^{0},$ or a cone of type $A^{1}$, or a cone
of type $A^{2}$ having a $4$-dimensional span, or a plane of type $A^{0p},$
it is easy to see that every surface $B_{i}^{3}$ must be a cone with vertex
at $P$ because in all these cases $\dim (\left\langle B_{i}^{3}\cup (\text{%
above type of cone and plane)}\right\rangle $ $\geq 5$ and we use
Proposition \ref{primo passo}. Moreover, in all these cases it is easy to
see that every plane $B_{i}^{2}$ (if any) has to pass through $P$. Hence, in
all these cases, we have $i).$ From now on let us assume that $V$ does not
contain cones or planes of the above types.

Now let us distinguish two subcases.

\underline{Firstly}, let us assume that $A^{\prime }$ is of type $A^{3}$.
Hence $\dim (\left\langle A^{\prime }\right\rangle )=4.$

If there exists in $V$ a component $B_{i}^{3}$ let us consider $\dim
(\left\langle B_{i}^{3}\cup A^{\prime }\right\rangle ).$

If $\dim (\left\langle B_{i}^{3}\cup A^{\prime }\right\rangle )\geq 5,$
then, by Proposition \ref{primo passo}, $B_{i}^{3}$ is a cone of vertex $P$
and $V$ can contain any number of such cones. If $\dim (\left\langle
B_{i}^{3}\cup A^{\prime }\right\rangle )\leq 4$, then $\dim (\left\langle
B_{i}^{3}\cup A^{\prime }\right\rangle )=4$ because $\dim (\left\langle
A^{\prime }\right\rangle )=4.$ Hence $\left\langle B_{i}^{3}\right\rangle
=\left\langle A\right\rangle \cap \left\langle A^{\prime }\right\rangle $
for any $i.$

If there exists in $V$ a plane $B_{i}^{2}$ let us consider $\dim
(\left\langle B_{i}^{2}\cup A^{\prime }\right\rangle ).$ If $\dim
(\left\langle B_{i}^{2}\cup A^{\prime }\right\rangle )\geq 5,$ then, by
Proposition \ref{primo passo}, $B_{i}^{2}$ passes through $P$ and $V$ can
contain any number of such planes. If $\dim (\left\langle B_{i}^{2}\cup
A^{\prime }\right\rangle )\leq 4$, then $\dim (\left\langle B_{i}^{2}\cup
A^{\prime }\right\rangle )=4$ because $\dim (\left\langle A^{\prime
}\right\rangle )=4.$ Hence $B_{i}^{2}\subset \left\langle A\right\rangle
\cap \left\langle A^{\prime }\right\rangle $ for any $i.$

The previous remarks prove that in both cases in which in $V$ there exists a
component $B_{i}^{3},$ not a cone with vertex $P,$ or in $V$ there exists a
component $B_{i}^{2},$ not passing through $P,$ the $3$-dimensional linear
space $\left\langle B_{i}^{3}\right\rangle ,$ or $\left\langle B_{i}^{2}\cup
P\right\rangle ,$ is the intersection $\left\langle A\right\rangle \cap
\left\langle A_{j}^{3}\right\rangle $ for any component $A_{j}^{3}$ (recall
that one of them is $A^{\prime }$).

Therefore, in this subcase, $V$ can have many components of type $A^{3}$,
all of these components cut $\left\langle A\right\rangle $ along the same,
fixed, $3$-dimensional space $H=\left\langle A\right\rangle \cap
\left\langle A^{\prime }\right\rangle $ and there can be any number of
surfaces $B_{i}^{3}$ and planes $B_{i}^{2}$ in $H$. $V$ can also contain
cones of type $A^{2},$ but for any surface $B_{i}^{3}$ not a cone with
vertex at $P,$ it must be $\dim (\left\langle B_{i}^{3}\cup
A_{j}^{2}\right\rangle )\leq 4$ and $B_{i}^{3}\subset H$. Hence $%
\left\langle A_{j}^{2}\right\rangle \cap \left\langle A\right\rangle =H.$ $V$
can also contain planes of type $A^{1p},$ but for any surface $B_{i}^{3}$
not a cone with vertex at $P,$ it must be $\dim (\left\langle B_{i}^{3}\cup
A^{1p}\right\rangle )\leq 4$ and $B_{i}^{3}\subset H$. Hence $\left\langle
A^{1p}\right\rangle \cap \left\langle A\right\rangle \subset H.$ Note that,
under these conditions, every plane $B_{i}^{2}\subset H,$ is such that $\dim
(\left\langle B_{i}^{2}\cup A_{j}^{2}\right\rangle )\leq 4$ and $\dim
(\left\langle B_{i}^{2}\cup A^{1p}\right\rangle )\leq 4.$ So we have case $%
ii).$

\underline{Secondly}, let us assume that in $V$ there are not cones of type $%
A^{3}$ and that $A^{\prime }$ is of type $A^{2}.$ Let us consider the linear
span $H$ in $\left\langle A\right\rangle $ given by the planes $\left\langle
A_{j}^{2}\right\rangle \cap \left\langle A\right\rangle $ and lines $%
\left\langle A_{j}^{1p}\right\rangle \cap \left\langle A\right\rangle .$ As $%
A^{\prime }$ is of type $A^{2}$ we have $4\geq \dim (H)\geq 2.$ Note that
for every surface $B_{i}^{3}$ not a cone with vertex at $P$, it must be $%
\left\langle B_{i}^{3}\right\rangle \supset \left\langle
A_{j}^{2}\right\rangle \cap \left\langle A\right\rangle $ and $\left\langle
B_{i}^{3}\right\rangle \supset \left\langle A_{j}^{1p}\right\rangle \cap
\left\langle A\right\rangle ;$ for every plane $B_{i}^{2}$ not passing
through $P,$ it must be $\dim (B_{i}^{2}\cap \left\langle
A_{j}^{2}\right\rangle \cap \left\langle A\right\rangle )\geq 1$ and $\dim
(B_{i}^{2}\cap \left\langle A_{j}^{1p}\right\rangle \cap \left\langle
A\right\rangle )\geq 0.$

If $\dim (H)=4,$ i.e. $H=\left\langle A\right\rangle ,$ $V$ cannot contain
surfaces $B_{i}^{3}$ not cones with vertex at $P$ and $V$ cannot contain
planes $B_{i}^{2}$ not passing through $P.$ So we are in case $i).$

If $\dim (H)=3,$ $V$ can contain surfaces $B_{i}^{3}$ $\subset H,$ not cones
with vertex at $P,$ and planes $B_{i}^{2}\subset H$ not passing through $P.$
So we are in case $iii).$

If $\dim (H)=2,$ $V$ can contain surfaces $B_{i}^{3},$ not cones with vertex
at $P,$ if $\left\langle B_{i}^{3}\right\rangle \supset H$ and $V$ can
contain planes $B_{i}^{2}$ not passing through $P$ if $\dim (B_{i}^{2}\cap
H)\geq 1.$ So we are in case $iv).$

\textbf{Case 2. }Let us assume that $V$ does not contain a component $V_{i}$
, $i\geq 2,$ such that $\dim (\left\langle V_{1}\cup V_{i}\right\rangle
)\geq 5$ and $V_{i}$ is a cone of vertex $P.$ Hence all other components $%
V_{i}$ in $V$ such that $\dim (\left\langle V_{1}\cup V_{i}\right\rangle
)\geq 5$ are planes passing through $P,$ and one of them must exist.

Let us put $A=A_{1}:=V_{1}$ and let $A^{\prime }$ be an other component of $%
V $ which is a plane of vertex $P$ such that $\dim (\left\langle A\cup
A^{\prime }\right\rangle )\geq 5.$ Let us call $A_{j}$ all the planes of $V$
such that $\dim (\left\langle A\cup A_{j}\right\rangle )\geq 5;$ we know
that $A^{\prime }$ is one of these planes$.$ We have to consider these
possibilities:

$A_{j}^{1p}:$ planes such that $\dim (\left\langle A_{j}^{1p}\right\rangle
\cap \left\langle A\right\rangle )=1$

$A_{j}^{0p}:$ planes such that $\dim (\left\langle A_{j}^{0p}\right\rangle
\cap \left\langle A\right\rangle )=0,$ in this case the intersection is $P.$

Let us call $B_{1},...,B_{t}$ the other components of $V$ such that $\dim
(\left\langle A\cup B_{i}\right\rangle )\leq 4.$

As $\dim (\left\langle A\right\rangle )=4$ we have every $B_{i}\in
\left\langle A\right\rangle $ and the following possibilities:

$B_{i}^{4}:$ surfaces in $\left\langle A\right\rangle $ such that $%
\left\langle B_{i}^{4}\right\rangle =\left\langle A\right\rangle $

$B_{i}^{3}:$ surfaces in $\left\langle A\right\rangle $ such that $\dim
(\left\langle B_{i}^{3}\right\rangle )=3$

$B_{i}^{2}:$ planes in $\left\langle A\right\rangle $.

A priori, these are the only possibilities for the components of $V.$ Let us
examine when the condition $\dim ([V_{i};V_{j}])\leq 4$ is fulfilled for any 
$i,j$. We know that there exists at least a plane of type $A^{qp},$ $q=0,1,$
i.e. $A^{\prime }$ and, as in Case 1, we have only to check $\dim
([B_{i};A_{j}]).$

If in $V$ there exists at least a plane $A^{\prime }$ of type $A^{0p}$
intersecting $\left\langle A\right\rangle $ only at $P$ we can argue as in
Case 1 e conclude that surfaces $B_{i}^{4}$ and $B_{i}^{3}$ must be cones of
vertex $P$ and planes $B_{i}^{2}$ passes through $P$. $V$ can contain any
number of such cones and planes and we have case $i)$ again.

Now let us assume that all planes $A_{j}$, $j\geq 2,$ are of type $A^{1p},$
intersecting $\left\langle A\right\rangle $ along a line $l_{j}$ passing
through $P.$ Let $H$ be the linear subspace of $\left\langle A\right\rangle $
generated by the lines $\{l_{j}\},$ we have: $4\geq \dim (H)\geq 1.$

If there exists in $V$ a component $B_{i}^{4}$ then we have $\dim
(\left\langle B_{i}^{4}\cup A_{j}\right\rangle )=\dim (\left\langle A\cup
A_{j}\right\rangle )=5,$ so that, by Proposition \ref{primo passo}, we have $%
B_{i}^{4}$ is a cone whose vertex belong to $l_{j}.$ If $\dim (H)\geq 2$, $%
B_{i}^{4}$ is a cone of vertex $P,$ if $\dim (H)=1,$ i.e. all the lines $%
\{l_{j}\}$ coincide with a line $l,$ then $B_{i}^{4}$ is a cone whose vertex
belongs to $l.$ In both cases $V$ can contain any number of such cones.

If there exists in $V$ a component $B_{i}^{3}$ then we have if $\dim
(\left\langle B_{i}^{3}\cup A_{j}\right\rangle )\geq 5$ then $B_{i}^{3}$ is
a cone of vertex $P$ and $V$ can contain any number of such cones; on the
other hand $\dim (\left\langle B_{i}^{3}\cup A_{j}\right\rangle )\leq 4$ if
and only if $\left\langle B_{i}^{3}\right\rangle \supset l_{j}.$

Hence, if $\dim (H)=4$ it is not possible that $\left\langle
B_{i}^{3}\right\rangle \supset l_{j}$ for any $j$; if $\dim (H)=3$ this is
possible if and only if $\left\langle B_{i}^{3}\right\rangle =H$ for any
component $B_{i}^{3}$; if $\dim (H)\leq 2$ this is possible if and only if $%
\left\langle B_{i}^{3}\right\rangle \supset H$ for any component $B_{i}^{3}$
; in both cases $V$ can contain any number of such surfaces.

If there exists in $V$ a plane $B_{i}^{2}$ it must intersect any line $%
l_{j}. $ If $\dim (H)=4$ this is not possible unless the plane passes
through $P$; if $\dim (H)=3$ this is possible if and only if $%
B_{i}^{2}\subset H$; if $\dim (H)=2$ this is possible if and only if $%
B_{i}^{2}$ intersects $H$ along a line; if $\dim (H)=1$ this is possible if
and only if $B_{i}^{2}$ intersects the only line $l=l_{j}$; in all cases $V$
can contain any number of such planes, possibly passing through $P.$

In conclusion we have cases $i),iii),iv),v),$ respectively, according to the
dimension of $H:4,3,2,1,$ (of course, in the description of $iii)$ and $iv),$
cones of type $A^{2}$ must be removed).
\end{proof}

From now on we can assume that, if $V=V_{1}\cup ...\cup V_{r}$, $r\geq 3,$
is a $J$-embeddable, reducible surface in $\Bbb{P}^{n}$, i.e. $\dim
[Sec(V)]\leq 4,$ then $\dim (\left\langle V_{i}\right\rangle )\leq 3$ for $%
i=1,...,r. $

\begin{theorem}
\label{teolungo} Let $V=V_{1}\cup ...\cup V_{r}$, $r\geq 3,$ be a $J$%
-embeddable, reducible surface in $\Bbb{P}^{n}$, i.e. $\dim [Sec(V)]\leq 4,$ 
$\dim (\left\langle V\right\rangle )\geq 5,$ such that: $\dim (\left\langle
V_{i}\right\rangle )\leq 3$ for $i=1,...,r$ and there exists at least a
pair, $V_{\overline{i}},V_{\overline{j}}$ such that $\dim (\left\langle V_{%
\overline{i}}\right\rangle )=3$ and $\dim (\left\langle V_{\overline{i}%
}\right\rangle \cup \left\langle V_{\overline{j}}\right\rangle )\geq 5.$
Then:

$i)$ $V$ is an union of cones having the same vertex $P$ and possibly planes
passing through $P;$

$ii)$ $V$ is an union of cones having vertex at two different points $P$ and 
$Q,$ such that the linear spans of each pair of cones with different vertex
intersect along a plane and, possibly, surfaces having $3$-dimensional spans
cutting a plane along the linear span of every cone and, possibly, of planes
passing through the line $l:=$ $PQ$ or intersecting each other, intersecting 
$l$ and cutting a line along any other $3$-dimensional $\left\langle
V_{i}\right\rangle ;$

$iii)$ $V$ is an union of cones having vertex at a same point $P$, surfaces
having $3$-dimensional spans cutting a plane along the linear span of every
cone and passing through a fixed line $l\supset P$ and, possibly, of planes
intersecting each other, intersecting $l$ and cutting a line along any other 
$3$-dimensional $\left\langle V_{i}\right\rangle ,$ (if the planes pass
through $P$ the condition holds only for surfaces which are not cones);

$iv)$ $V$ is an union of a plane $A^{\prime }$ passing through a point $P,$
cones having vertex at $P$ whose linear spans contain a fixed plane $\pi
\supset P$, surfaces $V_{k}$ having $3$-dimensional spans containing $\pi $
and cutting a line along $A^{\prime }$ and, possibly, of planes intersecting 
$A^{\prime },$ intersecting each other and cutting a line along any other $3$
-dimensional $\left\langle V_{k}\right\rangle ,$ (if the planes pass through 
$P$ the condition holds only for surfaces which are not cones);

$v)$ $V$ is an union of a plane $A^{\prime }$ passing through a point $P,$
cones having vertex on $A^{\prime }$ and whose linear spans contain a fixed
plane $\pi $, surfaces $V_{k}$ whose linear spans contains $\pi $ and cuts $%
A^{\prime }$ along a line (only if $P\in \pi )$ and, possibly, of planes
intersecting $A^{\prime },$ intersecting each other and cutting a line along
any other $3$-dimensional $\left\langle V_{k}\right\rangle $ (or passing
through all the involved vertices);

$vi)$ $V$ is the union of cones having the same vertex $P$ and the same $3$%
-dimensional linear span $H$, of a plane $A^{\prime }$ intersecting $H$ only
at $P$ and, possibly, of planes passing through $P$ or intersecting $%
A^{\prime }$ and cutting a line on $H$ and intersecting each other.

All described cases can occur.
\end{theorem}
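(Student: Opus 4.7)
The plan is to bootstrap from the pivot pair $(V_{\bar{i}}, V_{\bar{j}})$ guaranteed by the hypothesis, using Proposition \ref{primo passo} as the main structural tool and Lemmas \ref{lemJ1}--\ref{lemJ3} to glue incidences between the remaining components. Since $\dim\langle V_{\bar{i}}\rangle = 3$, the component $V_{\bar{i}}$ is not a plane; together with the standing assumption $\dim\langle V_k\rangle\leq 3$ for every $k$, this rules out cases $i)$ (Veronese) and $iv)$ (surface of span $4$) of Proposition \ref{primo passo}. Hence either both $V_{\bar{i}}$ and $V_{\bar{j}}$ are cones with a common vertex $P$, or $V_{\bar{i}}$ is a cone with vertex $P$ and $V_{\bar{j}}$ is a plane through $P$. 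Either way I fix $V_1:=V_{\bar{i}}$ and the apex $P$, and designate this the pivot configuration.

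Next I would classify every remaining component $V_k$ by separating the indices into those with $\dim\langle V_1\cup V_k\rangle\geq 5$ and those with $\dim\langle V_1\cup V_k\rangle\leq 4$. For the former, Proposition \ref{primo passo} again forces $V_k$ to be a cone with vertex $P$ or a plane through $P$. For the latter, Lemma \ref{lemJ2} (or Lemma \ref{lemJ3} when $\dim\langle V_k\rangle=3$) constrains $\langle V_k\rangle$ to meet $\langle V_1\rangle$ in a prescribed way. I would then iterate, swapping the pivot for another identified $3$-span cone $V_m$; if $V_m$ has a vertex $Q\neq P$, then Lemma \ref{lemJ2} applied to $(V_1, V_m)$ forces $\langle V_1\rangle\cap\langle V_m\rangle$ to be a plane and places $P,Q$ on a common line $l$. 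This produces the dichotomy ``one apex versus two apices'', which is the backbone distinguishing cases $i)$--$ii)$ from $iii)$--$vi)$. Non-cone surfaces $V_k$ of span $3$ and the remaining planes are then fitted by further applications of Lemmas \ref{lemJ2}--\ref{lemJ3}, which force them to contain or to intersect in prescribed dimensions the auxiliary loci $\pi$, $l$, $H$ or $A'$ that have been assembled.

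The final step is to organize the resulting configurations into the six listed cases. A single apex with no exceptional non-cone of span $3$ yields $i)$; two apices yield $ii)$; the remaining possibilities arise when at least one non-cone surface of span $3$ coexists with the cones and forces a fixed plane $\pi$ (and possibly a further plane $A'$) to emerge as a common incidence locus, the subdivision among $iii)$, $iv)$, $v)$ being dictated by the presence of $A'$ and by the dimension of $A'\cap\pi$; case $vi)$ is the degenerate subcase in which all cones share not only the vertex $P$ but also a single $3$-dimensional span $H$, together with a plane $A'$ meeting $H$ only at $P$.

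The main obstacle will be the bookkeeping: there are many potential apex, plane, and span configurations, and for each one must check both that the dimension bound $\dim([V_i;V_j])\leq 4$ is satisfied by \emph{every} pair of components and that no further kind of component can be added. I would tame this by mirroring the organizational strategy of Theorem \ref{teocon4}: first isolate the ``big-join'' components with the full strength of Proposition \ref{primo passo}, then fit the ``small-join'' components into the resulting scaffold using Lemmas \ref{lemJ1}--\ref{lemJ3} in a case-by-case analysis parameterized by the dimension of the linear subspace generated inside $\langle V_1\rangle$ by all forced incidence loci, and finally verify that the six listed families exhaust all compatible configurations and that each of them actually occurs, e.g.\ by producing explicit models in $\p^n$.
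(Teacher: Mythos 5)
Your opening move is the same as the paper's: since $\dim\langle V_{\overline i}\rangle=3$ and all spans are at most $3$-dimensional, Proposition \ref{primo passo} applied to the pivot pair leaves only ``two cones with a common vertex $P$'' or ``cone with vertex $P$ plus a plane through $P$'', and one then fits the remaining components around this configuration. But as it stands the proposal has a genuine gap exactly where the theorem is hardest: the control of the possible vertices. You assert that if another $3$-span cone $V_m$ has vertex $Q\neq P$ then Lemma \ref{lemJ2} ``places $P,Q$ on a common line $l$'' and that this yields the dichotomy one apex versus two apices. Lemma \ref{lemJ2} only tells you that $\langle V_1\rangle\cap\langle V_m\rangle$ is a plane (equivalently $\dim\langle V_1\cup V_m\rangle\le 4$); it does not locate $Q$ on any distinguished line, and it gives no reason why there cannot be three or more pairwise distinct vertices, nor why, when a second vertex exists, the whole configuration collapses into the very constrained shape of case $ii)$. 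The paper settles precisely this point by a separate argument: in the case $\dim(\langle A\rangle\cap\langle A'\rangle)=1$ it passes to the dual space $M^*\simeq\Bbb{P}^5$, where $\langle A\rangle^*$ and $\langle A'\rangle^*$ are two skew lines and every further $3$-span component (not a cone with vertex $P$) gives a line $v_k$ meeting both; analysing whether the family $\{v_k\}$ contains a disjoint pair or passes through a fixed point is what produces ``at most one extra vertex $Q$, and $Q\in l=\langle A\rangle\cap\langle A'\rangle$'' and separates cases $ii)$ and $iii)$. Some replacement for this step is indispensable; without it your ``backbone dichotomy'' is unproved.

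A second, smaller but still real, omission is in the cone-plus-plane branch (the source of cases $iv)$, $v)$, $vi)$). You say the auxiliary plane $\pi$ ``emerges as a common incidence locus'', but the actual mechanism in the paper is a concrete linear-algebra fact you would need to state and use: if a further component $B$ has $3$-dimensional span different from $\langle A\rangle$, then any third $3$-span component not inside $N:=\langle A\cup B\rangle$ has its span meeting both $\langle A\rangle$ and $\langle B\rangle$ in planes, and three $3$-spaces spanning a $\Bbb{P}^5$ which pairwise meet along planes must contain one common plane $\pi$. The subsequent split into $iv)$, $v)$, $vi)$ is then governed by how $N$ meets the plane $A'$ (only at $P$, or along a line with $P\notin\pi$, or along a line with $P\in\pi$, or no such $B$ at all), a case division your outline does not identify. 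Finally, ``all described cases can occur'' needs explicit models (the paper exhibits them, e.g.\ the two rulings of a smooth quadric in $M^*$ for case $ii)$); a remark that one ``would produce explicit models'' does not discharge that part of the statement.
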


\begin{proof}
Let us put $V_{\overline{i}}=A$ and $V_{\overline{j}}=A^{\prime }.$ By
Proposition \ref{primo passo} we know that we have to consider three cases.
Recall that we have to check that $\dim ([V_{i};V_{j}])\leq 4$ for any $i,j.$

\textbf{Case 1.} $A$ and $A^{\prime }$ are cones with the same vertex $P,$ $%
\dim (\left\langle A\right\rangle \cup \left\langle A^{\prime }\right\rangle
)=6$ and $\left\langle A\right\rangle \cap \left\langle A^{\prime
}\right\rangle =P.$

Let us consider another components $V_{k}$ such that $\dim (\left\langle
V_{k}\right\rangle )=3;$ by Proposition \ref{primo passo} either $V_{k}$ is
a cone with vertex $P$ or $\dim (\left\langle A\right\rangle \cap
\left\langle V_{k}\right\rangle )=\dim (\left\langle A^{\prime
}\right\rangle \cup \left\langle V_{k}\right\rangle )=2,$ but this is not
possible. Let us consider another components $V_{k}$ which is a plane; by
Proposition \ref{primo passo} $V_{k}$ passes through $P$ or $\dim
(\left\langle A\right\rangle \cap \left\langle V_{k}\right\rangle )=\dim
(\left\langle A^{\prime }\right\rangle \cup \left\langle V_{k}\right\rangle
)\geq 1,$ but this is not possible unless $V_{k}$ passes through $P,$ hence
we are in case $i).$

\textbf{Case 2.} There are no pairs as in Case 1, $A$ and $A^{\prime }$ are
cones with the same vertex $P,$ $\dim (\left\langle A\right\rangle \cup
\left\langle A^{\prime }\right\rangle :=M)=5$ and $\left\langle
A\right\rangle \cap \left\langle A^{\prime }\right\rangle $ is a line $l$
passing through $P.$

Let us consider another component $V_{k}$ such that $\dim (\left\langle
V_{k}\right\rangle )=3$ (if any); either $V_{k}$ is a cone of vertex $P$ or $%
l\subset \left\langle V_{k}\right\rangle \subset M$ because $\dim
(\left\langle A\right\rangle \cap \left\langle V_{k}\right\rangle )=\dim
(\left\langle A^{\prime }\right\rangle \cap \left\langle V_{k}\right\rangle
)=2.$ Let us consider the dual space $M^{*}\simeq \Bbb{P}^{5}.$ In $M^{*}$
there are two disjoint lines $a:=\left\langle A\right\rangle ^{*}$ and $%
a^{\prime }:=\left\langle A^{\prime }\right\rangle ^{*}$generating $%
l^{*}\simeq \Bbb{P}^{3}.$ Every surface $V_{k}$ (not a cone with vertex at $%
P $) gives rise to another line $v_{k}=\left\langle V_{k}\right\rangle
^{*}\subset l^{*}$ intersecting both $a$ and $a^{\prime }$ and these lines $%
\{v_{k}\}$ cannot intersect each other out of the skew lines $a$ and $%
a^{\prime }.$ Note that any pair of disjoint lines $v_{k},v_{k^{\prime }}$
(if any) corresponds to a pair of cones in $M$ having the same vertex,
(because $\left\langle V_{k}\right\rangle \cap \left\langle V_{k^{\prime
}}\right\rangle $ is a line, $\dim ([V_{k};V_{k^{\prime }}])\leq 4$ and we
apply Proposition \ref{primo passo}). This vertex is different from $P$ and
we can have at most one vertex of this type, in spite of the number of
disjoint lines $\{v_{k}\}.$ Let us consider the following two possibilities:

$(a)$ there is at least a pair of disjoint lines $\{v_{k}\}$;

$(b)$ all lines $\{v_{k}\}$ pass through a fixed point $R$ of $a$ (or $%
a^{\prime })$.

In case $(a)$ the pair of disjoint lines determines a pair of cones in $M$
having the same vertex $Q\in l.$ The other lines $v_{k}$, if any, correspond
to cones in $M$ having vertex at $Q$ and cutting $\left\langle
A\right\rangle $ and $\left\langle A^{\prime }\right\rangle $ along a plane. 
$V$ can also contain some surface $V_{k}$ such that $\left\langle
V_{k}\right\rangle $ cuts a plane along the linear span of every cone (it is
possible when $\{v_{k}\}$ contains a very few lines) and $V$ can also
contain planes passing through $l$ or intersecting each other, intersecting $%
l$ and cutting a line along any $3$-dimensional linear span $\left\langle
V_{i}\right\rangle $. In general this is not possible, because these planes
give rise to lines in $M^{*}$ intersecting $a,a^{\prime }$ and all the lines 
$v_{k}$, but it is possible when $\{v_{k}\}$ contains a very few lines. We
are in case $ii);$ note that this case is possible, for instance if we take
a smooth quadric in $M^{*}$ and we take the lines of the two different
rulings: each ruling corresponds to a vertex.

In case $(b),$ $\left\langle V_{k}\right\rangle \cap \left\langle A^{\prime
}\right\rangle $ is a fixed plane containing $l,$ while $\left\langle
V_{k}\right\rangle \cap \left\langle A\right\rangle $ is a plane containing $%
l$ and depending on $k.$ As above, $V$ can also contain planes passing
through $l$ or intersecting each other, intersecting $l$ and cutting a line
along any $3$-dimensional linear span $\left\langle V_{i}\right\rangle $,
(if the planes pass through $P$ the condition holds only for surfaces which
are not cones). We are in case $iii);$ note that this case is possible, for
instance when $\{v_{k}\}$ contains only one line.

To conclude Case 2, note that even if $V$ does not contain surfaces $V_{k}$
(not cones with vertex $P)$ the above discussion shows that we are in case $%
iii)$ too.

\textbf{Case 3.} There are no pairs as in Cases 1 and 2, $A$ is a cone with
vertex $P,$ $A^{\prime }$ is a plane passing through $P$, $\dim
(\left\langle A\right\rangle \cup A^{\prime })=5$ and $\left\langle
A\right\rangle \cap A^{\prime }$ $=P.$ In Case 3 any other surface $B,$
component of $V,$ having a $3$-dimensional span $\left\langle B\right\rangle
,$ is such that $\dim (\left\langle A\right\rangle \cap \left\langle
B\right\rangle )\geq 2,$ otherwise we would get Cases 1 or 2 again.

$(a)$ Let us assume that $V$ contains another component $B$ such that $\dim
\left\langle B\right\rangle =3$ and $\left\langle B\right\rangle \neq
\left\langle A\right\rangle .$ Let us call $\pi :=\left\langle
A\right\rangle \cap \left\langle B\right\rangle $ the common plane and let
us call $N:=\left\langle A\cup B\right\rangle \simeq \Bbb{P}^{4}$. Let us
assume that $N\cap A^{\prime }=P.$

In case $(a)$ every component of $V$ contained in $N,$ having a $3$
-dimensional span as $B,$ is a cone of vertex $P$. Hence $P\in \pi ,$
otherwise $\left\langle A\right\rangle =\left\langle B\right\rangle .$ Let $%
V_{k}$ be a component of $V,$ having a $3$-dimensional span, but not
contained in $N.$ We claim that $\left\langle V_{k}\right\rangle \cap N=\pi $
. Hence $\pi \subset \left\langle V_{k}\right\rangle $. In fact, we know
that $\left\langle V_{k}\right\rangle \cap $ $\left\langle A\right\rangle $
and $\left\langle V_{k}\right\rangle \cap $ $\left\langle B\right\rangle $
are planes (if not we would get Cases 1 or 2), hence $\left\langle
V_{k}\right\rangle \cup N\simeq \Bbb{P}^{5}$ and three $3$-dimensional
linear spaces spanning $\Bbb{P}^{5}$, pairwise intersecting along a plane
must contain the same plane, in our case the plane $\pi .$

If $\left\langle V_{k}\right\rangle \cap A^{\prime }=P$ then $V_{k}$ is a
cone of vertex $P,$ if $\left\langle V_{k}\right\rangle \cap A^{\prime }$ is
a line, $V_{k}$ can be any surface having a $3$-dimensional span. $V$ can
contain planes intersecting each other, intersecting $A^{\prime }$, cutting
a line along any $3$-dimensional linear span $\left\langle
V_{i}\right\rangle $, (if the planes pass through $P$ the condition holds
only for surfaces which are not cones). We are in case $i)$ or $iv);$ note
that case $iv)$ is possible: take a pair of planes $A^{\prime }$ and $\pi $
intersecting at a point $P,$ take cones of vertex $P$ whose spans contain $%
\pi $, surfaces whose span contains $\pi $ and a line of $A^{\prime }$
passing through $P,$ planes spanned by lines on $\pi $ and points on $%
A^{\prime }.$

$(b)$ Let us assume that $V$ contains another component $B$ such that $\dim
\left\langle B\right\rangle =3$ and $\left\langle B\right\rangle \neq
\left\langle A\right\rangle .$ Let us call $\pi :=$ $\left\langle
A\right\rangle \cap \left\langle B\right\rangle $ and let us call $%
N:=\left\langle A\cup B\right\rangle \simeq \Bbb{P}^{4}$. Let us assume that 
$N\cap A^{\prime }=l$ is a line and $P\notin \pi $, (it is not possible $%
N\cap A^{\prime }=A^{\prime }$ because $\dim (\left\langle A\right\rangle
\cup A^{\prime })=5$)$.$ Note that $l\cap \pi =\emptyset ,$ otherwise $A\cap
A^{\prime }$ would contain $P$ and $l\cap \pi :$ contradiction. Hence $%
N=\left\langle \pi \cup l\right\rangle $ and $\pi \cap A^{\prime }=\emptyset
.$

By arguing as in case $(a),$ we have every component of $V$ contained in $N,$
and having a $3$-dimensional span as $B,$ is a cone of vertex belonging to $%
l $. Every component $V_{k}$ of $V,$ having a $3$-dimensional span, but not
contained in $N,$ is such that $\pi \subset \left\langle V_{k}\right\rangle
. $ It follows that $\left\langle V_{k}\right\rangle $ $\cap A^{\prime }$
cannot be a line, otherwise this line would cut a point on $\pi $ and $\pi
\cap A^{\prime }$ would be not empty$.$ Therefore $V_{k}$ is a cone having
vertex on $A^{\prime }.$ $V$ can contain planes intersecting $A^{\prime }$,
cutting a line along any $3$-dimensional linear span of the other components
of $V$ (or passing through all the vertices of involved cones: it may
happens, for instance if all the vertices belong to $l$) and intersecting
each other. We are in case $v);$ note that this case can occur: take a pair
of disjoint planes $A^{\prime }$ and $\pi ,$ take cones having vertices on $%
A^{\prime }$ and whose span contain $\pi ,$ take planes spanned by a line on 
$\pi $ and a point on $A^{\prime }.$

$(c)$ Let us assume that $V$ contains another component $B$ such that $\dim
\left\langle B\right\rangle =3$ and $\left\langle B\right\rangle \neq
\left\langle A\right\rangle .$ Let us call $\pi :=\left\langle
A\right\rangle \cap \left\langle B\right\rangle $ and let us call $%
N:=\left\langle A\cup B\right\rangle \simeq \Bbb{P}^{4}$. Let us assume that 
$N\cap A^{\prime }=l$ is a line and $P\in \pi .$ In this case $P\in l$
otherwise $A^{\prime }=\left\langle P\cup l\right\rangle $ and $A^{\prime
}\subset N:$ contradiction.

The only difference with case $(b)$ is that now $V$ can also contain any
surface whose span is $\left\langle \pi \cup l\right\rangle .$ We are in
case $v)$ too.

$(d)$ Let us assume that $V$ does not contain another component $B$ such
that $\dim (\left\langle B\right\rangle )=3$ and $\left\langle
B\right\rangle \neq \left\langle A\right\rangle .$ We are in case $vi),$ and
obviously it can occur.
\end{proof}

From now on we can assume that, if $V=V_{1}\cup ...\cup V_{r}$, $r\geq 3,$
is a $J$-embeddable, reducible surface in $\Bbb{P}^{n}$, i.e. $\dim
[Sec(V)]\leq 4,$ then $\dim (\left\langle V_{i}\right\rangle )\leq 3$ for $%
i=1,...,r$ and $\dim (\left\langle V_{i}\cup V_{j}\right\rangle )\leq 4$ for
any $i,j=1,...,r.$

To complete the classification we prove the following theorem.

\begin{theorem}
\label{teosolo3} Let $V=V_{1}\cup ...\cup V_{r}$, $r\geq 3,$ be a reducible
surface in $\Bbb{P}^{n}$ such that $\dim [Sec(V)]\leq 4$ and $\dim
(\left\langle V\right\rangle )\geq 5.$ Assume that $\dim (\left\langle
V_{i}\right\rangle )\leq 3$ for $i=1,...,r$ and $\dim (\left\langle
V_{i}\cup V_{j}\right\rangle )\leq 4$ for any $i,j=1,...,r.$ Then either $V$
is an union of planes pairwise intersecting at least at a point or the
following conditions hold: $V_{1}\cup ...\cup V_{t}\cup ...\cup V_{r}$ with $%
1\leq t\leq r$ such that

$i)$ $\dim (\left\langle V_{i}\right\rangle )=3$ for any $1\leq i\leq t$ and 
$V_{i}$ is a plane for $t+1\leq i\leq r$ (if any);

$ii)$ $2\leq \dim (\left\langle V_{i}\right\rangle \cap \left\langle
V_{j}\right\rangle )$ for any $i,j=1,...,t;$ $1\leq \dim (\left\langle
V_{i}\right\rangle \cap V_{j})$ for any $i=1,...,t$ and $j=t+1,...,r;$ $%
0\leq \dim (V_{i}\cap V_{j})$ for any $i,j=t+1,...,r.$

Let $V=V_{1}\cup ...\cup V_{r}$, $r\geq 3,$ be a reducible surface in $\Bbb{P%
}^{n}$ such that $\dim (\left\langle V\right\rangle )\geq 5.$ Assume that $%
\dim (\left\langle V_{i}\right\rangle )\leq 3$ for $i=1,...,r$ and that $V$
is either an union of planes, pairwise intersecting at least at a point, or $%
V_{1}\cup ...\cup V_{t}\cup ...\cup V_{r},$ with $1\leq t\leq r,$ satisfying
conditions $i),ii)$ above. Then $\dim [Sec(V)]\leq 4.$
\end{theorem}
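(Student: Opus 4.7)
The plan is to reduce both implications to the Grassmann formula
$\dim(\langle V_i\rangle\cap\langle V_j\rangle)=\dim\langle V_i\rangle+\dim\langle V_j\rangle-\dim\langle V_i\cup V_j\rangle$
combined with Corollary~\ref{corovvio}, which equates $\dim[Sec(V)]\leq 4$ with $\dim([V_i;V_j])\leq 4$ for all (possibly equal) pairs $(i,j)$.

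For the forward direction, each $V_i$ is non-degenerate in its own linear span, so the hypothesis $\dim\langle V_i\rangle\leq 3$ leaves only two possibilities: $V_i$ is a plane, or $\dim\langle V_i\rangle=3$. After relabelling I may assume $\dim\langle V_i\rangle=3$ for $i=1,\dots,t$ and $V_i$ is a plane for $i=t+1,\dots,r$, with $0\leq t\leq r$. If $t=0$ every component is a plane, and the hypothesis $\dim\langle V_i\cup V_j\rangle\leq 4$ combined with Grassmann gives $\dim(V_i\cap V_j)\geq 2+2-4=0$, so the planes pairwise meet at least at a point, which is the first alternative. If $t\geq 1$, applying Grassmann in each of the three regimes turns $\dim\langle V_i\cup V_j\rangle\leq 4$ into exactly the three bounds of condition $ii)$: $\geq 3+3-4=2$ for $i,j\leq t$; $\geq 3+2-4=1$ for $i\leq t<j$; $\geq 2+2-4=0$ for $i,j>t$. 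Condition $i)$ is nothing but the labelling.

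For the converse, by Corollary~\ref{corovvio} it suffices to show $\dim([V_i;V_j])\leq 4$ for every pair, including $i=j$. The diagonal case is immediate since $Sec(V_i)\subseteq\langle V_i\rangle$ and $\dim\langle V_i\rangle\leq 3$. For $i\neq j$ one has $[V_i;V_j]\subseteq\langle V_i\cup V_j\rangle$, so it is enough to verify $\dim\langle V_i\cup V_j\rangle\leq 4$. In the pairwise-intersecting-planes alternative this is $\dim\langle V_i\cup V_j\rangle\leq 2+2-0=4$; in the structured alternative, each of the three intersection lower bounds of condition $ii)$ translates, again via Grassmann, into the required $\dim\langle V_i\cup V_j\rangle\leq 4$ in the corresponding regime.

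The statement is essentially a dictionary between joins, spans, and linear intersections, so no substantial obstacle is expected. The only care needed is to split the analysis into the three pair types in both directions and to remember the diagonal $i=j$ case in the converse; both are routine once the two tools above are in place.
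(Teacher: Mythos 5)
Your proposal is correct and follows essentially the same route as the paper: both directions reduce, via Corollary~\ref{corovvio}, to the pairwise condition $\dim(\left\langle V_{i}\cup V_{j}\right\rangle )\leq 4$, which the Grassmann formula translates into exactly the intersection bounds of condition $ii)$. Your justification of the converse via the trivial containment $[V_{i};V_{j}]\subseteq \left\langle V_{i}\cup V_{j}\right\rangle$ (plus the diagonal case $Sec(V_{i})\subseteq \left\langle V_{i}\right\rangle$) is in fact cleaner than the paper's appeal to Lemma~\ref{lemJ2}, whose hypotheses exclude planes, but the substance of the argument is the same.
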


\begin{proof}
If $V$ is an union of planes, obviously every pair of planes must intersect
to have $\dim [Sec(V)]\leq 4.$ If not, $V$ is as in $i).$ $ii)$ follows from
the fact that, for any pair $V_{i},V_{j}\in V,$ $\dim (\left\langle
V_{i}\cup V_{j}\right\rangle )=\dim (\left\langle V_{i}\right\rangle \cup
\left\langle V_{j}\right\rangle )\leq 4.$

Conversely: if $V$ is an union of planes intersecting pairwise at least at a
point obviously $\dim ([V_{i};V_{j}])\leq 4$ for any $i,j=1,...,r$. Hence $%
\dim [Sec(V)]\leq 4.$ If $V$ is as in $i)$, condition $ii)$ implies that $%
\dim (\left\langle V_{i}\right\rangle \cup \left\langle V_{j}\right\rangle
)=\dim (\left\langle V_{i}\cup V_{j}\right\rangle )\leq 4$ for any $%
i,j=1,...,r$ . Hence $\dim ([V_{i};V_{j}])\leq 4$ by Lemma \ref{lemJ2}; in
any case $\dim [Sec(V)]\leq 4.$
\end{proof}

\begin{remark}
\label{remes1} Example 1 is a $J$-embeddable surface $V$ considered by
Theorem \ref{teosolo3}.
\end{remark}

\end{document}